\documentclass[11pt, a4paper, reqno]{amsart}

\usepackage{amscd}
\usepackage{dsfont}
\usepackage{amsmath, upgreek}
\usepackage{amstext}
\usepackage{amsthm}
\usepackage{amssymb}
\usepackage{amsopn}
\usepackage{amssymb}
\usepackage{amsxtra}
\usepackage{amsfonts}
\usepackage{fancyhdr}
\usepackage{paralist, epsfig}
\usepackage[mathcal]{euscript}
\usepackage[english]{babel}
\usepackage[latin1]{inputenc}
\usepackage{bbold}
\usepackage{setspace}
\usepackage[svgnames]{xcolor}
\usepackage{mathtools}
\usepackage{array, tabu}

\usepackage{tikz}
\usepackage{tikz-cd}
\usepackage{tkz-euclide}
\usepackage{euscript}
\usepackage{hyperref}
\hypersetup{
    colorlinks = true,
    linkcolor = RoyalBlue,
    citecolor = Green,    
}

\newcommand{\qcomp}[1]{\overbracket[0.6pt][1.5pt]{#1\hspace{1pt}}}
\newcommand{\hc}{\operatorname{hc}\nolimits}

\renewcommand{\emptyset}{\varnothing}
\newcommand{\T}{\operatorname{\scriptscriptstyle\mathsf T}}

\makeatletter
\g@addto@macro\normalsize{%
  \setlength\abovedisplayskip{10pt}
  \setlength\belowdisplayskip{10pt}
  \setlength\abovedisplayshortskip{5pt}
  \setlength\belowdisplayshortskip{8pt}
}

\newtheorem{thm}{Theorem}[section]
\newtheorem{cor}[thm]{Corollary}
\newtheorem{lem}[thm]{Lemma}
\newtheorem{prop}[thm]{Proposition}

\theoremstyle{definition}
\newtheorem{ex}[thm]{Example}
\newtheorem{rmk}[thm]{Remark}
\newtheorem{dfn}[thm]{Definition}

\renewcommand{\epsilon}{\varepsilon}

\newcommand{\Hom}{\operatorname{Hom}\nolimits}

\newcommand{\inc}{\operatorname{inc}\nolimits}
\newcommand{\cone}{\operatorname{cone}\nolimits}

\newcommand{\hMon}{\operatorname{hMon}\nolimits}
\newcommand{\hEpi}{\operatorname{hEpi}\nolimits}

\newcommand{\hMork}{\ensuremath{\operatorname{hMor}_{\operatorname{ker}}\nolimits}}
\newcommand{\hMorc}{\ensuremath{\operatorname{hMor}_{\operatorname{cok}}\nolimits}}

\newcommand{\spann}{\operatorname{span}\nolimits}

\newcommand{\id}{\operatorname{id}\nolimits}

\newcommand{\Bigprod}[2]{\ensuremath{\mathop{\textstyle\prod}_{#1}^{#2}}}
\newcommand{\ran}{\operatorname{ran}\nolimits}
\newcommand{\cok}{\operatorname{cok}\nolimits}
\newcommand{\coim}{\operatorname{coim}\nolimits}
\newcommand{\im}{\operatorname{im}\nolimits}

\newcommand{\epi}{\rightarrowtriangle}

\newcommand{\Der}{\ensuremath{\operatorname{\mathsf{D}}}}
\newcommand{\K}{\ensuremath{\operatorname{\mathsf{K}}}}
\newcommand{\C}{\ensuremath{\operatorname{\mathsf{Ch}}}}
\newcommand{\Ac}{\ensuremath{\operatorname{\mathsf{Ac}}}}
\newcommand{\bb}{\ensuremath{\operatorname{\mathsf{b}}}}

\newcommand{\LL}{\ensuremath{\operatorname{L}}}

\renewcommand{\mod}{\operatorname{mod}}
\newcommand{\NN}{\mathbb{N}}

\newcommand{\CC}{\mathbb{C}}
\newcommand{\DD}{\mathbb{D}}

\newcommand{\KK}{\mathbb{K}}
\newcommand{\EE}{\mathbb{E}}
\newcommand{\ZZ}{\mathbb{Z}}

\newcommand{\II}{\mathbb{I}}

\newcommand{\diam}{\hfill\raisebox{-8pt}{\scalebox{0.9}{\rotatebox{45}{$\qed$}}}}

\renewcommand{\bullet}{\,\begin{picture}(-1,1)(-1,-3)\circle*{2.}\end{picture}\ }

\newcommand{\COM}{\mathsf{COM}}
\newcommand{\HCOM}{\mathsf{HCOM}}

\newcommand{\Ban}{\mathsf{Ban}}
\newcommand{\Hilb}{\mathsf{Hilb}}

\newcommand{\LCS}{\mathsf{LCS}}
\newcommand{\Bor}{\mathsf{Born}}
\newcommand{\CBor}{\mathsf{CBorn}}
\newcommand{\SBor}{\mathsf{SBorn}}
\newcommand{\BMod}{\mathsf{BMod}}

\newcommand{\HDLCS}{\mathsf{LCS}_{\mathsf{HD}}}

\newcommand{\FreDN}{\mathsf{Fre}_{\mathsf{(DN)}}}

\newcommand{\Fre}{\mathsf{Fre}}

\newcommand{\hypref}[1]{\hyperref[R0]{\color{RoyalBlue}{\bf #1}}}

\usepackage{xstring}

\newcommand\zfrac[2]{\text{\footnotesize\raisebox{.15ex}{%
\dimen0=\fontdimen8\textfont2  
\dimen2=\fontdimen11\textfont2 
\dimen4=\fontdimen8\textfont3  
$%
\fontdimen8\textfont2=.5\dimen0
\fontdimen11\textfont2=.5\dimen2
\fontdimen8\textfont3=1.1\dimen4
\dfrac{#1}{#2}$%
\fontdimen8\textfont2=\dimen0
\fontdimen11\textfont2=\dimen2
\fontdimen8\textfont3=\dimen4
}}}

\renewcommand*\textcircled[1]{\tikz[baseline=(char.base)]{
            \node[shape=circle,draw,inner sep=1pt] (char) {#1};}}

	\usetikzlibrary{matrix,arrows,decorations,decorations.pathmorphing,positioning,decorations.pathreplacing,shapes}
	\tikzset{commutative diagrams/.cd, 
		mysymbol/.style = {start anchor=center, end anchor = center, draw = none}}

\newcommand{\commutes}[2][\circ]{\arrow[mysymbol]{#2}[description]{#1}}

\DeclareMathSymbol{\sm}{\mathbin}{AMSa}{"39}
\DeclareMathSymbol{\shortminus}{\mathbin}{AMSa}{"39}

\DeclareMathSymbol{\shortminus}{\mathbin}{AMSa}{"39}

\usetikzlibrary{arrows.meta}

\usepackage{enumitem}

\newlist{myitemize}{itemize}{1}
\setlist[myitemize,1]{leftmargin=26pt, noitemsep, topsep=0pt}

\renewcommand{\tau}{\uptau}
\renewcommand{\sigma}{\upsigma}
\renewcommand{\phi}{\upphi}
\renewcommand{\psi}{\uppsi}
\renewcommand{\alpha}{\upalpha}
\renewcommand{\beta}{\upbeta}
\renewcommand{\gamma}{\upgamma}
\renewcommand{\delta}{\updelta}
\renewcommand{\mu}{\upmu}
\renewcommand{\nu}{\upnu}
\renewcommand{\rho}{\uprho}
\renewcommand{\xi}{\upxi}
\renewcommand{\zeta}{\upzeta}
\renewcommand{\kappa}{\upkappa}

\usepackage{stmaryrd}
\usepackage[OT1]{fontenc}

\usepackage{pifont}

\begin{document}

\allowdisplaybreaks
$ $
\vspace{-40pt}

\title{Homological surrogates in topological\\[2pt]and bornological analysis}

\author{\phantom{xx}Marianne M.\ Lawson\hspace{0.5pt}\MakeLowercase{$^{\text{1},\,\text{2}}$} and\; Sven A.\ Wegner\hspace{0.5pt}\MakeLowercase{$^{\text{1},\,\text{2},\,\text{3}}$}}

\renewcommand{\thefootnote}{}
\hspace{-1000pt}\footnote{\hspace{5.5pt}2020 \emph{Mathematics Subject Classification}: Primary 46M18, 18G80; Secondary 18E35, 46M03, 46A17.\vspace{1.6pt}}


\hspace{-1000pt}\footnote{\hspace{5.5pt}\emph{Key words and phrases}: karoubian category, (one-sided) exact category, abelianization, derived equivalence,\newline\phantom{x}\hspace{1.2pt}quasi-complete spaces, sequentially complete spaces, locally complete spaces, bornological modules. \vspace{1.6pt}}

\hspace{-1000pt}\footnote{\hspace{0pt}$^{1}$\,University of Hamburg, Department of Mathematics, Bundesstra\ss{}e 55, 20146 Hamburg, Germany,\newline\phantom{x}\hspace{1.2pt}e-mail: $\{$marianne.lawson,\;sven.wegner$\}$@uni-hamburg.de.\vspace{1.6pt}}

\hspace{-1000pt}\footnote{\hspace{0pt}$^{2}$\,This research was funded by the German Research Foundation\,(DFG); Project no.~507660524.\vspace{1.6pt}}

\hspace{-1000pt}\footnote{\hspace{0pt}$^{3}$\,Corresponding author.}

\begin{abstract}We describe hearts of strongly deflation-exact categories with kernels as localizations of categories of three-term complexes up to homotopy, without assuming admissibility of kernels. We construct a calculus of fractions and give explicit formulas for kernels and cokernels. For complete locally convex spaces, whose failure to be quasiabelian was established by Prosmans (2000), our description and its dual provide derived equivalent abelian models through two choices of a notion of exactness. For quasi-complete, sequentially complete and locally (a.k.a.\ Mackey) complete spaces, we show that the topologically exact sequences form the maximal deflation-exact structure, although these categories are not extension-closed in the category of locally convex Hausdorff spaces. Our results make them nevertheless accessible to homological algebra with respect to the induced notion of exactness. Further applications concern the quasiabelian category of bornological modules equipped with the non-maximal linear split exact structure.
\end{abstract}


\maketitle

\vspace{-15pt}
\section{Introduction}\label{SEC-1}

Homological methods are a ubiquitous tool in algebra, geometry and analysis. In all three areas this may involve dealing with non-abelian categories; in particular this is inevitable when categories of topological or bornological vector spaces or modules appear. The fundamental concept which then comes into play is that of a Quillen exact category \cite{Quillen72} in which one declares a class $\CC$ of kernel-cokernel pairs in an additive category $\mathcal{A}$ as the short exact sequences. Provided that $\CC$ enjoys certain axioms and $\mathcal{A}$ satisfies some mild assumptions, it turns out that one can do homological algebra relative to $\CC$. This includes the definition of the derived category, see Neeman \cite{Nee90}.

\smallskip

Given a category $\mathcal{A}$ in which every morphism has a kernel as well as a cokernel and the class $\CC_{\operatorname{all},\,\mathcal{A}}$ of all kernel-cokernel pairs satisfies the aforementioned axioms, the category $\mathcal{A}\equiv(\mathcal{A},\CC_{\operatorname{all},\,\mathcal{A}})$ is called quasiabelian. It was shown by Schneiders \cite{S} that in this case the derived category $\Der(\mathcal{A})$ admits two natural t-structures whose hearts are abelian categories, referred to as the left and right heart, respectively. Schneiders also showed that there are natural embeddings $\mathcal{A}\rightarrow\mathcal{LH}(\mathcal{A})$ and $\mathcal{A}\rightarrow\mathcal{RH}(\mathcal{A})$ which lift to derived equivalences, and that the objects in the left heart can be represented by monomorphisms of the category $\mathcal{A}$. By direct construction this category had been defined even earlier by Waelbroeck \cite{W, WF, Wbor} for Banach, Fr\'echet and bornological vector spaces, see also Wegner \cite{Wegner17}.

\smallskip

Henrard et al.\ \cite{HKRW} generalized Schneiders' results by showing that if $\mathcal{A}$ has kernels, but need not have cokernels, and $\CC$ is a one-sided so-called strongly deflation-exact structure as considered by Rump \cite{RumpMax} and Bazzoni, Crivei \cite{BC13}, then the left heart exists and is derived equivalent to $(\mathcal{A},\CC)$. If $\mathcal{A}$ additionally has admissible kernels (in the sense that $(f,\cok f)\in\CC$ for every kernel $f$), then also the representation of the left heart via $\mathcal{A}$-monomorphisms holds. Dual statements are of course valid as well. The term `homological surrogate' in the title serves as an umbrella term for the situation just sketched and indicates that given a possibly one-sided exact category $(\mathcal{A},\CC)$, we search for a preferably abelian category $\mathcal{B}$ such that $\Der^{\bb}(\mathcal{A},\CC)\cong\Der^{\bb}(\mathcal{B})$ holds in a natural way. We will see that given $\mathcal{A}$ there might be several natural choices for $\CC$ leading to different surrogates. Related notions are homological embeddings, see Psaroudakis \cite{Psaroudakis}, Marks \cite{Marks17}, or Meyer's isocohomological morphisms \cite{Meyer3}.

\smallskip

In the first part of this article we focus on strongly deflation-exact categories in which every morphism has a kernel. We do \emph{not} assume the existence of cokernels for any morphism and we also do \emph{not} assume that all kernels are inflations. In this case we explicitly describe the left heart 
$$
\mathcal{LH}(\mathcal{A},\CC)\cong\hMork(\mathcal{A})[\mathcal{S}_{\CC}^{-1}]
$$
as the localization of $3$-term complexes $\ker f\rightarrow X\xrightarrow{\hspace{1pt}\scriptscriptstyle f\hspace{1pt}}Y$ with respect to an explicitly given multiplicative system $\mathcal{S}_{\CC}$. In our main Theorems \ref{NEW-LH} and \ref{KER-COK-PROP} we establish the multiplicative system axioms (MS1)\,--\,(MS3) directly, in particular cancellation and extension of fractions, and provide formulae for kernels and cokernels in $\mathcal{LH}(\mathcal{A},\CC)$\,---\,this makes $\mathcal{LH}(\mathcal{A},\CC)$ accessible to concrete computations. 
\smallskip

In the second part of the article we study concrete applications of the above results and their dual statements to the following categories appearing in the realm of functional analysis:\vspace{3pt}

\begin{myitemize}

\item[1.] $\COM$\,---\,the category of complete locally convex Hausdorff spaces was previously investigated by Prosmans \cite{Prosmans} who showed that it is not quasiabelian. We study two different notions of exactness: The maximal inflation exact structure has admissible cokernels and admits the right heart given by $\mathcal{RH}(\mathcal{A},\CC)\cong\hEpi(\mathcal{A})[\mathcal{S}_{\CC}^{-1}]$ as a surrogate. While this fact was mentioned already in \cite{HKRW}, we add an interpretation of the objects of the right heart as `formal co-quotients' and work out that the price to pay for the admissible cokernels is that deflations for the maximal inflation-exact structure need not to be surjective, see Remark \ref{COMRMK}. On the other hand, the maximal deflation exact structure on $\COM$ fails to have admissible kernels, but it is two-sided exact and consists precisely of the topologically exact sequences, see Theorem \ref{COM-THM}.

\vspace{3pt}

\item[2.] $\HCOM$\,---\,the category of hypo-complete locally convex Hausdorff spaces, where hypo-complete stands for either quasi-, sequentially or locally (a.k.a.\ Mackey) complete, appears naturally, e.g., in Lie theory, convenient calculus, semigroup theory or vector-valued holomorphy. All three categories fail to be extension closed in the category $\HDLCS$ of locally convex Hausdorff spaces endowed with the  topologically exact sequences \cite{DS12}. We show in Theorem \ref{HCOM-DEX} that nevertheless on all three categories the maximal deflation-exact structure consists precisely of the topologically exact sequences which yields an abelian surrogate as well as an exact embedding into $\HDLCS$.

\vspace{3pt}

\item[3.] $\FreDN$\,---\,the category of Fr\'echet spaces satisfying Vogt's \cite{MV} condition (DN) is a natural example of a category in which not all cokernels exist but which satisfies the assumptions of Theorem \ref{NEW-LH} when endowed with the maximal deflation-exact structure.

\vspace{3pt}

\item[4.] $\BMod(A)$\,---\,the category of complete convex bornological left modules over a unital bornological $\KK$-algebra $A$ is a quasiabelian category. We will however endow it with the linearly split exact structure as proposed by Meyer \cite{Meyer3} in the context of Hochschild cohomology. This example illustrates that our main results from Section \ref{SEC-LRH} accommodate a deliberately chosen non-maximal exact structure and provide an abelian surrogate accessible to explicit computations.

\end{myitemize} 

\smallskip

The Appendix gives a streamlined construction of the canonical left t-structure on the derived category of a strongly deflation-exact category with kernels. The argument uses a notion of acyclicity in a single degree adapted to this setting, whose relation to the convention used in \cite{HKRW} is also discussed.
\smallskip

A very prominent area that generates homological surrogates is tilting theory, see Sauter \cite[Chapter 7]{Sauter} for an overview focusing on exact categories. Let us mention in particular that Lawson, Letz, Sauter \cite{LLS26} recently gave results on surrogates, where two-sided exact categories were treated but the assumption of the existence of (co)kernels was relaxed, and that they indicated possible applications to complete LB-spaces.

\smallskip

For unexplained notation from category theory and functional analysis we refer to \cite{ML, Riehl} and \cite{MV, Jarchow, BPC, ProsmansSchneiders}, respectively. In particular, we refer to \cite{GZ, Krause, Mili,  Neeman, Verdier} for localization of (triangulated) categories. We fix $\KK$ to denote the field of real or complex numbers. We will use the word `map' synonymously for `morphism'; that includes cases where morphisms are linear and continuous or bounded maps. We will use the arrows $\rightarrowtail$ and $\twoheadrightarrow$  for in- and deflations while $\hookrightarrow$ and $\rightarrowtriangle$ indicate mono- and epimorphisms, respectively. We warn the reader that also the latter two notions depend on the category in question and that even if morphisms are some kind of maps, epimorphisms need not be surjective.

\smallskip

Since the categories of all topological or bornological vector spaces are not small, the applications presented in Sections \ref{SEC:CLEX}--\ref{SEC:BOR} require attention to size issues. While \cite{HKRW} assume essential smallness throughout, we will see that for our proofs in Section \ref{SEC-LRH} to hold it is sufficient that the bounded derived category of $\mathcal{A}$ is locally small. In several of our applications this is true due to some ad hoc argument. Otherwise one can achieve it by imposing a suitable size restriction. This will be explained in Remark \ref{SIZE}.

\section{Preliminaries}\label{SEC:PRELIM}

Let $\mathcal{A}$ be an additive category. We say that a morphism $f\in\mathcal{A}$ is a \emph{kernel} if there is a morphism $g\in\mathcal{A}$ with $f=\ker g$; \emph{cokernels} are defined dually. A pair $(f,g)$ of morphisms such that $f=\ker g$ and $g=\cok f$ will be referred to as a \emph{kernel-cokernel pair}. We say that $f\colon X\rightarrow Y$ is a \emph{semistable kernel} if it is a kernel and for any morphism $t\colon X\rightarrow T$ in $\mathcal{A}$ the pushout\vspace{-5pt}
 \begin{equation*}
\begin{tikzcd}
X\arrow{r}{f}\arrow{d}[swap]{t}\commutes[\mathrm{PO}]{dr} & Y \arrow{d}{q_Y}\\[4pt]
T \arrow{r}[swap]{q_T} & Q.
\end{tikzcd}
\end{equation*}
of $f$ along $t$ exists and $q_T$ is again a kernel. \emph{Semistable cokernels} are defined dually. A kernel-cokernel pair $(f,g)$ is a \emph{stable kernel-cokernel pair} if $f$ is a semistable kernel and $g$ is a semistable cokernel. If in $\mathcal{A}$ every morphism has a kernel [cokernel] then we say that $\mathcal{A}$ is a category \emph{with kernels} [\emph{with cokernels}]; some parts of the literature call such categories \emph{left} [\emph{right}] \emph{abelian}. A category with kernels and cokernels is \emph{preabelian}. If additionally all kernels and cokernels are semistable, then $\mathcal{A}$ is called \emph{quasiabelian}.

\smallskip

In a quasiabelian category $\mathcal{A}$, homological algebra is possible\,---\,subject to a few caveats\,---\,by treating the class of all kernel-cokernel pairs as short exact sequences. In what follows, we will however focus on categories that are either \emph{not} quasiabelian, or are quasiabelian but we deliberately choose \emph{not} to declare every kernel-cokernel pair as a short exact sequence.

\begin{dfn}\label{DEFLEXCAT}\cite[Dfns 2.7 and 4.6]{HKRW}\,(and \cite{HKR, HR20}) We call a pair $(\mathcal{A},\CC)$, consisting of an additive category $\mathcal{A}$ and a class $\CC$ of kernel-cokernel pairs in $\mathcal{A}$ that is closed under isomorphisms, a \emph{conflation category} . If $(f,g)\in\CC$, then we refer to $f$ as an \emph{inflation} and to $g$ as a \emph{deflation}. A conflation category $(\mathcal{A},\CC)$ \vspace{3pt}
\begin{compactitem}
\item[(i)] is \emph{deflation-exact} if the following three axioms hold.

\vspace{2pt}

\begin{compactitem}
\item[\phantom{x}\color{RoyalBlue}{\textbf{R0}}\label{R0}] For each $X\in\mathcal{A}$ the map $X\rightarrow0$ is a deflation.\vspace{1.5pt}
\item[\phantom{x}\color{RoyalBlue}{\textbf{R1}}\label{R1}] The composition of two deflations is a deflation.\vspace{1.5pt}
\item[\phantom{x}\color{RoyalBlue}{\textbf{R2}}\label{R2}] The pullback of a deflation along any morphism exists and is again a deflation.
\end{compactitem}

\vspace{4pt}

\item[(ii)] is \emph{inflation-exact} if the following three axioms hold.

\vspace{2pt}

\begin{compactitem}
\item[\phantom{x}\color{RoyalBlue}{\textbf{L0}}\label{L0}] For each $X\in\mathcal{A}$ the map $0\rightarrow X$ is an inflation.\vspace{1.5pt}
\item[\phantom{x}\color{RoyalBlue}{\textbf{L1}}\label{L1}] The composition of two inflations is an inflation.\vspace{1.5pt}
\item[\phantom{x}\color{RoyalBlue}{\textbf{L2}}\label{L2}] The pushout of an inflation along any morphism exists and is again an inflation.
\end{compactitem}

\vspace{4pt}

\item[(iii)] is \emph{strongly deflation-exact} [\emph{strongly inflation-exact}] if additionally the corresponding one of the following two axioms holds.

\vspace{3pt}

\begin{compactitem}
\item[\phantom{x}\color{RoyalBlue}{\textbf{R3}}\label{R3}] If $p$ has a kernel and $p\hspace{1pt}i$ is a deflation, then $p$ is a deflation.
\vspace{1.5pt}
\item[\phantom{x}\color{RoyalBlue}{\textbf{L3}}\label{L3}] If $i$ has a cokernel and $p\hspace{1pt}i$ is an inflation, then $i$ is an inflation.
\end{compactitem}

\vspace{4pt}

\item[(iv)] has \emph{admissible kernels} [\emph{admissible cokernels}] if in $\mathcal{A}$ every morphism possesses a kernel [cokernel] and if every kernel [cokernel] is an inflation [a deflation].

\vspace{4pt}

\item[(v)] is \emph{exact} if \hypref{R0}\hspace{2pt}--\hspace{1pt}\hypref{R3} as well as \textcolor{RoyalBlue}{\textbf{L0}}\hspace{1pt}--\hspace{1pt}\textcolor{RoyalBlue}{\textbf{L3}} all hold.\diam{}
\end{compactitem}
\end{dfn}

Note that other authors discussed the same concepts as above with some subtle differences in notation: In \cite{HR19,HR19a,HR24} the axioms \hypref{R1}\hspace{1pt}--\hspace{1pt}\hypref{R3} are defined as above, but our axiom \hypref{R0} is there called \textbf{R0}$^{\boldsymbol{\ast}}$. The definition with \hypref{R0} as above guarantees that all split kernel-cokernel pairs are conflations, cf.~\cite[Prop 2.6]{HR24} and \cite[Rmk 2.9]{HKRW}. In \cite{HR19} the term `right exact' is used synonymously for `deflation-exact', while in \cite{BC13} deflation-exact categories are called `left exact'; their axioms (R0$^*$)\,--\,(R3) correspond to our axioms \hypref{L0}\hspace{1pt}--\hspace{1pt}\hypref{L3}. Rump \cite{RumpMax} also uses the term `left exact' but includes \hypref{R3} in the definition. It is well-known \cite{Keller90} that the axioms \hypref{R0}\hspace{2pt}--\hspace{1pt}\hypref{R2} and \hypref{L2}, or dually \hypref{L0}\hspace{2pt}--\hspace{1pt}\hypref{L2} and \hypref{R2}, suffice to define an exact category in Quillen's sense \cite{Quillen72}.

\begin{rmk}\label{RMK-1}\cite[Rmk 4.9, Prop 4.12 + dual statement]{HKRW} A deflation-exact [inflation-exact] category with admissible kernels [cokernels] is automatically strongly deflation [inflation] exact. In any such category all kernels are inflations and all cokernels are deflations; the conflations are given by all kernel-cokernel pairs.\hfill\diam{}
\end{rmk}

The next result identifies natural (maximal) conflation structures under assumptions weaker than $\mathcal{A}$ being quasiabelian. Let us from now on denote by $\CC_{\text{all},\,\mathcal{A}}$ the class of all kernel-cokernel pairs on any given additive category $\mathcal{A}$. Recall further that $\mathcal{A}$ is \emph{karoubian} if every idempotent in $\mathcal{A}$ has a kernel, or, equivalently, a cokernel. 

\begin{thm}\label{THMDEFMAX}{\rm(}\cite[Thm 3.5]{Crivei}, \cite[Prop 9.2]{HR24}, \cite[Thm 2.5]{Wegner25}{\rm)} Let $\mathcal{A}$ be karoubian.\vspace{2pt}

\begin{compactitem}

\item[(i)] $\EE_{\operatorname{max},\,\mathcal{A}}:=\bigl\{(f,g)\in\CC_{\operatorname{all},\,\mathcal{A}}\:|\:(f,g) \text{ is a stable kernel-cokernel pair}\hspace{0.5pt}\bigr\}$ is the maximal exact structure on $\mathcal{A}$.

\vspace{3pt}

\item[(ii)]  $\DD_{\operatorname{max},\,\mathcal{A}}:=\bigl\{(f,g)\in\CC_{\operatorname{all},\,\mathcal{A}}\:|\:g \text{ is semistable cokernel}\hspace{0.5pt}\bigr\}$ is a strongly deflation-exact structure and the maximal deflation exact structure on $\mathcal{A}$.

\vspace{3pt}

\item[(iii)]  $\II_{\operatorname{max},\,\mathcal{A}}:=\bigl\{(f,g)\in\CC_{\operatorname{all},\,\mathcal{A}}\:|\:f \text{ is semistable kernel}\hspace{0.5pt}\bigr\}$ is a strongly inflation-exact structure and the maximal inflation exact structure on $\mathcal{A}$.\diam{}
\end{compactitem}
\end{thm}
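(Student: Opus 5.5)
Parts (ii) and (iii) are dual, so I will prove (ii) and obtain (iii) by passing to $\mathcal{A}^{\mathrm{op}}$. Part (i) should follow from the other two. Every exact structure is both deflation- and inflation-exact, so by (ii) and (iii) it is contained in $\DD_{\operatorname{max},\,\mathcal{A}}\cap\II_{\operatorname{max},\,\mathcal{A}}=\EE_{\operatorname{max},\,\mathcal{A}}$. It then remains to check that this intersection is itself exact, i.e.\ that \hypref{R2} and \hypref{L2} preserve \emph{both} stability conditions. For this I will show that the pullback of a stable kernel-cokernel pair along any morphism is again stable; the pushout case is dual. Then \hypref{R0}--\hypref{R2} together with \hypref{L2} suffice by the remark after Definition~\ref{DEFLEXCAT}, citing Keller.

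\textbf{Plan for (ii).} Let $\DD=\DD_{\operatorname{max},\,\mathcal{A}}$.

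\hypref{R0}: $X\to 0$ is the cokernel of $\id_X$ and is trivially semistable, since pullbacks along $0\to T$ are products $X\times T\to T$, which are split cokernels.

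\hypref{R2}: Given $(f,g)\in\DD$ with $g\colon Y\to Z$ and $t\colon Z'\to Z$, I form the pullback $g'\colon Y'\to Z'$ of $g$ along $t$; it exists by semistability. Since $g$ is a semistable cokernel, $g'$ is a cokernel. Using the pullback pasting lemma, $g'$ is again semistable. A standard diagram argument shows that $\ker g'$ exists and equals the induced map $X\to Y'$. One also checks $g'=\cok(\ker g')$: $g'$ is a cokernel of some map, hence the cokernel of its own kernel.

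\hypref{R1}: Given deflations $g_1\colon Y\to Z$ and $g_2\colon Z\to W$, I show $g_2g_1$ is a cokernel, namely of $\ker(g_2g_1)$. The kernel exists as the pullback of $\ker g_2$ along $g_1$, which is available because $g_1$ is semistable: pullbacks of arbitrary maps along $g_1$ exist. The universal property is checked by hand, using that $g_1$ is an epimorphism and a cokernel. Semistability of the composite follows by pasting pullbacks.

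\hypref{R3}: Suppose $p$ has a kernel and $pi\in\DD$. I first show that $p$ is the cokernel of $\ker p$; this is a standard argument using that $pi$ is a cokernel. For semistability, note that the pullback of $p$ along $t$ can be constructed from the pullback of $pi$ along $t$ and the map $i$. Here I expect to need karoubianity: the relevant object arises as a direct summand, and one must split an idempotent to obtain it.

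\textbf{Maximality.} If $\CC$ is deflation-exact, every deflation is a cokernel and, by \hypref{R2}, its pullbacks exist and are deflations, hence cokernels. So every deflation is a semistable cokernel and $\CC\subseteq\DD$.

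\textbf{Main obstacle.} I expect the hardest step to be \hypref{R3}, together with the related stability of $\ker g'$ under pullback in (i). In \hypref{R3}, only the existence of $\ker p$ is assumed, so the existence of the pullbacks of $p$ is not automatic. My plan is to realize the pullback of $p$ along $t$ as a retract of the pullback of $pi$ along $t$ and then split the resulting idempotent, which is where the karoubian hypothesis enters. This is the route taken in \cite{Crivei} and \cite{HR24}, and I would follow their argument, or simply cite it since the theorem is quoted from the literature.
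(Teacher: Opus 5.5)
The paper gives no proof of this theorem; it only cites \cite{Crivei}, \cite{HR24} and \cite{Wegner25}. So your outline has to stand on its own. Its architecture is sound: prove (ii), dualize to (iii), and obtain (i) from $\DD_{\operatorname{max},\,\mathcal{A}}\cap\II_{\operatorname{max},\,\mathcal{A}}$. However, one step is missing and one is wrong.

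\textbf{Missing step: \hypref{R1} in (i).} You reduce exactness of $\EE_{\operatorname{max},\,\mathcal{A}}$ to \hypref{R2} and \hypref{L2}. Keller's criterion also needs \hypref{R1} for the intersection, and nothing in your plan covers it. Take $\EE_{\operatorname{max},\,\mathcal{A}}$-deflations $g_1\colon Y\to Z$ and $g_2\colon Z\to W$. Part (ii) only gives that $g_2g_1$ is a semistable cokernel. It does not give that $k=\ker(g_2g_1)\colon K\to Y$ is a semistable kernel.

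You can repair this using (iii). Let $f_i=\ker g_i$, let $\pi\colon K\to X_2$ be the pullback of $g_1$ along $f_2$, and let $j\colon X_1\to K$ be its kernel, so $kj=f_1$ and $\pi=\cok j$.
\begin{itemize}
\item A direct check shows $\bigl[\begin{smallmatrix}k\\ \pi\end{smallmatrix}\bigr]\colon K\to Y\oplus X_2$ is the pushout of $f_1$ along $j$, hence an $\II_{\operatorname{max},\,\mathcal{A}}$-inflation.
\item By \hypref{L1}, the composite $\bigl[\begin{smallmatrix}1\\ g_1\end{smallmatrix}\bigr]k=(1_Y\oplus f_2)\bigl[\begin{smallmatrix}k\\ \pi\end{smallmatrix}\bigr]$ is then an inflation too.
\item Since $k$ has the cokernel $g_2g_1$, \hypref{L3} shows that $k$ is a semistable kernel.
\end{itemize}

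The same axiom removes what you called the main obstacle in (i). Let $(f',g')$ be the pullback of a stable pair $(f,g)$ along $t$, with comparison map $\beta$. Then $f'$ has the cokernel $g'$, and $\beta f'=f$ is an $\II_{\operatorname{max},\,\mathcal{A}}$-inflation. So $f'$ is a semistable kernel by \hypref{L3}, and no separate pushout analysis is needed.

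\textbf{Wrong step: the construction for \hypref{R3}.} Your plan does not work as stated, because the pullback of $p$ along $t$ is in general not a retract of the pullback of $pi$ along $t$. For example, in vector spaces take $B=C\oplus K$ with $K\neq0$, let $i,p$ be the canonical inclusion and projection, and let $t\colon 0\to C$. The pullback of $pi=1_C$ along $t$ is $0$, while that of $p$ is $K$.

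What works is to pull back $pi$ along $[\hspace{1pt}p\;\,-t\hspace{1pt}]\colon B\oplus C'\to C$.
\begin{itemize}
\item Write this pullback as $w_A\colon W\to A$, $w\colon W\to B\oplus C'$.
\item The endomorphism $e$ of $W$ defined by $w_Ae=0$ and $we=w-\bigl[\begin{smallmatrix}i\\0\end{smallmatrix}\bigr]w_A$ is idempotent.
\item Splitting $e$, which is where karoubianity enters, gives $\ker[\hspace{1pt}p\;\,-t\hspace{1pt}]=B\times_CC'$. In fact $W\cong A\oplus(B\times_CC')$.
\end{itemize}

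Your claim that $p$ is a cokernel also needs a more precise justification. The key input is that $[\hspace{1pt}i\;\,\ker p\hspace{1pt}]\colon A\oplus\ker p\to B$ is epic. This holds because it is the pullback of the semistable cokernel $pi$ along $p$. Then $p\,[\hspace{1pt}i\;\,\ker p\hspace{1pt}]=[\hspace{1pt}pi\;\,0\hspace{1pt}]$ is a cokernel, which forces $p$ to be one. The same argument, applied to the induced map $A\times_CC'\to B\times_CC'$, shows that $B\times_CC'\to C'$ is a cokernel.
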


We need to make some remarks on size.

\begin{rmk}\label{SIZE}As our methods below employ the derived category, and in parts the Gabriel-Quillen embedding, while our applications involve categories of topological or bornological vector spaces with arbitrary dimension, set-theoretic problems might occur. In that respect we first point out that for the approach given in Section \ref{SEC-LRH} it indeed is sufficient to assume that $\Der^{\bb}(\mathcal{A},\CC)$ is locally small; this is what we will assume implicitly for the whole paper, unless stated otherwise. In fact, it is known to hold for the classical categories $\LCS$, $\Ban$, $\Fre$ and $\Hilb$, since $\LCS$, $\Ban$ and $\Fre$ have enough injective objects, see, e.g., \cite[Prop 2.1.12(iii), beginning of Section 4.3, Prop 4.4.6]{Prosmans}, and since in $\Hilb$ every short exact sequence splits. It also holds for $\HDLCS$ using $\Der^{\bb}(\HDLCS)\cong\Der^{\bb}(\LCS)$, which follows analogously to \cite[Prop 3.2.7]{Prosmans} and for $\Bor$, $\SBor$ and $\CBor$ since these categories have enough projectives \cite[Props 2.13, 4.11, 5.8]{ProsmansSchneiders}. More examples will be given in due course. If no `ad hoc' argument as before is available, or if we want to apply results that require (essential) smallness of $\mathcal{A}$, see \cite[p.~442]{HKRW}, we will (implicitly) consider restricted versions of our topological or bornological categories. More precisely, we then follow \cite[Section 8]{Shulman}, fix Grothendieck universes $\mathcal{U}\in\mathcal{V}$ such that $\KK$ belongs to $\mathcal{U}$ and take for $\mathcal{A}$, e.g., all topological vector spaces $(X,\tau)$ enjoying a certain property \emph{and} such that the underlying set $X$ belongs to $\mathcal{U}$. In this way $\mathcal{A}$ is $\mathcal{V}$-small. Writing $\mathcal{U}=V_{\kappa}$ in the notation of \cite[Section 4]{Shulman} with $\kappa$ an inaccessible cardinal, we may alternatively define $\mathcal{A}$ to consist of those $(X,\tau)$ such that $|X|<\kappa$ holds. This category will still be \emph{essentially} small with respect to $\mathcal{V}$. We point out that with both approaches one has to devote some attention to notions that quantify over the whole category (like kernels, cokernels, completions, etc.) and investigate how these are affected by the size restriction. Since in all our applications any construction of that sort turns out to be independent of the restriction, we will in due course mostly omit discussions about size and only refer to the aforementioned explanations as well as to \cite{Shulman}.
\end{rmk}

Next we recall the definition of the derived category. Since this simplifies the definition and is the case for all concrete categories that we consider in the remaining sections, we restrict ourselves to the case that $(\mathcal{A},\CC)$ is (at least) strongly inflation- or strongly deflation-exact, and $\mathcal{A}$ is karoubian. We then denote by $\C(\mathcal{A})$ the category of cochain complexes and by $\K(\mathcal{A})$ the homotopy category furnished with the triangulated structure induced by the mapping cone triangles where we, given $f\colon X^{\bullet}\rightarrow Y^{\bullet}$, use the convention
$$
(\operatorname{cone}f)^n=X^{n+1}\oplus Y^{n} \;\text{ and }\; d_{\operatorname{cone}f}^{\hspace{1pt}n}=\begin{bmatrix}-d_{X}^{\hspace{1pt}n+1} & 0\\\phantom{-}f^{n+1}&d_Y^{\hspace{1pt}n}\end{bmatrix}
$$
for the cone and $\Sigma\colon\K(\mathcal{A})\rightarrow\K(\mathcal{A})$, $(\Sigma{}X)^n=X^{n+1}$, $d_{\Sigma{}X}^{\hspace{1pt}n}=-d_{X}^{\hspace{1pt}n+1}$ for the suspension. For $*\in\{+,-,\text{b}\}$ let $\K^{\ast}(\mathcal{A})$ be the full triangulated subcategory of $\K(\mathcal{A})$ formed by complexes that are homotopic to left bounded, right bounded and bounded complexes, respectively.

\smallskip

\begin{dfn}\label{ACYCLIC}\cite[Dfn 2.14, text after Lem 2.15 and Prop 2.17(3)]{HKRW} Let $(\mathcal{A},\CC)$ be strongly deflation- or inflation-exact and karoubian.\vspace{3pt}
\begin{compactitem}
\item[(i)] A cochain complex $C^{\bullet}=(C^n,d^{\hspace{1pt}n})_{n\in\ZZ}$ is \emph{$\CC$-acyclic} if there exist factorizations of all differentials as follows with $(i^{n-2},p^{n-1})\in\CC$ for all $n\in\ZZ$:\vspace{-3pt}
\begin{equation*}
\begin{tikzcd}[column sep = 1.5em, row sep = 1.2em]
& \cdots \ar[rr] 
&& C^{\hspace{0.5pt}n-1} \ar[rr,"d^{\hspace{0.5pt}n-1}"] \ar[dr,"p^{n-1}"']
&& C^{\hspace{0.5pt}n}     \ar[rr,"d^{\hspace{0.5pt}n}"] \ar[dr,"p^{n}"']
&& C^{\hspace{0.5pt}n+1} \ar[rr] \ar[drr,"p^{n+1}"']
&& \cdots \\
&\phantom{\cdots}\begin{picture}(0,0)(0,0)\put(-12,6){$\cdots$}\end{picture}\ar[urr,"i^{n-2}"']
&&
&\phantom{!}Z^n\phantom{!}\ar[ur,"i^{n-1}"']
&
&Z^{n+1}\ar[ur,"i^{n}"']
&
&&\phantom{\cdots}\phantom{\cdots}\begin{picture}(0,0)(0,0)\put(-18,6){$\cdots$}\end{picture}
&
\end{tikzcd}
\end{equation*}

\item[(ii)] By $\Ac(\mathcal{A},\CC)$ we denote the $\CC$-acyclic complexes. As a subcategory $\Ac(\mathcal{A},\CC)\subseteq\K(\mathcal{A})$ it is thick, triangulated and in particular replete. For $*\in\{+,-,\text{b}\}$ we put $\Ac^{\ast}(\mathcal{A},\CC):=\Ac(\mathcal{A},\CC)\cap\K^{\ast}(\mathcal{A})$ which is a thick subcategory of $\K^{\ast}(\mathcal{A})$ as well.

\vspace{3pt}

\item[(iii)] Let $*\in\{\emptyset,+,-,\text{b}\}$. We define the derived category of $(\mathcal{A},\CC)$ as the Verdier quotient
\[
\Der^{\ast}(\mathcal{A},\CC)=\,^{\textstyle\K^{\ast}(\mathcal{A})}\big/_{\textstyle\Ac^{\ast}(\mathcal{A},\CC)}=\K^{\ast}(\mathcal{A})\hspace{0.5pt}[\hspace{1.5pt}\mathcal{N}^{\hspace{1pt}*}(\mathcal{A},\CC)^{-1}],
\]
where $\mathcal{N}^{\hspace{1pt}*}(\mathcal{A},\CC)=\bigl\{f^{\bullet}\colon X^{\bullet}\rightarrow Y^{\bullet}\:\big|\:\cone(f^{\bullet})\in\Ac^{\ast}(\mathcal{A},\CC)\bigr\}$ is a saturated multiplicative system of quasi-isomorphisms. We furnish $\Der^{\ast}(\mathcal{A},\CC)$ with the natural triangulated structure.\diam{}

\end{compactitem}
\end{dfn}

Notice that in Definition \ref{ACYCLIC}(i) automatically $i^{n-1}=\ker d^{\hspace{1pt}n}$ and $p^{n-1}=\cok d^{\hspace{1pt}n-2}$ hold for all $n\in\ZZ$. This was required in \cite[Dfn 2.14]{HKRW} explicitly, compare Definition \ref{DFN-AC-N}, the remarks after it and Appendix \ref{SEC-APP}.

\smallskip

Let us now summarize some properties of the derived category that are standard in the abelian case and remain valid in our setting.

\begin{thm}\label{RMK-DC}\cite[Prop 3.20, Prop 6.2, Rmk 3.5(2) and Prop 6.3 and dual statements]{HR19} Let $(\mathcal{A},\CC)$ be a strongly deflation- or inflation-exact karoubian category and $*\in\{+,-,\bb,\emptyset\}$.\vspace{3pt}
\begin{compactitem}

\item[(i)] The natural functor $i\colon\mathcal{A}\rightarrow\emph{\Der}^{\ast}(\mathcal{A},\CC)$, that maps $X\in\mathcal{A}$ to the stalk complex $\cdots\rightarrow0\rightarrow X\rightarrow0\rightarrow\cdots$ with $X$ in degree zero, is fully faithful.

\vspace{3pt}

\item[(ii)] A sequence $X\rightarrow Y\rightarrow Z$ in $\mathcal{A}$ is a conflation iff there is a distinguished triangle  $i(X)\rightarrow i(Y)\rightarrow i(Z)\rightarrow i(X)[1]$  in $\emph{\Der}^{\ast}(\mathcal{A},\CC)$.

\vspace{3pt}

\item[(iii)] A sequence $X\rightarrow Y\rightarrow Z$ in $\mathcal{A}$ is a conflation iff the complex $\cdots\rightarrow 0\rightarrow X\rightarrow Y\rightarrow Z\rightarrow 0\rightarrow\cdots$ is $\CC$-acyclic.

\vspace{3pt}

\item[(iv)] If $(\mathcal{A},\CC)$ is exact, then $i(\mathcal{A})\subseteq\emph{\Der}^{\ast}(\mathcal{A},\CC)$ is extension closed.\diam{}
\end{compactitem}
\end{thm}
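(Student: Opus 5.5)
This statement collects standard facts, so the plan is to reduce each item to the cited results of \cite{HR19}, after checking that our hypotheses match theirs. The only discrepancy is the axiom \hypref{R0} versus \textbf{R0}$^{\boldsymbol{\ast}}$. By \cite[Prop 2.6]{HR24}, our \hypref{R0} ensures that split kernel-cokernel pairs are conflations; since $\mathcal{A}$ is karoubian, this gives the hypotheses used in \cite{HR19}. I will also check that our notion of $\CC$-acyclicity (Definition \ref{ACYCLIC}) agrees with theirs. This follows because the factorizations force $i^{n-1}=\ker d^{\hspace{1pt}n}$ and $p^{n-1}=\cok d^{\hspace{1pt}n-2}$. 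Consequently the Verdier quotients, and hence the derived categories, coincide. I treat only the deflation-exact case; the inflation-exact case follows by passing to $\mathcal{A}^{\mathrm{op}}$, which exchanges the two sets of axioms and preserves the notion of acyclicity.

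For (iii), one direction is immediate. If $(f,g)\in\CC$, factor the differentials through $X$, $Y$, $Z$ and $0$ using identities and the conflations $0\rightarrow X\rightarrow X$, $X\rightarrow Y\rightarrow Z$, $Z\rightarrow Z\rightarrow 0$; all of these lie in $\CC$ by \hypref{R0} and closure under isomorphisms. Conversely, suppose the complex is acyclic. The factorization at the middle term yields a conflation $Z'\rightarrowtail Y\twoheadrightarrow Z''$. Here $Z'\rightarrow Y$ is $\ker(Y\rightarrow Z)$, and the neighbouring conflations force $X\cong Z'$ and $Z''\cong Z$, since $X\rightarrow 0\rightarrow\cdots$ acyclic at the ends means the boundary maps are isomorphisms. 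Closure under isomorphisms then gives the claim.

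For (i), I will use that $\Hom_{\Der}(i X, i Y)$ can be computed via roofs $iX\leftarrow C^{\bullet}\rightarrow iY$ with $C^{\bullet}\rightarrow iX$ a quasi-isomorphism. Following \cite[Prop 3.20]{HR19}, I plan to use \hypref{R3} and \hypref{R2} to truncate $C^{\bullet}$ to a complex concentrated in degrees $\leq 0$ whose degree-$0$ part deflates onto $X$. From this, any morphism factors uniquely through an actual map $X\rightarrow Y$. I expect this step to be the main obstacle. In the one-sided setting the truncation requires care, because kernels need not be inflations, and this is exactly where strong deflation-exactness is used.

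Item (ii) follows from (i) and (iii). Given a conflation, the cone of $iX\rightarrow iY$ maps to $iZ$ with acyclic cone, so it becomes isomorphic to $iZ$ in $\Der^{\ast}$. Conversely, given a triangle, the rotation and (i) identify $iZ$ with the cone, and acyclicity of $X\rightarrow Y\rightarrow Z$ follows, so (iii) applies. For (iv), in the exact case, an extension $iX\rightarrow E\rightarrow iZ\rightarrow iX[1]$ is classified by a morphism $iZ\rightarrow iX[1]$. Via Yoneda Ext, which \cite[Prop 6.3]{HR19} identifies with $\Hom_{\Der}(iZ,iX[1])$, this morphism is represented by a conflation $X\rightarrowtail Y\twoheadrightarrow Z$. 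Then $E\cong iY$ by (ii) and the uniqueness of cones up to isomorphism.
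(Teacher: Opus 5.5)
Your strategy of reducing everything to the cited results of \cite{HR19} matches the paper, which gives no argument beyond that citation. Your sketches for (iii), for the forward direction of (ii) and for (iv) are sound. (There is also no axiom mismatch to repair: the paper states that its R0 is exactly the R0$^{\ast}$ of \cite{HR19}.) The real problem is your claim that the converse of (ii) follows from (i) and (iii).

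Take a distinguished triangle $i(X)\xrightarrow{i(f)}i(Y)\xrightarrow{i(g)}i(Z)\xrightarrow{h}i(X)[1]$ and compare it with the cone triangle $i(X)\xrightarrow{i(f)}i(Y)\xrightarrow{\iota}\cone(i(f))\xrightarrow{\pi}i(X)[1]$. This only gives \emph{some} isomorphism $\phi\colon\cone(i(f))\rightarrow i(Z)$ in $\Der^{\ast}(\mathcal{A},\CC)$ with $\phi\iota=i(g)$. To get acyclicity of $X\rightarrow Y\rightarrow Z$, you need the \emph{specific} chain map $\tilde g\colon\cone(i(f))\rightarrow i(Z)$, given by $g$ in degree $0$, to be invertible in $\Der^{\ast}$. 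Then thickness of $\Ac^{\ast}(\mathcal{A},\CC)$ gives $\cone(\tilde g)\in\Ac^{\ast}(\mathcal{A},\CC)$, and $\cone(\tilde g)$ is $X\xrightarrow{-f}Y\xrightarrow{g}Z$. (For $\tilde g$ to be a chain map you need $gf=0$; this does follow from faithfulness in (i).)

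However, $\tilde g$ and $\phi$ agree only after precomposition with $\iota$. So $\tilde g=\phi+\psi\pi$ for some $\psi\colon i(X)[1]\rightarrow i(Z)$, and nothing in (i) or (iii) controls $\psi$ or makes $\phi+\psi\pi$ invertible. The degenerate case $f=0$, $X\neq0$ shows the issue is real. A triangle $i(X)\xrightarrow{0}i(Y)\xrightarrow{i(g)}i(Z)\rightarrow i(X)[1]$ forces $i(Z)\cong i(Y)\oplus i(X)[1]$. Ruling this out needs information about maps $i(X)[1]\rightarrow i(Z)$, which full faithfulness on degree-$0$ stalks does not provide.

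The missing input is the vanishing of negative extensions, $\Hom_{\Der^{\ast}}(i(X)[1],i(Z))=\Hom_{\Der^{\ast}}(i(X),i(Z)[-1])=0$. Once you have it, $\psi=0$, so $\tilde g=\phi$ is invertible, and (iii) finishes the argument. The gap is fixable with the same tool you plan for (i). Run your truncation for a quasi-isomorphism $C^{\bullet}\rightarrow i(X)[1]$ instead of $C^{\bullet}\rightarrow i(X)$. It replaces $C^{\bullet}$ by a quasi-isomorphic complex concentrated in degrees $\leqslant-1$, and such a complex admits only the zero chain map to $i(Z)$. So you should prove $\Hom_{\Der^{\ast}}(i(X),i(Z)[n])=0$ for $n<0$ together with (i), or take it from the source, rather than deriving (ii) from (i) and (iii) alone.
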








\section{Left and Right Hearts}\label{SEC-LRH}

Let $(\mathcal{A},\CC)$ be a strongly deflation-exact category with kernels. In \cite[Section 3]{HKRW} the authors defined for a complex $C^{\bullet}$ over $\mathcal{A}$ and $n\in\ZZ$ the canonical `left' truncations
\begin{equation}\label{TRUNC}
\begin{tikzcd}[column sep=17pt,
row sep=1pt,
column 1/.style={
    nodes={
      text width=4cm,
      align=right
    }
  }]
\mathmakebox[2cm][r]{\tau_{\LL}^{\leqslant n}C^{\bullet}}
  &[-20pt]=
  &[-20pt] \cdots\arrow[r]
  & C^{\hspace{0.5pt}n-2} \arrow[r]
  & C^{\hspace{0.5pt}n-1} \arrow[r, "p^{n-1}"]
  & \ker d^{\hspace{1pt}n} \arrow[r]
  & 0 \arrow[r]
  & \cdots
\\
\mathmakebox[2cm][r]{\tau_{\LL}^{\geqslant n+1}C^{\bullet}}
  &[-20pt]=
  &[-20pt] \cdots \arrow[r]
  & 0 \arrow[r]
  & \ker d^{\hspace{1pt}n} \arrow[r, "i^{n-1}"]
  & C^{\hspace{0.5pt}n} \arrow[r]
  & C^{\hspace{0.5pt}n+1} \arrow[r]
  & \cdots
\end{tikzcd}
\end{equation}
where we added an `L' to the notation to distinguish from the dual version to be discussed later. There are natural maps $\tau_{\LL}^{\leqslant n}C^{\bullet}\rightarrow C^{\bullet}$ and $C^{\bullet}\rightarrow\tau_{\LL}^{\geqslant n+1}C^{\bullet}$. The truncation functors lead to the left t-structure $(\Der^{\leqslant0}_{\LL}(\mathcal{A},\CC),\Der^{\geqslant 0}_{\LL}(\mathcal{A},\CC))$ on the unbounded derived category of $(\mathcal{A},\CC)$ given by
\begin{equation}\label{LEFT-T-S}
\begin{aligned}
\Der^{\leqslant0}_{\LL}(\mathcal{A},\CC) & =  \bigl\{C^{\bullet}\in\Der(\mathcal{A},\CC)\:|\:\tau_{\LL}^{\geqslant1}C^{\bullet}\cong0\bigr\}\\[-3pt]
& =  \bigl\{C^{\bullet}\in\Der(\mathcal{A},\CC)\:|\:\forall\:n\geqslant 1\colon\ker d^{\hspace{1pt}n-1}\stackrel{i^{n-2}}{\longrightarrow}C^{n-1}\stackrel{p^{n-1}}{\longrightarrow}\ker d^{\hspace{1pt}n}\in\CC\bigr\},\\[5pt]
\Der^{\geqslant0}_{\LL}(\mathcal{A},\CC) & =  \bigl\{C^{\bullet}\in\Der(\mathcal{A},\CC)\:|\:\tau_{\LL}^{\leqslant-1}C^{\bullet}\cong0\bigr\}\\[-3pt]
& =  \bigl\{C^{\bullet}\in\Der(\mathcal{A},\CC)\:|\:\forall\:n\leqslant-1\colon\ker d^{\hspace{1pt}n-1}\stackrel{i^{n-2}}{\longrightarrow}C^{n-1}\stackrel{p^{n-1}}{\longrightarrow}\ker d^{\hspace{1pt}n}\in\CC\bigr\}.
\end{aligned}
\end{equation}
We denote by $\mathcal{LH}(\mathcal{A},\CC) = \Der^{\leqslant0}_{\LL}(\mathcal{A},\CC)\cap \Der^{\geqslant0}_{\LL}(\mathcal{A},\CC)\subseteq\Der(\mathcal{A},\CC)$ the so-called \emph{left heart} of the left t-structure and obtain the left cohomology functors
\begin{equation}\label{EQ-LH}
\begin{array}{rcl}
\operatorname{LH}^{\hspace{0.5pt}n}&\hspace{-4pt}=&\hspace{-4pt}\tau_{\LL}^{\leqslant0}\circ\tau_{\LL}^{\geqslant0}\circ\Sigma^n\colon \Der(\mathcal{A},\CC)\rightarrow\mathcal{LH}(\mathcal{A},\CC)\\[4pt]
\operatorname{LH}^{\hspace{0.5pt}n}(C^{\bullet})&\hspace{-4pt}=&\hspace{-4pt}\cdots\rightarrow0\rightarrow\ker d^{\hspace{1pt}n-1}\xrightarrow{\pm i^{\hspace{0.5pt}n-2}}C^{n-1}\xrightarrow{\pm p^{n-1}}\ker d^{\hspace{1pt}n}\rightarrow0\rightarrow\cdots\end{array}
\end{equation}
where $\ker d^{\hspace{1pt}n}$ is in degree $0$ and $\operatorname{LH}^{\hspace{0.5pt}n}(f^{\bullet}\colon X^{\bullet}\rightarrow Y^{\bullet})$ has as components $f^{n-1}$ in degree $-1$ and the natural maps between the kernels. The embedding $i\colon\mathcal{A}\rightarrow\Der(\mathcal{A},\CC)$ corestricts to an embedding $\phi\colon\mathcal{A}\rightarrow\mathcal{LH}(\mathcal{A},\CC)$ that sends objects to stalks. Properties of $\mathcal{LH}(\mathcal{A},\CC)$ and $\phi$ are summarized in the following theorem.

\begin{thm}\label{THM-LH-PHI}\cite[Prop 3.9, Prop 3.11, Thm 3.12]{HKRW} Let $(\mathcal{A},\CC)$ be a strongly deflation-exact category with kernels. Then $\mathcal{LH}(\mathcal{A},\CC)$ is an abelian category. Moreover:\vspace{3pt}
\begin{compactitem}

\item[(i)] The embedding $\phi\colon(\mathcal{A},\CC)\rightarrow\mathcal{LH}(\mathcal{A},\CC)$ is fully faithful, exact, reflects exactness, commutes with kernels and preserves and reflects monomorphisms.

\vspace{3pt}

\item[(ii)] It lifts to a triangle equivalence $\emph{\Der}^*(\mathcal{A},\CC)\rightarrow\emph{\Der}^*(\mathcal{LH}(\mathcal{A},\CC))$ for $*\in\{-,\emph{\bb},\emptyset\}$.\hfill\diam{}

\end{compactitem}
\end{thm}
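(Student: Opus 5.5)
The statement is cited from \cite{HKRW}, so the most economical proof assembles the standard t-structure machinery rather than redoing it. The plan has four steps.

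\emph{Step 1: the left t-structure.} First I would verify that the pair \eqref{LEFT-T-S} is a t-structure on $\Der(\mathcal{A},\CC)$. For every complex $C^{\bullet}$ and every $n$, the maps $\tau_{\LL}^{\leqslant n}C^{\bullet}\to C^{\bullet}\to\tau_{\LL}^{\geqslant n+1}C^{\bullet}$ from \eqref{TRUNC} should extend to a distinguished triangle. To see this, compare the cone of the first map with $\tau_{\LL}^{\geqslant n+1}C^{\bullet}$. The only nontrivial part of the difference is the complex
$$\ker d^{\hspace{1pt}n}\xrightarrow{\;\id\;}\ker d^{\hspace{1pt}n},$$
which is split, and hence $\CC$-acyclic by \hypref{R0} and the fact that split sequences are conflations. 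Next comes the orthogonality $\Hom(\Der^{\leqslant0}_{\LL},\Der^{\geqslant1}_{\LL})=0$. I would prove it by representing a morphism as a roof with a quasi-isomorphism and using \hypref{R3}. The role of \hypref{R3} is to ensure that a morphism $p$ with $p\hspace{1pt}i$ a deflation is itself a deflation. This lets one replace the source by a complex concentrated in degrees $\leqslant0$ and the target by one concentrated in degrees $\geqslant1$ up to quasi-isomorphism, after which every chain map between them is zero. Shift-stability of the two aisles is immediate from the definitions.

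\emph{Step 2: the heart is abelian.} Once Step 1 is in place, the heart of a t-structure is abelian by Beilinson--Bernstein--Deligne, and $\operatorname{LH}^{0}$ is a cohomological functor. This requires only that $\Der(\mathcal{A},\CC)$ be locally small, which we assume by Remark \ref{SIZE}.

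\emph{Step 3: properties of $\phi$.} Fully faithfulness follows from Theorem \ref{RMK-DC}(i). Exactness and reflection of exactness follow from Theorem \ref{RMK-DC}(ii): conflations correspond to triangles with vertices in the heart, and these are exactly the short exact sequences in a heart. For kernels, take $f\colon X\to Y$ and the object $\ker f\to X\to Y$ of the heart. Applying $\operatorname{LH}^{0}$ to the cone of $i(f)$ and using the long exact sequence, one finds $\phi(\ker f)=\ker\phi(f)$. It then follows that $\phi$ preserves and reflects monomorphisms, since $f$ is monic exactly when $\ker f=0$.

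\emph{Step 4: derived equivalence (main obstacle).} I expect this to be the hardest step. The approach is to use that $\phi$ is exact and that every object of the heart is a two-term complex $X^{-1}\to X^{0}$, hence has a resolution of length one by objects of $\phi(\mathcal{A})$. More precisely, each heart object admits an epimorphism from $\phi(X^{0})$ whose kernel is again a quotient of $\phi(X^{-1})$. Then:
\begin{compactitem}
\item[(a)] I would verify the dual of Keller's criterion: $\phi(\mathcal{A})$ is generating and closed under kernels of epimorphisms, and every epimorphism onto $\phi(A)$ in the heart is covered by a deflation in $\mathcal{A}$. Here \hypref{R2} and \hypref{R3} enter.
\item[(b)] From (a), the realization functor $\Der^{-}(\mathcal{A},\CC)\to\Der^{-}(\mathcal{LH})$ is an equivalence.
\item[(c)] The case $*=\bb$ follows because the resolutions have finite length. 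The unbounded case follows by a homotopy-limit argument over the truncations.
\end{compactitem}
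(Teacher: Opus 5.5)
\paragraph{Main gap: orthogonality in Step~1.}
The paper does not reprove this theorem; it quotes it from \cite{HKRW}. Its Appendix does, however, reprove the piece everything else rests on, namely the left t-structure (Theorem \ref{L-T-STRC}). That is exactly where your proposal has a genuine gap. In Step~1 the truncation triangles (Proposition \ref{TRIAG-K}) and shift-stability are fine, but orthogonality is not established.

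First, the replacement you describe is not available. The complex $\tau_{\LL}^{\geqslant1}D=(\ker d^{\hspace{1pt}0}\hookrightarrow D^0\rightarrow D^1\rightarrow\cdots)$ starts in degree $-1$ with a monomorphism whose cokernel need not exist or form a conflation. Suppose every object of $\Der^{\geqslant1}_{\LL}(\mathcal{A},\CC)$ were isomorphic to a complex concentrated in degrees $\geqslant1$. Then every object $H$ of the heart would satisfy $H\cong\operatorname{LH}^0(H)\cong\phi(\ker d^{\hspace{1pt}0}_E)$ for some complex $E$ concentrated in degrees $\geqslant0$. So $\phi$ would be an equivalence and $\mathcal{A}$ would be abelian, which is false, e.g., for Banach spaces. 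The correct statement is Proposition \ref{K-T-STR}: a chain map from a complex concentrated in degrees $\leqslant0$ into $\tau_{\LL}^{\geqslant1}D$ is null-homotopic because $\ker d^{\hspace{1pt}0}\rightarrow D^0$ is monic.

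Second, and more seriously, this only gives vanishing in $\K(\mathcal{A})$. A morphism in $\Der(\mathcal{A},\CC)$ is a roof $C\xleftarrow{\,s\,}W\rightarrow D$. To kill it you must know that $W$ is again acyclic in all degrees $\geqslant1$, i.e., that acyclicity in a fixed degree is transported along morphisms with $\CC$-acyclic cone. The same fact is needed even to see that the degreewise descriptions in \eqref{LEFT-T-S} depend only on the isomorphism class in $\Der(\mathcal{A},\CC)$. This is exactly Lemma \ref{NEW-PROP}, together with Lemma \ref{EQUIV-LEM}: in a triangle, acyclicity in degree $n$ of the outer terms passes to the middle term. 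It is proved by exhibiting $[\delta_1\;\delta_2]$ as a pullback of the deflation $p_Z^{n-1}$ and then applying \hypref{R2} and \hypref{R1}. The paper then feeds this into Schneiders' criterion, Proposition \ref{SCH-LEM}. Axiom \hypref{R3}, in the form you invoke it, does not yield this statement, so as written your orthogonality argument has no proof.

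\paragraph{Steps~2--4.}
Steps~2 and~3 are fine; note that the kernel of $\phi(f)$ is $\operatorname{LH}^{-1}(\cone f)$, not an $\operatorname{LH}^{0}$. Step~4 needs several corrections.
\begin{itemize}
\item Without admissible kernels, objects of the heart are three-term complexes $\ker f\hookrightarrow X'\xrightarrow{f}X$ (Theorem \ref{NEW-LH}). The two-term picture requires admissible kernels (Theorem \ref{THM-HEART-2}).
\item Your covering argument still works. However, since $\phi$ commutes with kernels, it produces the exact sequence $0\rightarrow\phi(\ker f)\rightarrow\phi(X')\rightarrow\phi(X)\rightarrow X_f\rightarrow0$. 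This is a resolution of length two, not one; what matters is the uniform bound.
\item Keller's criterion is formulated for exact categories, so you need a version valid for deflation-exact categories. Its covering hypothesis does hold: an epimorphism $\phi(u)\colon\phi(X)\rightarrow\phi(A)$ has cokernel $(\ker u\hookrightarrow X\xrightarrow{u}A)\cong0$ by Theorem \ref{KER-COK-PROP}(iii), which forces $(k_u,u)\in\CC$.
\item The unbounded case should come from the uniform bound on resolution length: resolve by double complexes with finitely many rows and totalize. Homotopy limits are not the right tool here, since they require countable products that $\Der(\mathcal{A},\CC)$ need not have.
\end{itemize}
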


We now give an explicit description of the left heart without assuming that all kernels are admissible. The proof verifies the multiplicative system axioms constructively; for brevity, the corresponding formulas are omitted from the statement of the theorem.

\begin{thm}\label{NEW-LH} Let $(\mathcal{A},\CC)$ be a strongly deflation-exact category with kernels. Then we have the following description of the left heart $\mathcal{LH}(\mathcal{A},\CC)\cong{}\hMork(\mathcal{A})[\mathcal{S}^{-1}_{\CC}]$:\vspace{2pt}
\begin{myitemize}

\item[(i)] The objects of $\hMork(\mathcal{A})$ are sequences of the form $\ker f\hookrightarrow X'\stackrel{\scriptscriptstyle f}{\rightarrow}X$ in $\mathcal{A}$ which we will abbreviate by $X_f$.

\item[(ii)]  A morphism $\alpha=(\alpha'',\alpha',\alpha) \colon X_f\rightarrow Y_g$ in $\hMork(\mathcal{A})$ is a commutative diagram
\begin{equation*}
\begin{tikzcd}
\ker f\ar[r,hook,"k_f"]\ar[d,swap,"\alpha''"]& X'\arrow{r}{f}\arrow{d}[swap]{\alpha'} & X\ar[d,"\alpha"] \\[4pt]
\ker g\ar[r,hook,swap,"k_g"]& Y' \arrow{r}[swap]{g} & Y
\end{tikzcd}
\end{equation*}
modulo the equivalence relation $\alpha\sim\beta$ iff $\exists\:\rho\colon X\rightarrow Y'\colon\alpha-\beta=g\rho$.\vspace{3pt}

\item[(iii)] The class $\mathcal{S}_{\CC}$ consists of all morphisms $\alpha$ for which $\cone(\alpha)$ factorizes as
\begin{equation*}
\begin{tikzcd}[column sep = 1.5em, row sep = 1.2em, ampersand replacement=\&]
 \cone(\alpha):=\ker f \ar[rr,->,"{\bigl[\begin{smallmatrix}-k_f\\\phantom{-}\alpha''\end{smallmatrix}\bigr]}"] 
\&\& X'\oplus\ker g\ar[rr,"{\bigl[\begin{smallmatrix}-f&0\\\phantom{-}\alpha'&k_g\end{smallmatrix}\bigr]}"] \ar[dr,->,"r"']
\&\& X\oplus Y' \ar[rr,->,"{[\hspace{1pt}\alpha\;\,g\hspace{1pt}]}"]
\&\& Y\\
\&\&\&\ker{[\hspace{1pt}\alpha\;\,g\hspace{1pt}]}\ar[ur,->,"{k_{[\alpha\,g]}}"']\&\&\&\&
\end{tikzcd}
\end{equation*}
with $(\bigl[\begin{smallmatrix}-k_f\\\phantom{-}\alpha''\end{smallmatrix}\bigr],r)$ and $(k_{[\alpha\,g]},[\hspace{1pt}\alpha\;\,g\hspace{1pt}])$ belonging to  $\CC$.\vspace{3pt}

\item[(iv)] The class $\mathcal{S}_{\CC}$ is a saturated multiplicative system, i.e., the morphisms of $\hMork(\mathcal{A})[\mathcal{S}^{-1}_{\CC}]$ can be represented as left fractions $\sigma^{-1}\alpha\equiv(X_f\stackrel{\hspace{-3.5pt}\alpha}{\longrightarrow}Z_h\stackrel{\sigma\hspace{-2pt}}{\longleftarrow}Y_g)$ or right fractions $\alpha\hspace{0.5pt}\sigma^{-1}\equiv(X_f\stackrel{\sigma\hspace{-2pt}}{\longleftarrow}Z_h\stackrel{\hspace{-3.5pt}\alpha}{\longrightarrow}Y_g)$ with $\sigma\in S_{\CC}$ and there is a calculus to compute with them. 
\end{myitemize}

\end{thm}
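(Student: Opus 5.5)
Our plan is to realize $\hMork(\mathcal{A})$ as a full subcategory of the homotopy category $\K(\mathcal{A})$ and to show that the localization functor $\K(\mathcal{A})\rightarrow\Der(\mathcal{A},\CC)$ restricts to a functor $Q\colon\hMork(\mathcal{A})\rightarrow\mathcal{LH}(\mathcal{A},\CC)$ which inverts exactly $\mathcal{S}_{\CC}$ and induces the claimed equivalence.

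First, we place $\ker f\rightarrow X'\rightarrow X$ in degrees $-2,-1,0$. Any chain map $\alpha\colon X_f\rightarrow Y_g$ is a commutative diagram as in (ii). A null-homotopy consists of maps $h^{0}\colon X\rightarrow Y'$, $h^{-1}\colon X'\rightarrow\ker g$ and $h^{-2}=0$ into degree $-3$. Since $k_g$ is a monomorphism and $f k_f=0$, every null-homotopy is determined, up to what the relation in (ii) records, by $\rho=h^0$: the components $\alpha'$ and $\alpha''$ are then forced modulo $g\rho$ and $k_g$-corrections, and these vanish compatibly. Hence $\Hom_{\K}(X_f,Y_g)=\Hom_{\hMork}(X_f,Y_g)$.

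Each $X_f$ lies in $\mathcal{LH}(\mathcal{A},\CC)$ by (\ref{LEFT-T-S}). The only nontrivial condition is at degree $-1$, where $\ker f\rightarrow X'\rightarrow\ker f$ must be exact. Here $0\rightarrow\ker f\xrightarrow{\id}\ker f$ is a conflation by \hypref{R0} and split sequences. Conversely, $\operatorname{LH}^0$ of any complex is of the form $X_f$ by (\ref{EQ-LH}), with $f=i^{-1}p^{-1}=d^{-1}$ and $\ker d^{-1}$ in degree $-2$. Thus $Q$ is essentially surjective.

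Next we identify $\mathcal{S}_{\CC}$ with the morphisms that become isomorphisms. A morphism $\alpha$ is invertible in $\Der$ iff $\cone(\alpha)\in\Ac(\mathcal{A},\CC)$. The cone is the four-term complex displayed in (iii). By the remark after Definition \ref{ACYCLIC}, acyclicity of a bounded complex forces the factorizations through the kernels. At the left end, $\ker f\rightarrow X'\oplus\ker g$ must be an inflation whose cokernel is $r$; at the right end, $[\alpha\;g]$ must be a deflation with kernel $k_{[\alpha\,g]}$. These are exactly the two conditions in (iii), together with the middle factorization through the kernel. So $\mathcal{S}_{\CC}=\{\alpha : Q\alpha \text{ invertible}\}$, and saturation follows from this description.

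For (iv) we verify (MS1)--(MS3) directly in $\hMork(\mathcal{A})$. (MS1), closure under composition and identities, follows from the octahedral axiom: $\Ac$ is triangulated, so cones of composites of quasi-isomorphisms are acyclic. For the Ore condition (MS2), given $\sigma\colon Z_h\rightarrow Y_g$ in $\mathcal{S}_{\CC}$ and $\alpha\colon X_f\rightarrow Y_g$, we build an explicit pullback-type object. We form $W'=X'\times\!\!\!$ fibered appropriately and use \hypref{R2} to pull back the deflation $[\sigma\;g]$ along $[\alpha\;g]$-type maps; kernels exist by hypothesis. Then we check the two conflation conditions via \hypref{R1}, \hypref{R2} and \hypref{R3}. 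The dual Ore condition needs a pushout-type construction. Since \hypref{L2} is not available, we realize it by passing to $\mathcal{LH}$: abelian pushouts there, followed by $\operatorname{LH}^0$ of a cone complex, give a representative in $\hMork$.

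For cancellation (MS3), suppose $\sigma\alpha\sim0$, i.e.\ $\sigma\alpha=h\rho$. We construct $\tau\in\mathcal{S}_{\CC}$ with $\alpha\tau\sim0$ by pulling back along the factorization. Alternatively, MS2 and MS3 follow abstractly, because $\mathcal{S}_{\CC}$ is the class of morphisms in a full subcategory $\hMork\subseteq\K$ whose cones lie in a thick subcategory, and $\hMork$ is closed under the needed cone/truncation operations after applying $\operatorname{LH}^0$.

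Finally, full faithfulness of $\hMork[\mathcal{S}_{\CC}^{-1}]\rightarrow\mathcal{LH}$ follows. Every morphism $QX_f\rightarrow QY_g$ in $\Der$ is a roof $X_f\leftarrow C\rightarrow Y_g$. Applying $\tau^{\leqslant0}_{\LL}\tau^{\geqslant0}_{\LL}$ and the fact that the truncation maps to heart objects are isomorphisms replaces $C$ by $\operatorname{LH}^0(C)\in\hMork$, giving a right fraction. Injectivity is proved the same way.

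The main obstacle is the explicit verification of the Ore conditions and of this roof replacement. Non-admissible kernels mean that $\ker f\rightarrow X'$ need not be an inflation, so each constructed square must be checked against \hypref{R3} carefully.
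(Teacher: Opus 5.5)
Your outline has the same architecture as the paper's proof. You identify the relation in (ii) with chain homotopy, so that $\hMork(\mathcal{A})\subseteq\K(\mathcal{A})$ is full. You read $\mathcal{S}_{\CC}$ as the maps with $\CC$-acyclic cone, get (MS1) from $\Ac(\mathcal{A},\CC)$ being triangulated, and obtain essential surjectivity and fullness/faithfulness via $\operatorname{LH}^0$. One small slip: $\operatorname{LH}^0(C^{\bullet})$ is the object $X_f$ with $f=p^{-1}\colon C^{-1}\to\ker d^{\hspace{1pt}0}$, not $f=d^{-1}$.

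The gap is that (MS2) and (MS3), which are the real content of (iv) and on which your fraction-based faithfulness argument also depends, are only gestured at.

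\emph{First Ore condition.} ``$W'=X'\times$ fibered appropriately'' is not a construction, and naming \hypref{R1}--\hypref{R3} is not a verification. What works is the paper's step \textcircled{1}. In your notation ($\sigma\colon Z_h\to Y_g$ in $\mathcal{S}_{\CC}$, $\alpha\colon X_f\to Y_g$):
let $S$ be the pullback of $g$ along $[\alpha\;\,{-\sigma}]\colon X\oplus Z\to Y$. It exists because $\mathcal{A}$ has kernels; this is not \hypref{R2}, since $g$ is arbitrary. Take the plain sum $S'=X'\oplus Z'\oplus\ker g$, with $u\colon S'\to S$ induced by $\bigl[\begin{smallmatrix}f&0&0\\0&h&0\end{smallmatrix}\bigr]$ and $[\alpha'\;\,{-\sigma'}\;\,k_g]$, and check $\ker u\cong\ker f\oplus\ker h$. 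Acyclicity of the cone of the projection $S_u\to X_f$ then reduces to a sequence that factors through $\ker[\sigma\;\,g]$ with the same $r$ as in $\cone(\sigma)$. This works because $S\to X$ is the \hypref{R2}-pullback of the deflation $[\sigma\;\,g]$ along $\alpha$. \hypref{R3} is not needed here; it enters only in the dual condition.

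\emph{(MS3).} You treat only one implication, which is the paper's step \textcircled{4}: pull back the deflation $r$ of $\cone(\sigma)$ along the induced map $X\to\ker[\sigma\;\,h]$, then pull back $f$ along the resulting deflation, and use that the squarefold of a pullback along a deflation is a conflation. The converse is missing: that $\delta\sigma\sim0$ with $\sigma\in\mathcal{S}_{\CC}$ yields some $\tau\in\mathcal{S}_{\CC}$ with $\tau\delta\sim0$.

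\emph{Dual Ore condition.} Your idea can be made rigorous, but not as phrased. A pushout formed in $\mathcal{LH}(\mathcal{A},\CC)$ has fractions as legs and commutes only in $\Der(\mathcal{A},\CC)$, whereas (MS2) needs genuine morphisms of $\hMork(\mathcal{A})$ commuting up to homotopy. The fix is to apply $\operatorname{LH}^0$, which is a functor on $\K(\mathcal{A})$ and the identity on $\hMork(\mathcal{A})$, to the triangle $Z_h\to X_f\oplus Y_g\to C^{\bullet}\to\Sigma Z_h$ built from $\tau\colon Z_h\to X_f$ in $\mathcal{S}_{\CC}$ and $\alpha\colon Z_h\to Y_g$.
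Then $\operatorname{LH}^0(C^{\bullet})$ is exactly the paper's $S_u$, with $u\colon Z\oplus X'\oplus Y'\to X\oplus Y$. The induced chain maps make the square commute in $\K(\mathcal{A})$, because consecutive maps of a triangle compose to zero. Since $\operatorname{LH}^0$ is cohomological and $\operatorname{LH}^1(Z_h)=0$, $S_u$ is the pushout in the abelian category $\mathcal{LH}(\mathcal{A},\CC)$. Hence $Y_g\to S_u$ is invertible there, and it lies in $\mathcal{S}_{\CC}$ by saturation of $\mathcal{N}(\mathcal{A},\CC)$.

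The same template, with $\Sigma^{-1}C^{\bullet}$ and $\operatorname{LH}^{-1}(Y_g)=0$ for pullbacks and kernels, also gives your first Ore condition and both halves of (MS3). It is exactly how the paper proves the half you omitted (step \textcircled{3}: $\operatorname{LH}^0$ of $Y_g\to\cone(\delta)$ is $\cok\delta$, and $\delta=0$ in $\Der(\mathcal{A},\CC)$). Your ``alternatively'' sentence presumably aims at this, but as written (``cones lie in a thick subcategory'') it names none of these ingredients. Carried out, this route is shorter and uniform. The paper's explicit \hypref{R2}/\hypref{R3} constructions in \textcircled{1}, \textcircled{2} and \textcircled{4} instead give concrete formulas for composing and cancelling fractions, cf.\ Remark \ref{MS-RMK}(iii).
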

\begin{proof} We read (i) as the definition of the objects of $\hMork(\mathcal{A})$ and observe quickly that the equivalence relation stated in (ii) coincides with usual homotopy if we read the objects as complexes: Indeed, if a chain map $\alpha$ is null-homotopic, then a map $\rho$ with the given property exists by definition. For the other direction we observe that $\alpha=g\rho$ implies $g(\alpha'-\rho f)=0$ from whence we get from the universal property of the kernel of $g$ a map $\rho'\colon X'\rightarrow\ker g$ with $\alpha'=k_g\rho'+\rho f$. Since $k_g$ is monic it then follows $\alpha''=\rho'k_f$. Part (iii) completes the definition of the morphisms of $\hMork(\mathcal{A})[\mathcal{S}_{\CC}^{-1}]$; we observe that $\cone(\alpha)$ from (iii) as a complex is the mapping cone of $\alpha$ if we read the latter as a chain map. 

\smallskip

Below we will firstly establish that $\mathcal{S}_{\CC}$ is a multiplicative system before returning to the claimed equivalence $\mathcal{LH}(\mathcal{A},\CC)\cong\hMork(\mathcal{A})[\mathcal{S}_{\CC}^{-1}]$.

\smallskip

We verify the conditions (MS1)\hspace{1pt}--\hspace{1pt}(MS3) in the notation of \cite[Section 1.3]{KrauseChicago}. Indeed, (MS1) follows as $\Ac(\mathcal{A},\CC)\subseteq\K(\mathcal{A},\CC)$ is a triangulated subcategory and thus $\mathcal{N}(\mathcal{A},\CC)$ is a multiplicative system by what we noted in Definition \ref{ACYCLIC}(ii)\hspace{1pt}--\hspace{1pt}(iii) and in the first paragraph of this proof. The two remaining conditions we are going to prove explicitly, cf.~Remark \ref{MS-RMK}.
\smallskip

\textcircled{1} We begin with the first part of (MS2). Let $\alpha\colon X_f\rightarrow Z_h$ and $\tau\colon Y_g\rightarrow Z_h$ be morphisms with $\tau\in\mathcal{S}_{\CC}$, i.e.,
\[
\begin{tikzcd}
\ker f\ar[r,hook,"k_f"]\ar[d,swap,"\alpha''"]& X'\arrow{r}{f}\arrow{d}[swap]{\alpha'} & X\ar[d,"\alpha"] \\[4pt]
\ker h\ar[r,hook,"k_h"]& Z' \arrow{r}{h} & Z\\[4pt]
\ker g\ar[r,hook,swap,"k_g"]\ar[u,"{\tau''}"]& Y'\ar[u,"{\tau'}"] \arrow{r}[swap]{g} & Y\ar[u,swap,"{\tau}"]
\end{tikzcd}
\]
commutes. We use the pullback $S$ of $h$ along $[\alpha\;-\hspace{-1pt}\tau]$ and $S':=X'\oplus Y'\oplus\ker h$ to get
\begin{equation}\label{EQ-PB-PRF}
\begin{tikzcd}[ampersand replacement=\&]
S'\ar[rd,"u"]\ar[rrd,bend left =25, "{\bigl[\begin{smallmatrix}f&0&0\\0&g&0\end{smallmatrix}\bigr]}"]\ar[rdd, bend right=35,swap, "{[\alpha'\;-\tau'\;\,k_h]}"]\& \& \\
\&S\ar[d,swap,"{s_2}"]\ar[r,"{s_1}"] \commutes[\mathrm{PB}]{dr} \& X\oplus Y\ar[d, "{[\alpha\;-\tau]}"]\\
\& Z'\ar[r,swap,"{h}"] \& Z
\end{tikzcd}
\end{equation}
and claim that
\[
k_u:=\Bigl[\begin{smallmatrix}k_f & 0\\0&k_g\\-\alpha''&\tau''\end{smallmatrix}\Bigr]\colon \ker u:=\ker f\oplus\ker g\rightarrow S'
\]
is a kernel of $u$. Indeed, for $i=1,2$ we calculate $s_iu\hspace{1pt}k_u=0$ and then use the pullback diagram above to conclude $u\hspace{1pt}k_u=0$. Let then $t:=[t_1\;t_2\;t_3]^{\T}\colon T\rightarrow S'$ be given with $u\hspace{1pt}t=0$. In particular $\bigl[\begin{smallmatrix}ft_1\\gt_2\end{smallmatrix}\bigr]=s_1u\hspace{1pt}t=\bigl[\begin{smallmatrix}0\\0\end{smallmatrix}\bigr]$ and we get unique $a_1\colon T\rightarrow \ker f$, $a_2\colon T\rightarrow\ker g$ with $t_1=k_fa_1$, $t_2=k_ga_2$. Using that $[\hspace{1pt}\alpha'\;-\hspace{-2pt}\tau'\;k_h\hspace{1pt}]\hspace{1pt}t=0$ and that $k_h$ is monic we obtain $k_u\bigl[\begin{smallmatrix}a_1\\a_2\end{smallmatrix}\bigr]=t$ and we see that $\bigl[\begin{smallmatrix}a_1\\a_2\end{smallmatrix}\bigr]$ is unique with this property. We thus defined an object $S_u\in\hMork(\mathcal{A})$ and continue with defining maps $\sigma\colon S_u\rightarrow X_f$ and  $\beta\colon S_u\rightarrow Y_g$, i.e.,
\[
\begin{tikzcd}
\ker f\ar[r,hook,"k_f"]& X'\arrow{r}{f} & X\\[4pt]
\ker u\ar[r,hook,"k_u"]\ar[d,swap,"{\beta''}"]\ar[u,"\sigma''"]& S' \arrow{r}{u}\ar[d,swap,"{\beta'}"]\arrow{u}{\sigma'} & S\ar[u,swap,"\sigma"]\ar[d,"{\beta}"] \\[4pt]
\ker g\ar[r,hook,swap,"k_g"]& Y' \arrow{r}[swap]{g} & Y
\end{tikzcd}
\]
by letting $\sigma'',\sigma',\beta'',\beta'$ be the natural projections and $\sigma:=[\hspace{1pt}1\;\,0\hspace{1pt}]s_1$, $\beta:=[\hspace{1pt}0\;\,1\hspace{1pt}]s_1$. To see that the square
\begin{equation*}
\begin{tikzcd}
S_u\arrow{r}{\beta}\arrow{d}[swap]{\sigma} & Y_g \arrow{d}{\tau}\\[4pt]
X_f \arrow{r}[swap]{\alpha} & Z_h
\end{tikzcd}
\end{equation*}
is commutative in $\hMork(\mathcal{A})$ we consider $\alpha\sigma-\tau\beta\colon S_u\rightarrow Z_h$ and use (ii) with $\rho=s_2$ to conclude that the former is null-homotopic. With $u=[u_1\;u_2\;u_3]$ we compute $\cone(\sigma)$:
\begin{equation*}
\ker f\oplus\ker g\xrightarrow{\scalebox{0.85}{$\left[
\begin{smallmatrix}
-k_f&0\\
0 & -k_g\\
\alpha''&-\tau''\\
1 & 0
\end{smallmatrix}
\right]$}}X'\oplus Y'\oplus\ker h\oplus\ker f\xrightarrow{\scalebox{0.85}{$\bigl[
\begin{smallmatrix}
-u_1&-u_2&-u_3&0\\
1&0&0&k_f
\end{smallmatrix}
\bigr]$}}S\oplus X'\xrightarrow{\scalebox{0.85}{$\scalebox{1.005}{$[$}\hspace{1pt}\raisebox{0.75pt}{$\scriptstyle[\hspace{1pt}1\;0\hspace{1pt}]s_1\;\,f\hspace{1pt}$}\scalebox{1.005}{$]$}$}} X.
\end{equation*}
We change the order in the second entry and subtract the two null-homotopic direct summands $\cdots\rightarrow0\rightarrow\ker f\stackrel{\hspace{-3pt}\scriptscriptstyle1}{\rightarrow}\ker f\rightarrow0\rightarrow\cdots$ and $\cdots\rightarrow0\rightarrow X'\stackrel{\hspace{-3pt}\scriptscriptstyle1}{\rightarrow}X'\rightarrow0\rightarrow\cdots$. Establishing acyclicity of $\cone(\sigma)$ is then reduced to considering the sequence
\begin{equation}\label{EQ-Q}
\ker g\xrightarrow{\scalebox{0.95}{$\bigl[
\begin{smallmatrix}
-k_g \\
\tau''
\end{smallmatrix}
\bigr]$}}Y'\oplus\ker h\xrightarrow{\scalebox{0.80}{$[\hspace{1pt}-u_2\;\,u_3\hspace{1pt}]$}}S\xrightarrow{\scalebox{0.80}{$[\hspace{1pt}1\;\,0\hspace{1pt}]s_1$}} X.
\end{equation}
Since $\cone(\tau)$ is acyclic we have the following factorization
\begin{equation}\label{EQ-CONE-TAU}
\begin{tikzcd}[column sep = 1.5em, row sep = 1.2em, ampersand replacement=\&]
 \ker g \ar[rr,>->,"{\bigl[\begin{smallmatrix}-k_g\\\phantom{-}\tau''\end{smallmatrix}\bigr]}"] 
\&\& Y'\oplus\ker h     \ar[rr,"{\bigl[\begin{smallmatrix}-g&0\\\phantom{-}\tau'&k_h\end{smallmatrix}\bigr]}"] \ar[dr,->>,"r"']
\&\& Y\oplus Z' \ar[rr,->>,"{[\hspace{1pt}\tau\;\,h\hspace{1pt}]}"]
\&\& Z. \\
\&\&\&\ker{[\hspace{1pt}\tau\;\,h\hspace{1pt}]}\ar[ur,>->,"{k_{[\tau\,h]}}"']\&\&\&\&
\end{tikzcd}
\end{equation}
Using \eqref{EQ-PB-PRF} we first see that the following diagram is a pullback and employing \eqref{EQ-CONE-TAU} it follows that the map at the top is a deflation:
\begin{equation}\label{EQ-ZETA}
\begin{tikzcd}[column sep = 2.6em, ampersand replacement=\&]
S\ar[r,->>,"{[\hspace{1pt}1\;\,0\hspace{1pt}]s_1}"]\arrow{d}[swap]{\bigl[\begin{smallmatrix}[\hspace{1pt}0\;\,1\hspace{1pt}]s_1\\s_2\end{smallmatrix}\bigr]} \& X \arrow{d}{\alpha}\\[4pt]
Y\oplus Z' \ar[r,->>,swap, "{\hspace{-5pt}[\hspace{1pt}\tau\;\,h\hspace{1pt}]}"] \& Z.
\end{tikzcd}
\end{equation}
As parallel maps in a pullback have isomorphic kernels, we write $\zeta\colon\ker[\hspace{1pt}\tau\;\,h\hspace{1pt}]\rightarrow S$ for the kernel of $[\hspace{1pt}0\;\,1\hspace{1pt}]s_1$ and note that $\bigl[\begin{smallmatrix}[\hspace{1pt}1\;\,0\hspace{1pt}]s_1\\s_2\end{smallmatrix}\bigr]\zeta=k_{[\tau\,h]}$ holds. We obtain the following factorization of \eqref{EQ-Q}
\begin{equation}\label{EQ-FAC-Q}
\begin{tikzcd}[column sep = 1.5em, row sep = 1.2em, ampersand replacement=\&]
 \ker g \ar[rr,>->,"{\bigl[\begin{smallmatrix}-k_g\\\phantom{-}\tau''\end{smallmatrix}\bigr]}"] 
\&\& Y'\oplus\ker h     \ar[rr,"{[-u_2\;u_3]}"] \ar[dr,->>,"r"']
\&\& S \ar[rr,->>,"{\phantom{!!}[\hspace{1pt}1\;\,0\hspace{1pt}]s_1\phantom{!!!!}}"]
\&\& X, \\
\&\&\&\ker{[\hspace{1pt}\tau\;\,h\hspace{1pt}]}\ar[ur,>->,"{\zeta}"']\&\&\&\&
\end{tikzcd}
\end{equation}
where only the commutativity needs still to be checked. In order to do so we use \eqref{EQ-PB-PRF} and in the last step also the commutativity of \eqref{EQ-CONE-TAU} to obtain
\begin{equation}\label{DOUBLE}
\begin{aligned}
[\hspace{1pt}1\;\,0\hspace{1pt}] s_1 [\hspace{1pt}-u_2\;\,u_3\hspace{1pt}] &= \bigl[ -[\hspace{1pt}1\;\,0\hspace{1pt}]s_1u_2\;\;[\hspace{1pt}1\;\,0\hspace{1pt}]s_1u_3\bigr]=\bigl[ -[\hspace{1pt}1\;\,0\hspace{1pt}]\bigl[\begin{smallmatrix}0\\g\end{smallmatrix}\bigr]\;\;[\hspace{1pt}1\;\,0\hspace{1pt}]\bigl[\begin{smallmatrix}0\\0\end{smallmatrix}\bigr]\bigr] = [\hspace{1pt}0\;\;0\hspace{1pt}],\\[2pt]
\bigl[\begin{smallmatrix}[\hspace{1pt}0\;\,1\hspace{1pt}]s_1\\s_2\end{smallmatrix}\bigr][\hspace{1pt}-u_2\;\;u_3\hspace{1pt}] & =  \Bigl[\begin{smallmatrix} -[\hspace{1pt}0\;\,1\hspace{1pt}]s_1u_2 & [\hspace{1pt}0\;\,1\hspace{1pt}]s_1u_3\\-s_2u_2 & s_2u_3\end{smallmatrix}\Bigr] = \Bigl[\begin{smallmatrix} -g & 0\\\phantom{-}\tau' & k_h\end{smallmatrix}\Bigr] = k_{[\tau\,h]}r.
\end{aligned}
\end{equation}
Using the first equation in \eqref{DOUBLE} and the universal property of $\zeta=\ker([\hspace{1pt}1\;\,0\hspace{1pt}]s_1)$ we get a unique map $\xi\colon Y'\oplus\ker h\rightarrow \ker[\hspace{1pt}\tau\;\,h\hspace{1pt}]$ such that $\zeta\xi=[\hspace{1pt}-u_2\;\,u_3\hspace{1pt}]$. Employing first what we noted after \eqref{EQ-ZETA}, then the former equality and finally the second equation in \eqref{DOUBLE}, we obtain
\[
k_{[\tau\,h]}\xi = \bigl[\begin{smallmatrix}[\hspace{1pt}0\;\,1\hspace{1pt}]s_1\\s_2\end{smallmatrix}\bigr]\zeta\xi = \bigl[\begin{smallmatrix}[\hspace{1pt}0\;\,1\hspace{1pt}]s_1\\s_2\end{smallmatrix}\bigr][\hspace{1pt}-u_2\;\,u_3\hspace{1pt}]=k_{[\tau\,h]}r
\]
which, as $k_{[\tau\,h]}$ is monic, implies $\xi=r$. Thus the commutativity of \eqref{EQ-CONE-TAU} yields the commutativity of \eqref{EQ-FAC-Q} and finishes the proof of the first part of (MS2).

\smallskip

\textcircled{2} Now we proceed with the second part of (MS2). Let for this  $\tau\colon Z_h\rightarrow X_f$ and $\alpha\colon Z_h\rightarrow Y_g$ be morphisms with $\tau\in\mathcal{S}_{\CC}$, i.e., we have the commutative diagram
\[
\begin{tikzcd}
\ker f\ar[r,hook,"k_f"]& X'\arrow{r}{f} & X \\[4pt]
\ker h\ar[r,hook,"k_h"]\ar[d,swap,"{\alpha''}"]\ar[u,"\tau''"]& Z' \arrow{r}{h}\arrow{u}{\tau'}\ar[d,swap,"{\alpha'}"] & Z\ar[u,swap,"\tau"]\ar[d,"{\alpha}"]\\[4pt]
\ker g\ar[r,hook,swap,"k_g"]& Y' \arrow{r}[swap]{g} & Y.
\end{tikzcd}
\]
We define $S:=X\oplus Y$, $S':=Z\oplus X'\oplus Y'$ and $u:=\bigl[\begin{smallmatrix}\phantom{-}\tau&f&0\\-\alpha&0&g\end{smallmatrix}\bigr]\colon S'\rightarrow S$, denote by $k_u=[\hspace{1pt}k_1\;k_2\;k_3\hspace{1pt}]^{\T}\colon\ker u\rightarrow S'$ the kernel of $u$ and get an object $S_u\in\hMork(\mathcal{A})$. Next we define maps $\sigma\colon Y_g\rightarrow S_u$ and $\beta\colon X_f\rightarrow S_u$ 
\[
\begin{tikzcd}
\ker f\ar[r,hook,"k_f"]\ar[d,swap,"\beta''"]& X'\arrow{r}{f}\arrow{d}[swap]{\beta'} & X\ar[d,"\beta"]\\[4pt]
\ker u\ar[r,hook,"k_u"]& S' \arrow{r}{u} & S \\[4pt]
\ker g\ar[r,hook,swap,"k_g"]\ar[u,"{\sigma''}"]& Y'\ar[u,"{\sigma'}"] \arrow{r}[swap]{g} & Y\ar[u,swap,"{\sigma}"]
\end{tikzcd}
\]
by letting $\sigma,\sigma',\beta,\beta'$ be the natural inclusions and $\sigma''$, $\beta''$ be the maps induced by the universal property of $\ker u$. We see that the following square
\begin{equation*}
\begin{tikzcd}
Z_h\arrow{r}{\alpha}\arrow{d}[swap]{\tau} & Y_g \arrow{d}{\sigma}\\[4pt]
X_f \arrow{r}[swap]{\beta} & S_u
\end{tikzcd}
\end{equation*}
is commutative in $\hMork(\mathcal{A})$ by using (ii) with $\rho:=[\hspace{1pt}1\;0\;0\hspace{1pt}]^{\T}\colon Z\rightarrow S'$ to conclude that $\beta\tau-\sigma\alpha$ is null-homotopic. As in the previous part we calculate $\cone(\sigma)$ and subtract two null-homotopic complexes leading to
\begin{equation}\label{EQ-Q2}
\ker g\xrightarrow{\sigma''}\ker u\xrightarrow{\bigl[\begin{smallmatrix}k_1\\k_2\end{smallmatrix}\bigr]}Z\oplus X'\xrightarrow{[\hspace{1pt}\tau\;\,f\hspace{1pt}]} X
\end{equation}
which we have to show is acyclic. Since $\cone(\tau)$ is acyclic, we get
\begin{equation}\label{EQ-CONE-TAU-2}
\begin{tikzcd}[column sep = 1.5em, row sep = 1.2em, ampersand replacement=\&]
 \ker h \ar[rr,>->,"{\bigl[\begin{smallmatrix}-k_h\\\phantom{-}\tau''\end{smallmatrix}\bigr]}"] 
\&\& Z'\oplus\ker f     \ar[rr,"{\bigl[\begin{smallmatrix}-h&0\\\phantom{-}\tau'&k_f\end{smallmatrix}\bigr]}"] \ar[dr,->>,"r"']
\&\& Z\oplus X' \ar[rr,->>,"{[\hspace{1pt}\tau\;\,f\hspace{1pt}]}"]
\&\& X. \\
\&\&\&\ker{[\hspace{1pt}\tau\;\,f\hspace{1pt}]}\ar[ur,>->,"{k_{[\tau\,f]}}"']\&\&\&\&
\end{tikzcd}
\end{equation}
Next, we compute
\[
0=[\hspace{1pt}1\;\,0\hspace{1pt}]\bigl[\begin{smallmatrix}0\\0\end{smallmatrix}\bigr]=[\hspace{1pt}1\;\,0\hspace{1pt}]uk_u=[\hspace{1pt}1\;\,0\hspace{1pt}]\bigl[\begin{smallmatrix}\phantom{-}\tau&f&0\\-\alpha&0&g\end{smallmatrix}\bigr]\Bigl[\begin{smallmatrix}k_1\\k_2\\k_3\end{smallmatrix}\Bigr]=[\hspace{1pt}\tau\;\,f\hspace{1pt}]\bigl[\begin{smallmatrix}k_1\\k_2\end{smallmatrix}\bigr]\;\text{ and }\;u\Bigl[\begin{smallmatrix}-h&0\\\phantom{-}\tau'&k_f\\[-1pt]-\alpha'&0\end{smallmatrix}\Bigr]=0.
\]
From the first equation and the universal property of the kernel of $[\hspace{1pt}\tau\;\,f\hspace{1pt}]$ we get a map $\zeta\colon\ker u\rightarrow \ker[\hspace{1pt}\tau\;\,f\hspace{1pt}]$ with $k_{[\tau\,f]}\zeta=\bigl[\begin{smallmatrix}k_1\\k_2\end{smallmatrix}\bigr]$. From the second equation and the universal property of the kernel of $u$ we get a map $v=[\hspace{1pt}v_1\;\,v_2\hspace{1pt}]\colon Z'\oplus\ker f\rightarrow\ker u$ with
\[
\Bigl[\begin{smallmatrix}-h&0\\\phantom{-}\tau'&k_f\\[-1pt]-\alpha'&0\end{smallmatrix}\Bigr]=k_uv=\Bigl[\begin{smallmatrix}k_1v_1 & k_1v_2\\k_2v_1&k_2v_2\end{smallmatrix}\Bigr]=\Bigl[\begin{smallmatrix}k_1\\k_2\\k_3\end{smallmatrix}\Bigr][\hspace{1pt}v_1\;\,v_2\hspace{1pt}]=\Bigl[\begin{smallmatrix}k_1v_1 & k_1v_2\\k_2v_1&k_2v_2\\k_3v_1&k_3v_2\end{smallmatrix}\Bigr].
\]
Using the commutativity of \eqref{EQ-CONE-TAU-2} and the above it follows
\[
k_{[\tau\,f]}r=\bigl[\begin{smallmatrix}-h&0\\\phantom{-}\tau'&k_f\end{smallmatrix}\bigr]=\bigl[\begin{smallmatrix}k_1\\k_2\end{smallmatrix}\bigr][\hspace{1pt}v_1\;\,v_2\hspace{1pt}]=k_{[\tau\,f]}\zeta v
\]
and since $k_{[\tau\,f]}$ is monic we get $r=\zeta v$ and thus $\zeta v$ is a deflation by \eqref{EQ-CONE-TAU-2}. Applying \hypref{R3} we obtain that $\zeta$ is a deflation, too. We claim $\sigma''=\ker\zeta$. Firstly, $k_{[\tau\,f]}\zeta\sigma''=\bigl[\begin{smallmatrix}k_1\\k_2\end{smallmatrix}\bigr]\sigma''=0$ in view of \eqref{EQ-Q2}. Let next $t\colon T\rightarrow\ker u$ be such that $\zeta t=0$. Then
\begin{equation}\label{EQ-3}
 0=k_{[\tau\,f]}\zeta t = \bigl[\begin{smallmatrix}k_1t\\k_2t\end{smallmatrix}\bigr]\;\;\text{ and (thus)}\;\;0=uk_ut=\bigl[\begin{smallmatrix}\phantom{-}\tau{}k_1t+fk_2t\\-\alpha{}k_1t+gk_3t\end{smallmatrix}\bigr]=\bigl[\begin{smallmatrix}0\\gk_3t\end{smallmatrix}\bigr]
\end{equation}
which yields $gk_3t=0$. By the universal property of the kernel of $g$ we get a map $\mu\colon T\rightarrow\ker g$ with $k_3t=k_g\mu$ and employing the first equality in \eqref{EQ-3} we derive
\[
k_u\sigma''\mu=\sigma'k_g\mu=\sigma'k_3t=\Bigl[\begin{smallmatrix}0\\0\\1\end{smallmatrix}\Bigr]k_3t=\Bigl[\begin{smallmatrix}k_1t\\k_2t\\k_3t\end{smallmatrix}\Bigr]=k_ut,
\]
which means $\sigma''\mu=t$ as $k_u$ is monic. As $\sigma'$ is an inclusion we get that $\sigma'k_g=k_u\circ\sigma''$ is monic from whence it follows that $\sigma''$ is monic. This implies that the map $\mu$ is unique and we have established that $\sigma''=\ker\zeta$ holds. Coming back to \eqref{EQ-Q2} we obtain
\begin{equation}\label{EQ-Q2a}
\begin{tikzcd}[column sep = 1.5em, row sep = 1.2em]
\ker g\ar[rr,>->,"{\sigma''}"]&&\ker u\ar[rd,->>,"{\zeta}"']\ar[rr,"{\bigl[\begin{smallmatrix}k_1\\k_2\end{smallmatrix}\bigr]}"]&&Z\oplus X'\ar[rr,->>,"{[\hspace{1pt}\tau\;\,f\hspace{1pt}]}"]&&X\\
&&&\ker{[\hspace{1pt}\tau\;\,f\hspace{1pt}]}\ar[ur,>->,"{k_{[\tau\,f]}}"']&&&&
\end{tikzcd}
\end{equation}
of which we this time showed already the commutativity and whence may conclude that $\cone(\sigma)$ is acyclic.

\smallskip

\textcircled{3} We continue with the first implication of (MS3). Let $\alpha,\beta\colon X_f\rightarrow Y_g$ and $\sigma\colon Z_h\rightarrow X_f$ be morphisms such that $\cone(\sigma)$ is acyclic and $\alpha\sigma=\beta\sigma$ holds in $\hMork(\mathcal{A})$. Consider $\delta:=\alpha-\beta\colon X_f\rightarrow Y_g$, define $U:=Y$, $U':=X\oplus Y'$ and $s:=[\hspace{1pt}\delta\;\,g\hspace{1pt}]\colon U'\rightarrow U$. Then we get a morphism $\tau\colon Y_g\rightarrow U_s$
\[
\begin{tikzcd}
\ker f\ar[r,hook,"k_f"]\ar[d,swap,"\delta''"]& X'\arrow{r}{f}\arrow{d}[swap]{\delta'} & X\ar[d,"\delta"] \\[4pt]
\ker g\ar[r,hook,"k_g"]\ar[d,swap,"{\tau''}"]& Y' \arrow{r}{g}\ar[d,swap,"{\tau'}"]  & Y\ar[d,"{\tau}"]\\[4pt]
\ker s\ar[r,hook,swap,"k_s"]& U'\arrow{r}[swap]{s} & U
\end{tikzcd}
\]
where $\tau$ is the identity, $\tau'$ is the natural inclusion and $\tau''$ is the induced map. Using (ii) with $\rho=\bigl[\begin{smallmatrix}1\\0\end{smallmatrix}\bigr]\colon X\rightarrow X\oplus Y'$ we see that $\tau\delta$ is null-homotopic and thus $\tau\alpha=\tau\beta$ in $\hMork(\mathcal{A})$. To see that $\cone(\tau)$ is acyclic we calculate $\cone(\delta)$
\begin{equation}\label{EQ-CONE-DELTA}
\ker f\xrightarrow{\bigl[\begin{smallmatrix}-k_f\\\phantom{-}\delta''\end{smallmatrix}\bigr]}X'\oplus\ker g\xrightarrow{\bigl[\begin{smallmatrix}-f&0\\\phantom{-}\delta'&k_g\end{smallmatrix}\bigr]}X\oplus{}Y'\xrightarrow{[\hspace{1pt}\delta\;\,g\hspace{1pt}]} Y.
\end{equation}
Applying \cite[Thm IV.4.11(a)]{GM} to the distinguished triangle $X_f\stackrel{\scriptscriptstyle\delta}{\rightarrow} Y_g\stackrel{\scriptscriptstyle\gamma}{\rightarrow}\cone(\delta)\rightarrow\Sigma X_f$, where $\gamma$ is the natural map, we get the exact sequence
\[
\operatorname{LH}^0(X_f)\xrightarrow{\operatorname{LH}^0(\delta)}\operatorname{LH}^0(Y_g)\xrightarrow{\operatorname{LH}^0(\gamma)}\operatorname{LH}^0(\cone(\delta))\longrightarrow\operatorname{LH}^1(X_f).
\]
Employing \eqref{EQ-LH} we see that $\operatorname{LH}^1(X_f)=\cdots\rightarrow0\rightarrow X\rightarrow X\rightarrow 0\rightarrow\cdots$, which is zero in $\mathcal{LH}(\mathcal{A},\CC)$. Moreover we see $\operatorname{LH}^0(X_f)=X_f$, $\operatorname{LH}^0(Y_g)=Y_g$ and $\operatorname{LH}^0(\delta)=\delta$. Further we compute $\operatorname{LH}^0(\cone(\delta))=U_s$ and $\operatorname{LH}^0(\gamma)=\tau$. Therefore, the above is in fact
\[
X_f\xrightarrow{\delta}Y_g\xrightarrow{\tau}U_s\rightarrow0,
\]
whose exactness yields $\tau=\cok\delta$ in the abelian category $\mathcal{LH}(\mathcal{A},\CC)$. By our assumptions at the beginning we have $\delta\sigma=(\alpha-\beta)\sigma=0$ in $\hMork(\mathcal{A})$ and thus also in $\K(\mathcal{A})$. It follows $0=\delta\sigma$ in $\Der(\mathcal{A},\CC)$. Since $\cone(\sigma)$ is acyclic, $\sigma$ is invertible in $\Der(\mathcal{A},\CC)$ and the previous equation gives us $\delta=0$ in $\Der(\mathcal{A},\CC)$ and thus also in $\mathcal{LH}(\mathcal{A},\CC)$. Consequently, $\tau=\cok\delta=\cok0$ is an isomorphism in $\mathcal{LH}(\mathcal{A},\CC)$ and, due to the thickness of $\Ac(\mathcal{A},\CC)$, $\cone(\tau)$ is acyclic. 

\smallskip

\textcircled{4} It remains to check the backwards direction of (MS3). For this, let $\alpha,\beta\colon X_f\rightarrow Y_g$ and $\tau\colon Y_g\rightarrow U_s$ with $\cone(\tau)$ acyclic and $\tau\alpha=\tau\beta$ in $\hMork(\mathcal{A})$. Put $\delta:=\alpha-\beta$. Then $\tau\delta$ is null-homotopic, i.e., there exists $\rho\colon X\rightarrow U'$ with $\tau\delta=s\rho$. The complex $\cone(\tau)$ being acyclic means that we have the factorization
\begin{equation}\label{EQ-CONE-TAU-3}
\begin{tikzcd}[column sep = 1.5em, row sep = 1.2em, ampersand replacement=\&]
 \ker g \ar[rr,>->,"{\bigl[\begin{smallmatrix}-k_g\\\phantom{-}\tau''\end{smallmatrix}\bigr]}"] 
\&\& Y'\oplus\ker s     \ar[rr,"{\bigl[\begin{smallmatrix}-g&0\\\phantom{-}\tau'&k_s\end{smallmatrix}\bigr]}"] \ar[dr,->>,"r"']
\&\& Y\oplus U' \ar[rr,->>,"{[\hspace{1pt}\tau\;\,s\hspace{1pt}]}"]
\&\& U. \\
\&\&\&\ker{[\hspace{1pt}\tau\;\,s\hspace{1pt}]}\ar[ur,>->,"{k_{[\tau\,s]}}"']\&\&\&\&
\end{tikzcd}
\end{equation}
By the above we have $[\hspace{1pt}\tau\;\,s\hspace{1pt}]\bigl[\begin{smallmatrix}\phantom{-}\delta\\-\rho\end{smallmatrix}\bigr]=\tau\delta-s\rho=0$ and may use the universal property of the kernel of $[\hspace{1pt}\tau\;\,s\hspace{1pt}]$ to obtain a unique map $\zeta\colon X\rightarrow\ker[\hspace{1pt}\tau\;\,s\hspace{1pt}]$ with $k_{[\tau\,s]}\zeta=\bigl[\begin{smallmatrix}\phantom{-}\delta\\-\rho\end{smallmatrix}\bigr]$. Next we form the pullback of $r$ along $\zeta$ and obtain by \hypref{R2} a deflation $\sigma$ and a map $t=\bigl[\begin{smallmatrix}t_1\\t_2\end{smallmatrix}\bigr]$:
\begin{equation}\label{PB-XI}
\begin{tikzcd}[column sep=2em]
Z\ar[d,swap,"t"]\ar[r,->>,"\sigma"]\commutes[\mathrm{PB}]{dr} & X \ar[d,"\zeta"]\\[4pt]
Y'\oplus\ker s \ar[r,->>,swap,"r"] & \ker[\hspace{1pt}\tau\;\,s\hspace{1pt}].
\end{tikzcd}
\end{equation}
To complete the definition of $\sigma\colon Z_h\rightarrow X_f$ we form the pullback of $f$ along $\sigma$ and let $\sigma''$ be the natural map\,---\,which is an isomorphism due to the pullback property
\begin{equation}\label{PB-XI-2}
\begin{tikzcd}
\ker h\ar[r,hook,"k_h"]\ar[d,swap,"\sigma''"]\ar[d,,"\sim"]&Z'\ar[d,->>,swap,"\sigma'"]\ar[r,"h"]\commutes[\mathrm{PB}]{dr} & Z \ar[d,->>,"\sigma"]\\[4pt]
\ker f\ar[r,hook,swap,"k_f"]&X' \ar[r,->,swap,"f"] & X.
\end{tikzcd}
\end{equation}
To show that $\delta\sigma$ is null-homotopic we use (ii) with $\rho:=-t_1$. Then \eqref{EQ-CONE-TAU-3}, \eqref{PB-XI} and the defining property of $\zeta$ yield
\[
\bigl[\begin{smallmatrix}-gt_1\\\tau't_1+k_st_2\end{smallmatrix}\bigr]=\bigl[\begin{smallmatrix}-g&0\\\phantom{-}\tau'&k_s\end{smallmatrix}\bigr]\bigl[\begin{smallmatrix}t_1\\t_2\end{smallmatrix}\bigr]=k_{[\tau\,s]}rt=k_{[\tau\,s]}\zeta\sigma=\bigl[\begin{smallmatrix}\delta\\-\rho\end{smallmatrix}\bigr]\sigma=\bigl[\begin{smallmatrix}\phantom{-}\delta\sigma\\-\rho\sigma\end{smallmatrix}\bigr]
\]
and thus in particular $g(-t_1)=\delta\sigma$. It remains to show that $\cone(\sigma)$ is acyclic. Calculating the cone and subtracting the null-homotopic complex $\cdots\rightarrow0\rightarrow\ker h\stackrel{\hspace{-3pt}\scriptscriptstyle\sim}{\rightarrow}\ker f\rightarrow 0\rightarrow\cdots$ we get
\[
Z'\xrightarrow{\bigl[\begin{smallmatrix}-h\\\phantom{-}\sigma'\end{smallmatrix}\bigr]}Z\oplus X'\xrightarrow{[\hspace{1pt}\sigma\;\,f\hspace{1pt}]}X
\]
which is the squarefold of the pullback in \eqref{PB-XI-2}. Using \cite[dual of Prop 5.7]{BC13} or \cite[Prop 3.7(2)]{HR19a} yields that the sequence is a conflation which finishes the proof of (MS3).

\medskip

Finally we consider the functor $\psi\colon\hMork(\mathcal{A})[\mathcal{S}_{\CC}^{-1}]\rightarrow\mathcal{LH}(\mathcal{A},\CC)$ that extends objects of the category $\hMork(\mathcal{A})$ by zero to complexes, and fractions of maps  by zero to fractions of chain maps. By our previous steps, $\psi$ is well-defined and essentially surjective.

\smallskip

We prove fullness using left fractions. Let thus $X_f\stackrel{\hspace{-3.5pt}\alpha}{\longrightarrow}Z^{\bullet}\stackrel{\sigma\hspace{-2pt}}{\longleftarrow}Y_g$ with $X_f,Y_g\in\hMork(\mathcal{A})$, $Z^{\bullet}\in\C(\mathcal{A})$ and chain maps $\alpha,\sigma$ with $\sigma\in\mathcal{N}(\mathcal{A},\CC)$ be a morphism in the left heart. Denoting by $p$ and $i$ the natural maps we obtain the commutative diagram
\begin{equation*}
\begin{tikzcd}[column sep=22pt, row sep=24pt]
& Z^{\bullet}\ar[d,"p"] & \\
X_f\ar[r,swap, "p\alpha"]\ar[ru,"\alpha"]\ar[rd,swap,"\operatorname{LH}^0(\alpha)"]&\tau_{\operatorname{L}}^{\scriptscriptstyle\geqslant0}Z^{\bullet}&\ar[l,"p\sigma"]\ar[ld, "\operatorname{LH}^0(\sigma)"]\ar[lu,swap,"\sigma"]Y_g\\
&\operatorname{LH}^0(Z^{\bullet})\ar[u,swap,"i"]
\end{tikzcd}
\end{equation*}
in which $\operatorname{LH}^0(Z^{\bullet})=(\ker d_Z^{\hspace{1pt}-1}\hookrightarrow Z^{-1}\rightarrow\ker d_Z^{\hspace{1pt}0})\in\hMork(\mathcal{A})$, $\operatorname{LH}^0(\sigma)\in\mathcal{S}_{\CC}$ and thus the bottom represents a roof in $\hMork(\mathcal{A})[\mathcal{S}_{\CC}^{-1}]$. Since $\sigma\in\mathcal{N}(\mathcal{A},\CC)$, we have $Z^{\bullet}\in\mathcal{LH}(\mathcal{A},\CC)\subseteq\Der^{\geqslant0}_{\operatorname{L}}(\mathcal{A},\CC)$ and thus the map $p$ is a quasiisomorphism.  It follows $p\sigma\in\mathcal{N}(\mathcal{A},\CC)$ which establishes that $\sigma^{-1}\alpha$ has a preimage under $\psi$.

\smallskip

For faithfulness assume that the roof $X_f\stackrel{\hspace{-3.5pt}\alpha}{\longrightarrow}Z_h\stackrel{\sigma\hspace{-2pt}}{\longleftarrow}Y_g$ is zero in $\mathcal{LH}(\mathcal{A},\CC)$. Then there exists a quasi-isomorphisms $\beta\colon A^{\bullet}\rightarrow X_f$ with $\alpha\beta=0$ in $\K(\mathcal{A})$. Let $(\rho^{n})_{n\in\ZZ}$ be the corresponding homotopy maps, in particular it holds $\alpha\beta^0=\rho^{-1}d_A^{\hspace{1pt}0}+h\rho^0$. Applying $\operatorname{LH}^0$ we get $\operatorname{LH}^0(\beta)\in\mathcal{S}_{\CC}$ and
\[
\begin{tikzcd}
\operatorname{LH}^0(A^{\bullet})\ar[d, "{\operatorname{LH}^0(\beta)}"]&\ker d_A^{\hspace{1pt}-1}\ar[r,hook,"i^{-2}"]\ar[d,dashed,swap]& A^{-1}\arrow{r}{p^{-1}}\arrow{d}[swap]{\beta^{-1}} & \ker d_A^{\hspace{1pt}0}\ar[d,"\beta^{0}i^{-1}"] \\[4pt]
X_f\ar[d,"\alpha"]&\ker f\ar[r,hook,"k_f"]\ar[d,swap,"{\alpha''}"]& X' \arrow{r}{f}\ar[d,swap,"{\alpha'}"]  & X\ar[d,"{\alpha}"]\\[4pt]
Z_h&\ker h\ar[r,hook,swap,"k_h"]& Z'\arrow{r}[swap]{h} & Z
\end{tikzcd}
\]
where the dashed morphism is induced by the universal property of the kernel of $f$. We then define $\rho\colon\ker d_A^{\hspace{1pt}0}\rightarrow Z'$, $\rho:=\rho^0i^{-1}$, and obtain $h\rho=\alpha\beta^0i^{-1}$ showing that $\alpha\operatorname{LH}^0(\beta)=0$ holds in $\hMork(\mathcal{A})$, which establishes that the roof we started with is zero on $\hMork(\mathcal{A})[\mathcal{S}_{\CC}^{-1}]$.

\smallskip

That $\mathcal{S}_{\CC}\subseteq\hMork(\mathcal{A})$ is saturated follows from the equivalence and the fact that $\mathcal{N}(\mathcal{A},\CC)\subseteq\K(\mathcal{A})$ is saturated. 
\end{proof}

\begin{rmk}\label{MS-RMK} We add the following comments on the above proof.\vspace{3pt}
\begin{myitemize}

\item[(i)] Assume that $\mathcal{A}$ is essentially small; compare Remark \ref{SIZE}. The equivalence in Theorem \ref{NEW-LH} and the properties of $\mathcal{S}_{\CC}$ can then also be derived by functor category methods as used in \cite[Section 3.5]{HKRW}: Let $\operatorname{mod}(\mathcal{A})$ be the category of finitely presented contravariant $\operatorname{Ab}$-valued functors and let $\mathbb{Y}\colon\mathcal{A}\rightarrow\operatorname{mod}(\mathcal{A})$, $\mathbb{Y}(X)=\Hom_{\mathcal{A}}(\,\cdot\,,X)$. Observe firstly that
\[
\Gamma\colon\hMork(\mathcal{A})\rightarrow\operatorname{mod}(\mathcal{A}),\;\;X_f\mapsto\cok\mathbb{Y}(f)
\]
is an equivalence. Next, denote by $\bar{\phi}\colon\operatorname{mod}(\mathcal{A})\rightarrow\mathcal{LH}(\mathcal{A},\CC)$ the extension of $\phi$ which happens to be exact, see \cite[Prop 3.9 together with Prop 2.2]{HKRW}. Using additionally \cite[Cor 3.10]{HKRW} one gets the natural isomorphisms
\[
\bar{\phi}\hspace{1pt}\Gamma(X_f)\cong \cok_{\mathcal{LH}(\mathcal{A},\CC)}(f)\cong X_f^{\bullet},
\]
implying that $\bar{\phi}\hspace{1pt}\Gamma$ is naturally isomorphic to $\hMork(\mathcal{A})\rightarrow\mathcal{LH}(\mathcal{A},\CC)$, $X_f\mapsto X_f^{\bullet}$. Let finally $\operatorname{eff}(\mathcal{A},\CC)$ be the subcategory of effaceable functors which is Serre, see \cite[Dfn 3.18 and Prop 3.19]{HKRW}. Observe that the localization functor $\operatorname{mod}\mathcal{A}\rightarrow\operatorname{mod}\mathcal{A}/\operatorname{eff}(\mathcal{A},\CC)$ coincides with $\bar{\phi}$ if we identify $\operatorname{mod}\mathcal{A}/\operatorname{eff}(\mathcal{A},\CC)\cong\mathcal{LH}(\mathcal{A},\CC)$ via \cite[Thm 3.20]{HKRW}. Standard facts about the localization with respect to Serre subcategories yield
\[
\alpha\in\mathcal{S}_{\CC}\,\Longleftrightarrow\, \bar{\phi}\hspace{1pt}\Gamma(\alpha) \text{ is invertible}\;\Longleftrightarrow\;\ker\Gamma(\alpha),\,\cok\Gamma(\alpha)\in\operatorname{eff}(\mathcal{A},\CC)
\]
where $\Sigma_{\operatorname{eff}(\mathcal{A},\CC)}=\{\beta\in\mod(\mathcal{A})\:|\:\ker\beta,\,\cok\beta\in\operatorname{eff}(\mathcal{A},\CC)\}$ is a saturated multiplicative system \cite[Ex 8.12]{KS}. Under the equivalence $\Gamma$ the two classes $\mathcal{S}_{\CC}$ and $\Sigma_{\operatorname{eff}(\mathcal{A},\CC)}$ correspond, from which the claimed properties of $\mathcal{S}_{\CC}$ as well as the equivalence follow.

\vspace{3pt}

\item[(ii)]  Although we used (following \cite{HKRW} and \cite{S}) the unbounded derived category to define the t-structure and the left heart, we could have also done everything inside $\Der^{\bb}(\mathcal{A},\CC)$ from the beginning. Thus, a `minimal' set-theoretic assumption that guarantees that everything so far works out is that $\Der^{\bb}(\mathcal{A},\CC)$ is well-defined or `locally small'.

\vspace{3pt}

\item[(iii)] The direct proof of the axioms (MS2) and (MS3) that we gave above provides explicit formulas necessary to actually calculate with the morphisms of $\mathcal{LH}(\mathcal{A},\CC)$, i.e., for composition and cancellation of fractions.



\vspace{3pt}

\item[(iv)] At the very end of the proof above we used that a morphism $\alpha\colon X_f\rightarrow Y_g$ such that $\alpha''\colon\ker f\rightarrow\ker g$ is an isomorphism belongs to $\mathcal{S}_{\CC}$ iff 
\begin{equation}\label{SQRFLD}
X'\xrightarrow{\bigl[\begin{smallmatrix}-f\\\phantom{-}\alpha'\end{smallmatrix}\bigr]}X\oplus Y'\xrightarrow{[\hspace{1pt}\alpha\;\;g\hspace{1pt}]}Y\in\CC.
\end{equation}
This may be rephrased as the \eqref{C-PUL} being a `$\CC$-pulation', see also Remark \ref{REM}:
\begin{equation}\label{C-PUL}
\begin{tikzcd}[column sep =21pt, row sep =17pt]
X'\arrow{r}{f}\arrow{d}[swap]{\alpha'} & X \arrow{d}{\alpha}\\[4pt]
Y' \arrow{r}[swap]{g} & Y.
\end{tikzcd}
\end{equation}

\item[(v)] If the diagram \eqref{C-PUL} is merely a pullback, the assumption of (iv) is satisfied: $\alpha''$ is an isomorphism. On the other hand, for $\mathcal{A}=\mathsf{Vect}(\KK)$ the morphism
\begin{equation*}
\begin{tikzcd}[column sep =35pt, row sep =20pt, ampersand replacement=\&]
\KK\arrow[hook]{r}{\bigl[\begin{smallmatrix}0\\1\end{smallmatrix}\bigr]}\ar{d}[swap]{0}\& \KK^2\arrow{r}{\bigl[\begin{smallmatrix}1&0\\0&0\end{smallmatrix}\bigr]}\arrow{d}[swap]{0} \& \KK^2\arrow{d}{[\hspace{1pt}0\;\;1\hspace{1pt}]} \\[4pt]
0\ar[r,hook,swap,"0"]\& 0 \arrow{r}[swap]{0} \& \KK
\end{tikzcd}
\end{equation*}
has acyclic cone without that the squarefold of the right hand side, namely the sequence $\KK^2\rightarrow \KK^2\rightarrow\KK$ as in \eqref{SQRFLD}, is a kernel-cokernel pair.
\end{myitemize}
\end{rmk}

Next we will give explicit formulae for the computation of kernels and cokernels in the left heart. Notice that below it is sufficient to calculate kernels and cokernels in $\hMork(\mathcal{A})$ since $\ker(\sigma^{-1}\alpha)\cong\ker\alpha$ and $\cok(\sigma^{-1}\alpha)\cong\cok\alpha$ hold for fractions $\sigma^{-1}\alpha\in\hMork(\mathcal{A})[\mathcal{S}_{\CC}^{-1}]$.

\smallskip

For part (iii) let us note that, using Theorem \ref{NEW-LH}, the embedding $\phi\colon\mathcal{A}\rightarrow\hMork(\mathcal{A})$  sends an object $X$ to $X_0=\bigl(\hspace{1pt}0\rightarrow0\rightarrow X\hspace{1pt}\bigr)$ and a morphism $f\colon X\rightarrow Y$ to $(0,0,f)\colon X_0\rightarrow Y_0$.

\begin{thm}\label{KER-COK-PROP} Let $(\mathcal{A},\CC)$ be a strongly deflation-exact category with kernels and let $\alpha\colon X_f\rightarrow Y_g$ be a morphism that we regard in $\mathcal{LH}(\mathcal{A},\CC)\cong\hMork(\mathcal{A})[\mathcal{S}^{-1}]$. Then\vspace{2pt}

\begin{myitemize}
\item[(i)] The kernel of $\alpha$ is given by
\begin{equation*}
\begin{tikzcd}[column sep =25pt, row sep =16pt, ampersand replacement=\&]
\ker s\arrow[hook]{r}{k_s}\ar{d}[swap]{\gamma''}\& X'\oplus\ker g\arrow{r}{s}\arrow{d}[swap]{\gamma'} \& \ker[\hspace{1pt}\alpha\,-\!g\hspace{1pt}]\arrow{d}{\gamma} \\[4pt]
\ker f\ar[r,hook,swap,"k_f"]\& X' \arrow{r}[swap]{f} \& X
\end{tikzcd}
\end{equation*}
where $U=\ker[\hspace{1pt}\alpha\;-\!g\hspace{1pt}]$ is the pullback of $g$ along $\alpha$, $\gamma=[\hspace{1pt}1\;\,0\hspace{1pt}]k_{[\alpha\,-g]}$ is the corresponding map, $U'=X'\oplus\ker g$, $\gamma'=[\hspace{1pt}1\;\,0\hspace{1pt}]$ and $s$ is the unique map with $\gamma{}s=[\hspace{1pt}f\;\,0\hspace{1pt}]$ and $[\hspace{1pt}0\;1\hspace{1pt}]k_{[\alpha\,-g]}s=[\hspace{1pt}\alpha'\;k_g\hspace{1pt}]$, which exists by the pullback property.
\vspace{2pt}

\item[(ii)] The cokernel of $\alpha$ is given by
\begin{equation*}
\begin{tikzcd}[column sep =30pt, row sep =16pt, ampersand replacement=\&]
\ker g\arrow[hook]{r}{k_g}\ar{d}[swap]{\delta''}\& Y'\arrow{r}{g}\arrow{d}[swap]{\delta'} \& Y\arrow{d}{\delta} \\[4pt]
\ker[\hspace{1pt}\alpha\;\,g\hspace{1pt}]\ar[r,hook,swap,"k_{[\alpha\,g]}"]\& X\oplus Y' \arrow{r}[swap]{[\hspace{1pt}\alpha\;\,g\hspace{1pt}]} \& Y
\end{tikzcd}
\end{equation*}
where $\delta=1$ and $\delta'=\bigl[\begin{smallmatrix}0\\1\end{smallmatrix}\bigr]$.
\vspace{2pt}

\vspace{2pt}

\item[(iii)] For $f\colon X\rightarrow Y$ in $\mathcal{A}$ we obtain as special cases of (i) and (ii):
\begin{equation*}
\begin{aligned}
\ker(X_0\xrightarrow{\scriptscriptstyle(0,0,f)}Y_0)&=\bigl((\ker f)_0\xrightarrow{\scriptscriptstyle(0,0,k_{\scalebox{0.4}{$f$}})}X_0\bigr),\\
 \cok(X_0\xrightarrow{\scriptscriptstyle(0,0,f)}Y_0)&=\bigl(Y_0\xrightarrow{\scriptscriptstyle(0,0,1)}(\ker f\stackrel{\scriptscriptstyle k_{\scalebox{0.4}{$f$}}}{\hookrightarrow} X\stackrel{\scriptscriptstyle f}{\rightarrow}Y)\bigr).
\end{aligned}
\end{equation*}
\end{myitemize} 
\end{thm}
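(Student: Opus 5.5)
The cokernel is the easier part, so I would do (ii) first. One route is to verify the universal property directly in $\hMork(\mathcal{A})[\mathcal{S}_{\CC}^{-1}]$. The shorter route reuses the argument from step \textcircled{3} of the proof of Theorem \ref{NEW-LH}. There, for a morphism $\delta\colon X_f\rightarrow Y_g$, the long exact $\operatorname{LH}$-sequence of the triangle $X_f\rightarrow Y_g\rightarrow\cone(\delta)\rightarrow\Sigma X_f$ gives an exact sequence $X_f\rightarrow Y_g\rightarrow\operatorname{LH}^0(\cone(\delta))\rightarrow 0$, because $\operatorname{LH}^1(X_f)=0$.

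Applying this with $\delta=\alpha$:
\begin{itemize}
\item Reading off $\operatorname{LH}^0(\cone\alpha)$ from \eqref{EQ-LH} and the cone in Theorem \ref{NEW-LH}(iii) gives exactly $\ker[\alpha\;g]\hookrightarrow X\oplus Y'\xrightarrow{[\alpha\;g]}Y$.
\item $\operatorname{LH}^0(\gamma)$ of the natural map $\gamma\colon Y_g\rightarrow\cone(\alpha)$ has components $1$, $\bigl[\begin{smallmatrix}0\\1\end{smallmatrix}\bigr]$ and the induced map on kernels.
\end{itemize}
Exactness in the abelian category $\mathcal{LH}(\mathcal{A},\CC)$ then says this map is $\cok\alpha$. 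The only care needed is the sign convention of the cone. If a sign discrepancy appears, I would compose with the automorphism $-1$ on the $X$-summand, which does not change the object up to isomorphism.

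For (i) I would avoid the dual triangle argument: the natural candidate is $\operatorname{LH}^{-1}(\cone\alpha)$, which need not coincide with the displayed object, since $\operatorname{LH}^{-1}(Y_g)$ is generally nonzero. Instead I would verify directly that the displayed $\gamma\colon U_s\rightarrow X_f$ is a kernel in $\hMork(\mathcal{A})$ and that this persists in the localization. The plan has three steps.
\begin{itemize}
\item[(a)] Check the construction. Show that $s$ exists, that $\gamma$ is a chain map, and that $\alpha\gamma\sim 0$ via $\rho=-[0\;1]k_{[\alpha\,-g]}\colon U\rightarrow Y'$.
\item[(b)] Check the factorization property. Given $\beta\colon W_h\rightarrow X_f$ with $\alpha\beta=g\rho$, produce a factorization through $U_s$. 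The pair $(\beta,\rho)$ maps into the pullback $U$. On the middle term take $[\beta'\;\;\mu]$, where $\mu\colon W'\rightarrow\ker g$ comes from $g(\alpha'\beta'-\rho h)=0$ (up to sign). Then verify uniqueness up to homotopy.
\item[(c)] Pass to the localization. Since $\mathcal{LH}$ is abelian and the localization functor from Theorem \ref{NEW-LH} is an equivalence, it suffices to show that $\operatorname{Hom}(W_h,-)$ applied to $U_s\rightarrow X_f\rightarrow Y_g$ is left exact for all $W_h$, with morphisms computed as fractions. Because every fraction $\sigma^{-1}$ is invertible, one reduces to $\hMork$-level morphisms by composing with roofs $W_h\xleftarrow{\sigma}V_k$.
\end{itemize}

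I expect (c) to be the main obstacle, specifically showing that $\gamma$ is a monomorphism in the localized category. For this I would compute $\ker\gamma$ as a cone. Alternatively, I would show that $\cone(\gamma)\rightarrow\cone(\alpha)$ induces an isomorphism on $\operatorname{LH}^{-1}$, because the truncation of $\cone(\alpha)$ below degree $-1$ agrees with $U_s$ shifted. Concretely, I would identify $U_s$ with $\operatorname{LH}^{0}$ of $\Sigma^{-1}\cone(\alpha)$ modulo the image of $\operatorname{LH}^{-1}(Y_g)$, using the rotated long exact sequence $0\rightarrow\operatorname{LH}^{-1}(\ldots)\rightarrow\cdots$.

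Part (iii) is immediate by substitution: with $f\colon X\rightarrow Y$, $X'=0$ and $Y'=0$, the object in (i) becomes $\ker f$ placed as a stalk, and the object in (ii) becomes $\ker f\hookrightarrow X\rightarrow Y$.
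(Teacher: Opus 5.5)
Your (ii) and (iii) are correct, and (ii) is exactly the paper's argument. The gap is in (i), and it starts with a false claim: $\operatorname{LH}^{-1}(Y_g)$ is \emph{not} generally nonzero. Since $Y_g$ lies in the heart, $\operatorname{LH}^{-1}(Y_g)=0$, for the same reason that $\operatorname{LH}^{1}(X_f)=0$, which you use in (ii). Explicitly, $\operatorname{LH}^{-1}(Y_g)$ is the contractible complex $0\rightarrow\ker g\xrightarrow{\pm1}\ker g$.

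So the long exact sequence of $X_f\xrightarrow{\alpha}Y_g\rightarrow\cone(\alpha)\xrightarrow{\mu}\Sigma X_f$ gives an exact sequence $0\rightarrow\operatorname{LH}^{-1}(\cone(\alpha))\rightarrow X_f\xrightarrow{\alpha}Y_g$. This is how the paper proves (i), and what remains is bookkeeping:
In $\Sigma^{-1}\cone(\alpha)$ the term $X\oplus Y'$ sits in degree $0$. The kernel of $-[\alpha\;\,g]$ is $Dk_{[\alpha\,-g]}$ with $D=\bigl[\begin{smallmatrix}1&\phantom{-}0\\0&-1\end{smallmatrix}\bigr]$. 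The differential $F=\bigl[\begin{smallmatrix}\phantom{-}f&0\\-\alpha'&-k_g\end{smallmatrix}\bigr]$ into degree $0$ factors as $F=Dk_{[\alpha\,-g]}s$ with precisely the $s$ of the statement. Hence $\ker F=\ker s$, since $Dk_{[\alpha\,-g]}$ is monic. Finally, $\operatorname{LH}^0(\Sigma^{-1}\mu)$ has components $\gamma''$, $[1\;\,0]$ and $[1\;\,0]k_{[\alpha\,-g]}$. The displayed object is therefore literally $\operatorname{LH}^{-1}(\cone(\alpha))$, and your fallback ``modulo the image of $\operatorname{LH}^{-1}(Y_g)$'' quotients by zero.

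Your direct route can be made to work, but as written it stops at the step you yourself call the main obstacle. The remedies you suggest for (c) either lead back to the triangle argument you dismissed or are not spelled out. The missing observation is that (c) is formal once (b) is done, by the cancellation axiom (MS3) from the proof of Theorem~\ref{NEW-LH}:
\begin{enumerate}
\item[(1)] Write a morphism $W_h\rightarrow X_f$ of the localization as a right fraction $\beta\sigma^{-1}$.
\item[(2)] If $\alpha\beta\sigma^{-1}=0$, then $\alpha\beta\tau=0$ in $\hMork(\mathcal{A})$ for some $\tau\in\mathcal{S}_{\CC}$. By (b), $\beta\tau=\gamma\epsilon$, hence $\beta\sigma^{-1}=\gamma\,\epsilon(\sigma\tau)^{-1}$.
\item[(3)] Similarly, $\gamma\epsilon\sigma^{-1}=0$ yields $\gamma\epsilon\tau=0$ in $\hMork(\mathcal{A})$, hence $\epsilon\tau=0$ there and $\epsilon\sigma^{-1}=0$.
\end{enumerate}
Step (3) needs $\gamma$ to be monic in $\hMork(\mathcal{A})$, which your ``uniqueness up to homotopy'' should state and prove. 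If $\gamma\epsilon=f\rho$, then $g\bigl([0\;\,1]k_{[\alpha\,-g]}\epsilon-\alpha'\rho\bigr)=0$. So this difference is $k_gz$ for some $z$, and $\epsilon=s\bigl[\begin{smallmatrix}\rho\\z\end{smallmatrix}\bigr]$ is null-homotopic.

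With these additions your route gives a correct, more hands-on proof: $\gamma$ is already a kernel in $\hMork(\mathcal{A})$, and this survives localization. The paper's triangle argument is shorter and reuses the computation from (ii).
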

\begin{proof}(i) We take cohomology of the distinguished triangle  $X_f\xrightarrow{\hspace{-2pt}\scriptscriptstyle\alpha}Y_g\xrightarrow{\hspace{-2pt}\scriptscriptstyle\nu}\cone(\alpha)\xrightarrow{\hspace{-2pt}\scriptscriptstyle\mu}\Sigma X_f$ and obtain the following exact sequence which identifies the kernel of $\alpha$ in $\mathcal{LH}(\mathcal{A},\CC)$:
\[
0\longrightarrow\operatorname{LH}^{-1}(\cone(\alpha))\xrightarrow{\operatorname{LH}^{0}(\Sigma^{-1}\mu)}X_f\xrightarrow{\;\;\alpha\;\;}Y_g.
\]
In order to compute $\operatorname{LH}^{-1}(\cone(\alpha))$ and $\operatorname{LH}^{0}(\Sigma^{-1}\mu)$ explicitly, we first perform a shift by one to the right on $\mu=(1,[1\;0],[1\;0],0)\colon\cone(\alpha)\rightarrow\Sigma X_f$. With suitable sign changes and setting $D:=\bigl[\begin{smallmatrix}1&\phantom{-}0\\0&-1\end{smallmatrix}\bigr]$ we get the solid part of the following diagram with $X\oplus Y'$ in degree zero and $Dk_{[\alpha\,-g]}$ being the kernel of $-[\hspace{1pt}\alpha\;\,g]\hspace{1pt}$:
\begin{equation*}
\begin{tikzcd}[column sep = 1.5em, row sep = 1.25em, ampersand replacement=\&]
\Sigma^{-1}\cone(\alpha)\ar[dd,swap,"\Sigma^{-1}\mu"]\& \ker f\ar[dd,swap,"1"] \ar[rr,->,"{\bigl[\begin{smallmatrix}\phantom{-}k_f\\-\alpha''\end{smallmatrix}\bigr]}"] 
\&\& X'\oplus\ker g\ar[dd,"{[\hspace{1pt}1\;\,0\hspace{1pt}]}"]\ar[rr,"{F:=\bigl[\begin{smallmatrix}\phantom{-}f&0\\-\alpha'&-k_g\end{smallmatrix}\bigr]}"] \ar[dr,->,"s"']
\&\& X\oplus Y'\ar[dd,"{[\hspace{1pt}1\;\,0\hspace{1pt}]}"] \ar[rr,->,"{-[\hspace{1pt}\alpha\;\,g\hspace{1pt}]}"]
\&\& Y\ar[dd,"0"]\\
\&\&\ker F\ar[ld,dashed, "\gamma''"]\ar[ru,hook, "k_F"'] \&\&\ker{[\hspace{1pt}\alpha\;-\!g\hspace{1pt}]}\ar[rd,dashed,swap,"\gamma"]\ar[ur,hook,"{Dk_{[\alpha\,\sm{}g]}}"']\&\&\&\&\\
X_f\&\ker f\ar[rr,hook,swap,"k_f"] \&\& X'\ar[rr,swap,"f"] \&\& X\ar[rr,swap,"0"] \&\& 0.
\end{tikzcd}
\end{equation*}
We then put $\gamma:=[\hspace{1pt}1\;\,0\hspace{1pt}]k_{[\alpha\,-\!g]}$, $\gamma':=[\hspace{1pt}1\;\,0\hspace{1pt}]$ and obtain $\gamma''$ from the universal property of the kernel of $f$. Observing that $k_F=\ker s$ holds finishes this part of the proof.

\smallskip

(ii) We use the same triangle as in (i) and consider the exact sequence
\[
X_f\xrightarrow{\;\;\alpha\;\;}Y_g\xrightarrow{\operatorname{LH}^{0}(\nu)}\operatorname{LH}^{0}(\cone(\alpha))\longrightarrow0
\]
and observe that $\operatorname{LH}^{0}(\cone(\alpha))$ is precisely the second row in (ii) and that $\operatorname{LH}^{0}(\nu)$ coincides with the map $\delta$.
\end{proof}

We have to come back to the notion of acyclicity of a complex over a non-abelian category \emph{in a single degree}. In view of \eqref{EQ-LH} the following is a very natural definition.

\begin{dfn}\label{DFN-AC-N} Let $(\mathcal{A},\CC)$ be strongly deflation-exact with kernels. We say that a cochain complex $C^{\bullet}$ over $\mathcal{A}$ is \emph{$\CC$-acyclic in degree} $n\in\ZZ$ if $(i^{\hspace{0.5pt}n-2},p^{\hspace{0.5pt}n-1})$ is a conflation. This is equivalent to $\operatorname{LH}^n(C^{\bullet})=0$ in $\mathcal{LH}(\mathcal{A},\CC)$.\hfill\diam
\end{dfn}

The above extends the classical situation of an abelian category with the usual cohomology $\operatorname{H}^n$. We emphasize that the definition of acyclicity in a fixed degree used by \cite[Dfn 2.14]{HKRW}, \cite[p.~691]{Keller96} and \cite[Dfn 7.1]{BC13} is \emph{stronger}, cf.~Remark \ref{FIN-RMK}(iii), and that other definitions, e.g., \cite[Dfn 8.8]{Buehler}, \cite[p.~106]{KrauseBuch} or \cite[Dfn 1.3(1)]{LLS26} can be found in the literature. Nevertheless, a complex being acyclic \emph{in all degrees} is invariant under all the definitions just listed.

\smallskip

Let us now strengthen our assumptions and study strongly deflation-exact categories in which every kernel is an inflation. Then the notion of degree-wise acyclicity can be characterized as follows.

\begin{lem}\label{LEM-AC} Let $(\mathcal{A},\CC)$ be deflation-exact with admissible kernels. Then, for $C^{\bullet}\in\emph{\C}(\mathcal{A})$ and $n\in\ZZ$ the following are equivalent:\vspace{3pt}
\begin{myitemize}
\item[(i)] $\operatorname{LH}^n(C^{\bullet})=0$.

\vspace{0pt}

\item[(ii)] $\ker d^{\hspace{1pt}n-1}\stackrel{i^{n-2}}{\longrightarrow}C^{\hspace{0.5pt}n-1}\stackrel{p^{n-1}}{\longrightarrow}\ker d^{\hspace{1pt}n}\in\CC$.

\vspace{3pt}

\item[(iii)] The differential $d^{\hspace{1pt}n-1}$ has image (and coimage) and $\coim d^{\hspace{1pt}n-1} = \im d^{\hspace{1pt}n-1}=\ker d^{\hspace{1pt}n}$.
\vspace{3pt}

\item[(iv)] The map $p^{n-1}\colon C^{\hspace{0.5pt}n-1}\rightarrow \ker d^{\hspace{1pt}n}$ is a coimage of $d^{\hspace{1pt}n-1}$.

\end{myitemize}
\end{lem}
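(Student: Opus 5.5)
The natural route is the cycle (i)$\Leftrightarrow$(ii), (ii)$\Rightarrow$(iv)$\Rightarrow$(iii)$\Rightarrow$(ii). The key tool is Remark \ref{RMK-1}: with admissible kernels the structure is strongly deflation-exact and $\CC$ consists of all kernel-cokernel pairs of $\mathcal{A}$. For $C^{\bullet}$ in degree $n$ we write $d^{\hspace{1pt}n-1}=i^{n-1}p^{n-1}$, where $i^{n-1}=\ker d^{\hspace{1pt}n}$ and $p^{n-1}$ is induced by the kernel property; moreover $i^{n-2}=\ker d^{\hspace{1pt}n-1}=\ker p^{n-1}$ because $i^{n-1}$ is monic.

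The equivalence (i)$\Leftrightarrow$(ii) is exactly Definition \ref{DFN-AC-N}. I would make it precise via \eqref{EQ-LH}: $\operatorname{LH}^n(C^{\bullet})$ is the three-term complex $\ker d^{\hspace{1pt}n-1}\to C^{n-1}\to\ker d^{\hspace{1pt}n}$. By Theorem \ref{RMK-DC}(iii) this complex is $\CC$-acyclic, i.e.\ zero in $\Der(\mathcal{A},\CC)$, iff the sequence is a conflation. For the converse, that zero in $\Der$ forces acyclicity, I would use that $\Ac$ is thick, so isomorphism to $0$ means the complex lies in $\Ac$.

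For (ii)$\Rightarrow$(iv): by Remark \ref{RMK-1}, (ii) says $p^{n-1}=\cok i^{n-2}=\cok\ker d^{\hspace{1pt}n-1}$, which is the definition of $\coim d^{\hspace{1pt}n-1}$. For (iv)$\Rightarrow$(iii): since $\coim d^{\hspace{1pt}n-1}=p^{n-1}$ exists, $d^{\hspace{1pt}n-1}$ factors through it via the monomorphism $i^{n-1}$. I would then show that $i^{n-1}$ is the image, i.e.\ $\ker\cok d^{\hspace{1pt}n-1}$, even though cokernels need not exist in $\mathcal{A}$. Since $i^{n-1}$ is a kernel, it is admissible, so $(i^{n-1},\cok i^{n-1})\in\CC$ and $\cok i^{n-1}$ exists. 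Because $p^{n-1}$ is a cokernel, hence a deflation and in particular an epimorphism, $\cok d^{\hspace{1pt}n-1}=\cok(i^{n-1}p^{n-1})=\cok i^{n-1}$. Hence $\im d^{\hspace{1pt}n-1}=\ker\cok i^{n-1}=i^{n-1}=\ker d^{\hspace{1pt}n}$, and image and coimage agree. For (iii)$\Rightarrow$(ii): from $\coim d^{\hspace{1pt}n-1}=\ker d^{\hspace{1pt}n}$, compatibly with the factorization, $p^{n-1}$ equals $\cok\ker d^{\hspace{1pt}n-1}=\cok i^{n-2}$, and $i^{n-2}=\ker p^{n-1}$. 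So $(i^{n-2},p^{n-1})$ is a kernel-cokernel pair and lies in $\CC$ by Remark \ref{RMK-1}.

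The main obstacle is making (iii) precise when $\mathcal{A}$ lacks cokernels. The statement $\coim=\im=\ker d^{\hspace{1pt}n}$ has to be read as an identification of morphisms compatible with $d^{\hspace{1pt}n-1}=i^{n-1}p^{n-1}$, and not merely as an isomorphism of objects. I would fix this convention explicitly, namely that the canonical comparison map $\coim\to\im$ and the inclusion $\im\to C^{n}$ coincide with $p^{n-1}$ and $i^{n-1}$ respectively. I would also check carefully that epimorphy of $p^{n-1}$ justifies the identity of cokernels used in (iv)$\Rightarrow$(iii).
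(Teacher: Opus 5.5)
Your proposal is correct and uses the same ingredients as the paper's proof: admissibility of $\ker d^{\hspace{1pt}n}$ gives $\cok i^{n-1}$, epimorphy of $p^{n-1}$ identifies it with $\cok d^{\hspace{1pt}n-1}$, and admissibility of $\ker d^{\hspace{1pt}n-1}$ puts $(i^{n-2},p^{n-1})$ in $\CC$ once $p^{n-1}$ is a coimage; you only run the cycle as (ii)$\Rightarrow$(iv)$\Rightarrow$(iii)$\Rightarrow$(ii) instead of the paper's (ii)$\Rightarrow$(iii)$\Rightarrow$(iv)$\Rightarrow$(ii). Reading (iii) as a compatible identification is exactly what the paper does when it calls (iii)$\Rightarrow$(iv) a specialization, but it is the coimage map $C^{n-1}\rightarrow\coim d^{\hspace{1pt}n-1}$ that should coincide with $p^{n-1}$, not the comparison map $\coim\rightarrow\im$ (which is the identity).
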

\begin{proof} (i)\;$\Leftrightarrow$\:(ii) holds by \eqref{EQ-LH}.

\vspace{3pt}

Before we continue let us mention that our general assumptions imply that every morphism $f\colon X\rightarrow Y$ in $\mathcal{A}$ has a coimage and that the coimage map $c\colon X\rightarrow\cok\ker f$ is a deflation.

\vspace{3pt}

(ii)\;$\Rightarrow$\:(iii) We have $p^{n-1}=\cok i^{\hspace{0.5pt}n-2} =\cok \ker d^{\hspace{1pt}n-1}$ which shows that $p^{n-1}\colon C^{\hspace{0.5pt}n-1}\rightarrow\ker d^{\hspace{1pt}n}$ is the coimage of $d^{\hspace{1pt}n-1}$. The kernel $i^{\hspace{0.5pt}n-1}\colon \ker d^{\hspace{1pt}n}\rightarrow C^{\hspace{0.5pt}n}$ is admissible by our general assumptions and thus has a cokernel $q$. Using that $p^{n-1}$ is epic we see that $q=\cok d^{\hspace{1pt}n-1}$, hence $i^{\hspace{0.5pt}n-1}=\ker q = \im d^{\hspace{1pt}n-1}$.

\vspace{3pt}

(iii)\;$\Rightarrow$\:(iv) Specialization.

\vspace{2pt}

(iv)\;$\Rightarrow$\:(ii) By the general assumptions we have $\ker d^{\hspace{1pt}n-1}\stackrel{i^{n-2}}{\longrightarrow}C^{n-1}\stackrel{\!c}{\longrightarrow}\coim d^{\hspace{1pt}n-1}\in\CC$. Due to (iv) this is the same sequence as in (ii).
\end{proof}
 
A morphism for which coimage and image exist and coincide is often called \emph{strict}; condition (iii) above is thus precisely how \cite[Dfns 1.2.10 and 1.1.4]{S} defined the notion of a complex being  \emph{strictly exact in degree $n$} for quasiabelian categories. If, on the other hand, in $(\mathcal{A},\CC)$ there exists a kernel-cokernel pair $X\rightarrow Y\rightarrow Z$  which is not a conflation, then $\cdots\rightarrow 0\rightarrow Y\rightarrow Z\rightarrow0\rightarrow\cdots$ satisfies (iii) in the degree where $Z$ is located but not (ii).

\smallskip

Under the strengthened assumption that $(\mathcal{A},\CC)$ has admissible kernels, the cohomology functor $\operatorname{LH}^n\colon\Der(\mathcal{A},\CC)\rightarrow\mathcal{LH}(\mathcal{A},\CC)$ looks like
\begin{equation}
\operatorname{LH}^n(C^{\bullet})=\cdots\rightarrow0\rightarrow\coim d^{\hspace{1pt}n-1}\hookrightarrow\ker d^{\hspace{1pt}n}\rightarrow0\rightarrow\cdots
\end{equation}
with $\ker d^{\hspace{1pt}n}$ in degree $0$. Every object in $\mathcal{LH}(\mathcal{A},\CC)$ is isomorphic to a complex of the form $\cdots\rightarrow0\rightarrow X'\stackrel{\scriptscriptstyle f}{\hookrightarrow}X\rightarrow 0\rightarrow\cdots$ with $X$ in degree $0$, see \cite[Prop 5.1]{HKRW}, and the properties of the embedding $\phi\colon(\mathcal{A},\CC)\rightarrow\mathcal{LH}(\mathcal{A},\CC)$ improve as follows.

\begin{thm}\label{PHI-IMPROV}\cite[Prop 5.2]{HKRW} Let $(\mathcal{A},\CC)$ be a deflation-exact category with admissible kernels and read $\mathcal{A}\subseteq\mathcal{LH}(\mathcal{A},\CC)$ via the embedding $\phi$.\vspace{2pt}
\begin{myitemize}
\item[(i)] The category $\mathcal{A}\subseteq\mathcal{LH}(\mathcal{A},\CC)$ is closed under subobjects; more precisely if $X\hookrightarrow Z$ is a monomorphism in $\mathcal{LH}(\mathcal{A},\CC)$ with $Z\in\mathcal{A}$, then $X\in\mathcal{A}$, too.

\vspace{2pt} 

\item[(ii)] For every $Z\in\mathcal{LH}(\mathcal{A},\CC)$ there exists a short exact sequence $X\rightarrow Y\rightarrow Z$ in $\mathcal{LH}(\mathcal{A},\CC)$ with $X,Y\in\mathcal{A}$. \hfill\diam{}
\end{myitemize}
\end{thm}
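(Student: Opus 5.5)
For (i), I would start from a monomorphism $\iota\colon X\hookrightarrow\phi(Z)$ in $\mathcal{LH}(\mathcal{A},\CC)$ and use the explicit description $\mathcal{LH}(\mathcal{A},\CC)\cong\hMork(\mathcal{A})[\mathcal{S}_{\CC}^{-1}]$ from Theorem \ref{NEW-LH}. Writing $\iota$ as a left fraction $\sigma^{-1}\alpha$ with $\sigma\in\mathcal{S}_{\CC}$, the map $\alpha$ is again a monomorphism in the heart, since $\sigma$ is invertible there. Hence $X\cong\ker(\cok\alpha)$. I would therefore compute the cokernel of $\alpha\colon X_f\rightarrow Z_h$ by Theorem \ref{KER-COK-PROP}(ii), and then its kernel by Theorem \ref{KER-COK-PROP}(i).

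A cleaner route is to reduce to the target $Z_0=(0\rightarrow0\rightarrow Z)$, which is possible since $\sigma\colon Z_0\rightarrow W_h$ is invertible. Then $\alpha$ becomes a map $X_f\rightarrow Z_0$ in the heart. Because $\Hom$ into a stalk is computed by $\tau^{\geqslant0}$, it suffices to represent it by a roof with a map into $Z_0$ in $\hMork(\mathcal{A})$, namely $(0,0,a)$ with $a\colon X\rightarrow Z$ and $af=0$.

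Under admissible kernels I would replace $X_f$ by an isomorphic object of the form $\coim f\hookrightarrow X$, as in \cite[Prop 5.1]{HKRW}. So I may assume $f$ is a kernel, hence an inflation, and $af=0$. Theorem \ref{KER-COK-PROP}(ii) then gives
\[
\cok(\alpha)=\bigl(\ker[a\;0]\hookrightarrow X\oplus 0\xrightarrow{a}Z\bigr).
\]
Its kernel in the heart is computed through Theorem \ref{KER-COK-PROP}(i), where the relevant pullback is the kernel $\ker a\hookrightarrow X$. The monomorphism hypothesis says that $\ker\alpha=0$, that is, $\operatorname{LH}^{-1}(\cone\alpha)=0$. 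Unwinding the formula in Theorem \ref{KER-COK-PROP}(i), this says that $\ker f\rightarrow X'\rightarrow\ker a$ is a conflation. In other words, $f$ and $k_a$ agree up to a conflation, so the object $X_f$ is isomorphic to $(\ker a)_0=\phi(\ker a)$, which lies in $\mathcal{A}$.

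The main obstacle is this last identification. One has to show that if $X'\rightarrow\ker a$ is a deflation with kernel $\ker f$, then the canonical map $(\ker a)_0\rightarrow X_f$ lies in $\mathcal{S}_{\CC}$. I would verify this by checking the squarefold criterion of Remark \ref{MS-RMK}(iv), which is available because the $\alpha''$ component there is an isomorphism.

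For (ii), given $Z\cong X_f$ with $f\colon X'\rightarrow X$, I would use Theorem \ref{KER-COK-PROP}(iii). It gives $\cok(\phi(f))=(\ker f\hookrightarrow X'\rightarrow X)=X_f$, so there is an exact sequence
\[
\phi(X')\xrightarrow{\phi(f)}\phi(X)\rightarrow Z\rightarrow0 .
\]
The kernel of $\phi(X)\rightarrow Z$ is the image of $\phi(f)$, which is a subobject of $\phi(X)$ and therefore lies in $\mathcal{A}$ by part (i). This yields the desired short exact sequence with its first two terms in $\mathcal{A}$.
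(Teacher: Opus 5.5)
Your part (ii) is fine once (i) is available. The argument for (i), however, breaks down at the final identification $X_f\cong(\ker a)_0=\phi(\ker a)$: this puts $\ker a$ in the wrong degree.

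\begin{itemize}
\item \emph{A counterexample.} Take $f=1_X$, $Z=0$, $a=0$. Then $X_f\cong0$, so $\alpha\colon X_f\rightarrow\phi(0)$ is a monomorphism, yet $\phi(\ker a)=\phi(X)\neq0$.
\item \emph{The ``canonical map'' cannot be in $\mathcal{S}_{\CC}$.} The map $(0,0,k_a)\colon(\ker a)_0\rightarrow X_f$ equals $\pi\,\phi(k_a)$, where $\pi=(0,0,1_X)\colon\phi(X)\rightarrow X_f$ is the cokernel of $\phi(f)$ from Theorem \ref{KER-COK-PROP}(iii). Since $f=k_as$ and $\phi(s)$ is epic ($\phi$ is exact and $s$ is a deflation), $\pi\,\phi(k_a)\,\phi(s)=\pi\,\phi(f)=0$. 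So this map is zero in the heart.
\item \emph{The earlier reduction is also false.} You cannot assume $f$ is a kernel, hence an inflation. \cite[Prop 5.1]{HKRW} only gives a monomorphism $\coim f\hookrightarrow X$; think of an injective map of Banach spaces with non-closed range. If $f$ could always be taken to be an inflation, then $X_f\cong\phi(\cok f)$ and every object of the heart would already lie in $\mathcal{A}$.
\end{itemize}

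The reduction is not needed anyway. For arbitrary $f$ you have $\cok\alpha=(\ker a\hookrightarrow X\xrightarrow{a}Z)$. Your unwinding of $\ker\alpha=0$ is also correct: it says that $s\colon X'\rightarrow\ker a$ with $k_as=f$ is a deflation with kernel $k_f$.

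What that condition actually gives is the following.
\begin{itemize}
\item The morphism $(0,s,1_X)\colon X_f\rightarrow(0\rightarrow\ker a\xrightarrow{k_a}X)$ lies in $\mathcal{S}_{\CC}$. The two conflations demanded by Theorem \ref{NEW-LH}(iii) are $(-k_f,s)$ and the split pair $\bigl(\bigl[\begin{smallmatrix}-k_a\\1\end{smallmatrix}\bigr],[\hspace{1pt}1\;\,k_a\hspace{1pt}]\bigr)$.
\item The two-term target is isomorphic to $\phi(\cok k_a)=\phi(\coim a)$ via $(0,0,\cok k_a)$. That morphism lies in $\mathcal{S}_{\CC}$ precisely because $(k_a,\cok k_a)\in\CC$, by admissibility of kernels.
\end{itemize}
So the subobject is $\phi(\coim a)$, not $\phi(\ker a)$. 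Your first route gives the same answer, since Theorem \ref{KER-COK-PROP}(i) yields $\ker(\cok\alpha)=(0\rightarrow\ker a\xrightarrow{k_a}X)$.

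There is also a shorter version without 3-term complexes. $X_f$ is a quotient of $\phi(X)$ via $\pi$ and embeds into $\phi(Z)$ via $\alpha$. Hence it is the image of $\alpha\pi=\phi(a)$, which is $\cok\phi(k_a)=\phi(\coim a)$ because $\phi$ commutes with kernels and is exact.

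The same observation is the right justification for your claim that every morphism $X_f\rightarrow\phi(Z)$ in the heart has the form $(0,0,a)$ with $af=0$. It follows from $X_f=\cok\phi(f)$ and full faithfulness of $\phi$, not from an appeal to $\tau^{\geqslant0}$.
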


Also the description of the left heart improves.

\begin{thm}\label{THM-HEART-2}\cite[Thm 1.4 together with Prop 4.12]{HKRW} Let $(\mathcal{A},\CC)$ be a deflation-exact category with admissible kernels. Then $\mathcal{LH}(\mathcal{A},\CC)\cong \hMon(\mathcal{A})[\mathcal{S}^{-1}_{\CC}]$. The objects of $\hMon(\mathcal{A})$ are monomorphisms $X_f=(f\colon X'\hookrightarrow X)$ in $\mathcal{A}$, a morphism $\alpha\colon X_f\rightarrow Y_g$ is represented by a commutative square
\begin{equation*}
\begin{tikzcd}
X'\ar[r, hook, "f"]\arrow{d}[swap]{\alpha'} & X \arrow{d}{\alpha}\\[4pt]
Y' \ar[r, hook, swap, "g"] & Y
\end{tikzcd}
\end{equation*}
modulo the equivalence relation $\alpha\sim\beta$ iff $\exists\:\rho\colon X\rightarrow Y'\colon \alpha-\beta=g\rho$. The class $\mathcal{S}_{\CC}$ consists of all morphisms that are pulations and it is a multiplicative system in $\hMon(\mathcal{A})$.
\hfill\diam
\end{thm}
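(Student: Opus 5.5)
The plan is to deduce this from Theorem \ref{NEW-LH}, using the admissibility of kernels to replace every object of $\hMork(\mathcal{A})$ by a monomorphism. Consider $X_f=(\ker f\hookrightarrow X'\xrightarrow{f}X)$. By Remark \ref{RMK-1} the kernel $k_f$ is an inflation with cokernel $c\colon X'\twoheadrightarrow\coim f$. Since $f k_f=0$, $f$ factors uniquely as $f=\bar f c$. By Lemma \ref{LEM-AC}, applied in the degree where $X$ sits, the map $\bar f\colon\coim f\to X$ is a monomorphism. Indeed, $\ker\bar f$ composed with $c$ lands in $\ker f$, and since $c$ is epic this forces $\ker\bar f=0$.

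The morphism $(0,c,1)\colon X_f\to(0\to\coim f\xrightarrow{\bar f}X)$ has as its cone, after removing contractible summands, the sequence $\ker f\to X'\to\coim f$. This is a conflation, so $(0,c,1)\in\mathcal{S}_{\CC}$ by Theorem \ref{RMK-DC}(iii). Hence every object of $\hMork(\mathcal{A})[\mathcal{S}_{\CC}^{-1}]$ is isomorphic to one with $\ker f=0$, and such objects are exactly those of $\hMon(\mathcal{A})$. The Hom-relation in Theorem \ref{NEW-LH}(ii) restricts literally to the one stated here, so $\hMon(\mathcal{A})$ is a full subcategory of $\hMork(\mathcal{A})$. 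The middle component $\alpha'$ between monomorphisms is determined by $\alpha$, and the component $\alpha''=0$ is forced.

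Next I will show that $\hMon(\mathcal{A})\to\hMork(\mathcal{A})[\mathcal{S}_{\CC}^{-1}]$ induces an equivalence $\hMon(\mathcal{A})[(\mathcal{S}_{\CC}\cap\hMon)^{-1}]\simeq\hMork(\mathcal{A})[\mathcal{S}_{\CC}^{-1}]$. The standard criterion for a full subcategory of a category with a calculus of fractions is the following: every object $Y$ admits a morphism $s\colon Y\to Y_0$ in $\mathcal{S}_{\CC}$ with $Y_0$ in the subcategory. The map $(0,c,1)$ above provides exactly this, for left fractions. The restricted class is then automatically a multiplicative system, and this is the step I expect to need care. For (MS2) I would take the Ore square from Theorem \ref{NEW-LH} and postcompose its new corner with its own replacement map $(0,c,1)$. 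For (MS3) I would use the same trick.

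It remains to identify $\mathcal{S}_{\CC}\cap\hMon(\mathcal{A})$ with the pulations. For $\alpha\colon X_f\to Y_g$ between monomorphisms, $\alpha''\colon 0\to0$ is an isomorphism. Remark \ref{MS-RMK}(iv) therefore says that $\alpha\in\mathcal{S}_{\CC}$ if and only if the squarefold $X'\to X\oplus Y'\to Y$ lies in $\CC$, i.e.\ the square is a $\CC$-pulation. By Remark \ref{RMK-1}, $\CC$ consists of all kernel-cokernel pairs, so this is exactly the condition of being a pulation, namely a square that is simultaneously a pullback and a pushout. Combining this with the equivalence $\psi$ of Theorem \ref{NEW-LH} gives $\mathcal{LH}(\mathcal{A},\CC)\cong\hMon(\mathcal{A})[\mathcal{S}^{-1}_{\CC}]$.
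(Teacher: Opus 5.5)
Your route is not the paper's. The paper gives no proof of its own and cites the functor-category argument of \cite{HKRW}; Remark~\ref{REM}(iv) only indicates a derived-category alternative. You instead deduce the statement from Theorem~\ref{NEW-LH} by passing to the full subcategory of objects with $\ker f=0$. Much of this works:
\begin{itemize}
\item $(0,c,1)\colon X_f\to X_{\bar f}$ lies in $\mathcal{S}_{\CC}$.
\item $\hMon(\mathcal{A})\subseteq\hMork(\mathcal{A})$ is full, with the restricted homotopy relation.
\item Full faithfulness on left fractions follows by postcomposing the apex of a roof with its replacement.
\item Remark~\ref{MS-RMK}(iv), together with the fact that $\CC$ consists of all kernel-cokernel pairs, identifies $\mathcal{S}_{\CC}\cap\hMon(\mathcal{A})$ with the pulations.
\end{itemize}

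The gap is the step you flagged. A one-sided replacement $Y\to Y_0$ in $\mathcal{S}_{\CC}$ does \emph{not} automatically make the restricted class a two-sided multiplicative system. For instance, in the poset $w<x<t$, $w<y<t$ with all morphisms inverted, deleting $w$ leaves the cospan $x\to t\leftarrow y$ without a completion. Postcomposing with $(0,c,1)$ only handles the halves of (MS2) and (MS3) in which the newly built object is a \emph{target}, i.e.\ parts \textcircled{2} and \textcircled{3} of the proof of Theorem~\ref{NEW-LH}. In part \textcircled{1} the new object $S_u$ is the common \emph{source} of $\sigma\colon S_u\to X_f$ and $\beta\colon S_u\to Y_g$. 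In part \textcircled{4} the new morphism $\sigma\colon Z_h\to X_f$ also starts at the new object. Postcomposing with a replacement gives no maps \emph{out of} the replacement.

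You need an extra argument here, and there are two options:
\begin{itemize}
\item Note that these two constructions already stay in $\hMon(\mathcal{A})$ when the inputs do. In \textcircled{1} one has $\ker u\cong\ker f\oplus\ker g=0$. In \textcircled{4}, $h$ is a pullback of the monomorphism $f$.
\item Show that $(0,c,1)$ is a reflection. For $g$ monic, any $(\gamma'',\gamma',\gamma)\colon X_f\to Y_g$ satisfies $g\gamma'k_f=\gamma fk_f=0$, so $\gamma'k_f=0$ and $\gamma'$ factors through $c=\cok k_f$. The factorization is unique up to homotopy, since the homotopy relation only involves the last component, which $(0,c,1)$ does not change. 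Together with saturation of $\mathcal{S}_{\CC}$, this yields the missing Ore condition and the missing direction of cancellation.
\end{itemize}

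Separately, your argument that $\bar f\colon\coim f\to X$ is monic does not work as written. Lemma~\ref{LEM-AC} does not contain this statement. Epicness of $c$ cannot suffice either: in $\HCOM$ the coimage map is an epic cokernel, yet by Example~\ref{EX-1} the induced map $\coim f\to F$ need not be injective. What you need is that $c$ is a deflation. Pull back $k=\ker\bar f\colon K\to\coim f$ along $c$ via \hypref{R2} to get a deflation $p\colon P\twoheadrightarrow K$ and a map $q\colon P\to X'$ with $cq=kp$. Then $fq=\bar fkp=0$, so $q=k_fm$ for some $m$. Hence $kp=ck_fm=0$, and $k=0$ because $p$ is epic.

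With these two repairs your argument is complete. Unlike the cited proof, it needs only the size assumption of Remark~\ref{SIZE} rather than essential smallness of $\mathcal{A}$.
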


\begin{rmk}\label{REM} We add the following comments.\vspace{3pt}

\begin{myitemize}

\item[(i)] Under the assumptions of Theorem \ref{THM-HEART-2} the category $\hMon(\mathcal{A})$ is already preabelian by \cite[Prop 4, Lem 11]{Wegner17} and \cite[Prop 4.10]{HKRW}.

\vspace{2pt}

\item[(ii)] The proof of \cite[Prop 4]{Wegner17} gives explicit formulae for kernels and cokernels. In the case of kernels the formula of Theorem \ref{KER-COK-PROP}(i) specializes to the one given in \cite[Prop 4]{Wegner17}. In the case of cokernels (and $\ran =\coim$, see \cite[Lem 11]{Wegner17}) this is not true as in general the map $[\hspace{1pt}\alpha\;\,g\hspace{1pt}]$ in Theorem \ref{KER-COK-PROP}(ii) need not be monic merely because $f$ and $g$ are so.

\vspace{2pt}

\item[(iii)] The assumptions on $(\mathcal{A},\CC)$ imply that all pulations are $\CC$-pulations in the sense mentioned in Remark \ref{MS-RMK}(iv).

\vspace{2pt}

\item[(iv)] While \cite{HKRW} use the functor category $\operatorname{mod}(\mathcal{A})$ to prove Theorem \ref{THM-HEART-2} and assume that $\mathcal{A}$ is essentially small in doing that, it is possible to give a proof that only uses the (bounded) derived category by adapting \cite[Cor 1.2.21]{S}. Thus, also Theorem \ref{THM-HEART-2} may be used under our general assumptions on size as outlined in Remark \ref{SIZE}.

\end{myitemize}
\end{rmk}

Dualization of the above gives analogous results for strongly inflation-exact categories with (admissible) cokernels. We will not write out everything explicitly, but mention the following, in particular to fix our notation for the next sections. Notice that, despite dualization, we stick to \emph{co}chain complexes.

\begin{rmk}\label{INF-RMK} Let $(\mathcal{A},\CC)$ be a strongly inflation-exact category with cokernels. Then

\begin{myitemize}

\item[(i)] \begin{tikzcd}[column sep=17pt,
row sep=1pt,
column 1/.style={
    nodes={
      text width=4cm,
      align=right
    }
  }]
\mathmakebox[1.3cm][r]{\tau_R^{\leqslant n}C^{\bullet}}
  &[-20pt]=
  &[-20pt] \cdots\arrow[r]
  & C^{\hspace{0.5pt}n} \arrow[r]
  & C^{\hspace{0.5pt}n+1} \arrow[r, "p^{n+1}"]
  & \cok d^{\hspace{1pt}n} \arrow[r]
  & 0 \arrow[r]
  & \cdots,
\end{tikzcd}

\vspace{-3pt}

\item[\phantom{(i)}] \begin{tikzcd}[column sep=17pt,
row sep=1pt,
column 1/.style={
    nodes={
      text width=4cm,
      align=right
    }
  }]
\mathmakebox[1.3cm][r]{\tau_R^{\geqslant n+1}C^{\bullet}}
  &[-20pt]=
  &[-20pt] \cdots \arrow[r]
  & 0 \arrow[r]
  & \cok d^{\hspace{1pt}n} \arrow[r, "i^{n+1}"]
  & C^{\hspace{0.5pt}n+2} \arrow[r]
  & C^{\hspace{0.5pt}n+3} \arrow[r]
  & \cdots,
\end{tikzcd}

\item[(ii)] $\Der^{\leqslant0}_R(\mathcal{A},\CC)=\bigl\{C^{\bullet}\in\Der(\mathcal{A},\CC)\:|\:\forall\:n\geqslant 1\colon\cok d^{\hspace{1pt}n-1}\stackrel{i^{n}}{\longrightarrow}C^{n+1}\stackrel{p^{n+1}}{\longrightarrow}\cok d^{\hspace{1pt}n}\in\CC\bigr\}$

\vspace{-1pt}

\item[\phantom{(ii)}] $\Der^{\geqslant0}_R(\mathcal{A},\CC)=\bigl\{C^{\bullet}\in\Der(\mathcal{A},\CC)\:|\:\forall\:n\leqslant-1\colon\cok d^{\hspace{1pt}n-1}\stackrel{i^{n}}{\longrightarrow}C^{n+1}\stackrel{p^{n+1}}{\longrightarrow}\cok d^{\hspace{1pt}n}\in\CC\bigr\}$

\vspace{3pt}

\item[(iii)] $\operatorname{RH}^n(C^{\bullet})=\cdots\rightarrow0\rightarrow\cok d^{\hspace{1pt}n-1}\xrightarrow{\pm i^{\hspace{0.5pt}n}}C^{n+1}\xrightarrow{\pm p^{n+1}}\cok d^{\hspace{1pt}n}\rightarrow0\rightarrow\cdots$ with $\cok d^{\hspace{1pt}n-1}$ in degree 0,

\vspace{3pt}

\item[(iv)] $\mathcal{RH}(\mathcal{A},\CC)\cong\hMorc(\mathcal{A})[\mathcal{S}_{\CC}^{-1}]$.\end{myitemize}

\vspace{6pt}

\noindent{}If $(\mathcal{A},\CC)$ has admissible cokernels, then

\begin{myitemize}

\vspace{4pt}

\item[(v)] $\operatorname{RH}^n(C^{\bullet})=\cdots\rightarrow0\rightarrow\cok d^{\hspace{1pt}n-1} \rightarrowtriangle\im d^{\hspace{1pt}n}\rightarrow0\rightarrow\cdots$ with $\cok d^{\hspace{1pt}n-1}$ in degree $0$,

\vspace{4pt}

\item[(vi)] $\mathcal{RH}(\mathcal{A},\CC)\cong\hEpi(\mathcal{A})[\mathcal{S}_{\CC}^{-1}]$.\hfill\diam

\end{myitemize}
\end{rmk}

\section{Classic Examples and Formal Quotients}\label{SEC:CLEX}

In the case of a quasiabelian category the content of Theorem \ref{THM-HEART-2} is due to Schneiders \cite{S} and has been applied to the categories of Banach spaces, Fr\'echet spaces, locally convex (Hausdorff) spaces and bornological vector spaces since the early 2000s \cite{Prosmans, ProsmansSchneiders}. The objects of the left heart can here be interpreted as `formal quotients', an idea promoted already much earlier by Waelbroeck \cite{W, WF, Wbor}, see also the book \cite{WBook}, heuristically addressing the issue that, e.g., in Banach spaces, a vanishing cokernel does not yield surjectivity. Indeed, given a map $f\colon E\rightarrow F$ between Banach spaces its kernel and cokernel in the left heart are given by
\begin{equation*}
\begin{tikzcd}[column sep =20pt, row sep =20pt, ampersand replacement=\&]
\ker\phi(f)\ar[d]\&0\ar[d]\ar[r,hook,]\&f^{-1}(0)\ar[d,"\operatorname{inc}"]\\
\phi(E)\ar[d,swap,"\phi(f)"]\&0\ar[d]\ar[r,hook]\&E\ar[d,"f"]\\
\phi(F)\ar[d]\&0\ar[d]\ar[r,hook]\&F\ar[d,"1"]\\
\cok\phi(f)\&f(E)\ar[r,hook, swap,"\operatorname{inc}"]\&F
\end{tikzcd}
\end{equation*}
where $f(E)$ is endowed with the topology of $E/f^{-1}(0)$ and the inclusion $f(E)\hookrightarrow F$ represents the aforementioned `formal quotient'. Also for other quasiabelian categories the left heart has recently been used and described even more explicitly \cite{Lupini,Braunling}. The generalization in Theorem \ref{THM-HEART-2} to strongly-deflation exact categories with admissible kernels allowed for the treatment of, e.g., the non-quasiabelian category of LB-spaces when endowed with its maximal deflation-exact structure. There we get exactly the same picture as above only that $f^{-1}(0)$ in general carries a topology that is finer than the subspace topology.

\smallskip

While examples for the dual situation have been mentioned, e.g.\ \cite[Ex 9.8\hspace{1pt}--\hspace{1pt}9.10]{HKRW} and \cite[Section 3]{LW}, the latter has so far not been discussed in much detail. In particular, an interpretation of the epimorphism category seems unknown. We will fill this gap in the next chapter focusing mostly on the prototype example of the category $\COM$ of complete locally convex spaces.

\smallskip

In Sections \ref{SEC:COM}\,--\,\ref{SEC:DN} we will indeed discuss several new examples arising as subcategories of the category $\LCS$ of all locally convex spaces and to do so we will frequently use results established in \cite{Prosmans, HSW, KW, SiegThesis}. Recall in particular that in a preabelian category we call a morphism $f\colon E\rightarrow F$ \emph{strict}, if its \emph{parallel} $\bar{f}\colon\coim(f)\rightarrow\im(f)$ is an isomorphism, and that $f$ is a kernel [cokernel] iff it is a strict monomorphism [epimorphism]. Recall moreover that on the quasiabelian categories $\HDLCS\subseteq\LCS$ one naturally considers the maximal exact structures as in Theorem \ref{THMDEFMAX} which consist of the \emph{topologically exact sequences}, i.e., sequences
$$
0\longrightarrow E\xrightarrow{\hspace{2pt}f\hspace{2pt}} F\xrightarrow{\hspace{2pt}g\hspace{2pt}} G\longrightarrow 0
$$
in $\LCS$ or $\HDLCS$ such that the sequence is short exact when considered as a sequence of vector spaces and additionally $f$ and $g$ are open onto their ranges, see e.g.\ \cite[Dfn on p.~309]{MV}. We write $\EE_{\operatorname{top},\HDLCS}\subseteq\EE_{\operatorname{top},\LCS}$ for these two structures.







\section{Complete LcHs}\label{SEC:COM}

We denote by $\COM$ the full subcategory of $\HDLCS$ consisting of the  complete lcHs, see \cite[Thm 3.3]{HSW} and \cite[Section 4.1]{Prosmans}. The latter is well known to be preabelian: kernels compute as in $\HDLCS$ and the cokernel of $f\colon X\rightarrow Y$ is the composition $Y\stackrel{\hspace{-3pt}\scriptscriptstyle q}{\rightarrow}Y/f(X)\stackrel{\hspace{-3pt}\scriptscriptstyle i}{\rightarrow}(Y/\overline{f(X)})^{\raisebox{2pt}{$\scriptstyle\wedge$}}$ of quotient  map and embedding into the completion. We thus may apply Theorem \ref{THMDEFMAX} and denote by $\II_{\text{max},\,\COM}$ and $\DD_{\text{max},\,\COM}$ its maximal inflation- and deflation-exact structures, respectively. 

\begin{thm}\cite[Ex 9.8]{HKRW} The category $(\COM,\II_{\operatorname{max},\,\COM})$ is strongly inflation-exact and has admissible cokernels, the inflation exact structure consists of all kernel-cokernel pairs in $\COM$. The embedding $(\COM,\II_{\operatorname{max},\,\COM})\rightarrow \mathcal{RH}(\COM,\II_{\text{max},\,\COM})$ lifts to an equivalence of bounded derived categories.\hfill\diam{}
\end{thm}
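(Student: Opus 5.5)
The plan is to reduce everything to the dual of Remark \ref{RMK-1} together with the dual of Theorem \ref{THM-LH-PHI}. This means establishing one concrete fact about $\COM$: every cokernel in $\COM$ is a semistable cokernel, in the sense that its pullback along any morphism exists and is again a cokernel. Two things are already available. First, $\COM$ is preabelian, since kernels are computed in $\HDLCS$ and cokernels are given by the completed Hausdorff quotient described above. Second, $\COM$ is karoubian. Hence Theorem \ref{THMDEFMAX}(iii) applies, and $\II_{\operatorname{max},\,\COM}$ is strongly inflation-exact. If every kernel-cokernel pair $(f,g)$ lies in $\II_{\operatorname{max},\,\COM}$ and every cokernel is a deflation, then the structure has admissible cokernels and coincides with $\CC_{\operatorname{all},\,\COM}$.

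So the key steps, in order, are these.

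\textbf{Step 1.} Identify kernels in $\COM$. These are the topological embeddings onto closed subspaces, since a closed subspace of a complete space is complete.

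\textbf{Step 2.} Show that every kernel $f\colon X\rightarrow Y$ is a semistable kernel. Given $t\colon X\rightarrow T$, the pushout in $\COM$ is the completion of the $\HDLCS$-pushout $(T\oplus Y)/\overline{\{(t(x),-f(x))\}}$. The main point is to check that the induced map $q_T\colon T\rightarrow Q$ is again a topological embedding with closed range. The classical argument for $\LCS$ runs as follows: pushouts of topological embeddings in $\LCS$ are topological embeddings, via extension of continuous seminorms from $X$ to $Y$ (Hahn--Banach for seminorms). After that, two further facts are needed. One is that the antidiagonal $\{(t(x),-f(x))\}$ is already closed, because $f$ is an embedding and $X$ is complete. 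The other is that completing preserves being an embedding. Finally, the image of the complete space $T$ in $Q$ is complete, hence closed. This gives the semistability, so by Theorem \ref{THMDEFMAX}(iii) all kernel-cokernel pairs belong to $\II_{\operatorname{max},\,\COM}$.

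\textbf{Step 3.} Show that every cokernel is a deflation. Here I would take the cokernel $g=\cok h$ and put $f:=\ker g$. Then $f$ is a kernel, hence an inflation by Step 2. It remains to see that $g=\cok f$, so that $(f,g)\in\CC_{\operatorname{all},\,\COM}=\II_{\operatorname{max},\,\COM}$. This holds because in any preabelian category a cokernel is the cokernel of its own kernel.

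By the dual of Remark \ref{RMK-1}, the pair $(\COM,\II_{\operatorname{max},\,\COM})$ is then strongly inflation-exact with admissible cokernels. The derived equivalence follows from the dual of Theorem \ref{THM-LH-PHI}(ii) for $*=\bb$. Size issues are handled as in Remark \ref{SIZE}, by restricting to a universe. I expect the completion to be harmless for this purpose, because completion is independent of the universe restriction.

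The main obstacle is Step 2, namely the stability of closed embeddings under pushout in $\COM$. This is exactly where completeness matters, and it is also exactly where its dual fails: pullbacks of cokernels are not always cokernels, which is Prosmans' non-quasiabelianity. I would first try the seminorm-extension argument directly on the quotient $(T\oplus Y)/N$. Here one shows that for each continuous seminorm $p$ on $T$ there is a continuous seminorm $\tilde p$ on $(T\oplus Y)/N$ with $\tilde p(q_T(\tau))=p(\tau)$. One would extend $p\circ t$ from $X$ to a seminorm $r$ on $Y$, which is possible since $f$ is an embedding, and set $\tilde p(\tau,y):=\inf_x\bigl(p(\tau+t(x))+r(y-f(x))\bigr)$. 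Completion preserves such seminorm estimates, so the embedding survives into the completion.
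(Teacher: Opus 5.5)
Your proposal is correct and follows the route behind this result, which the paper does not prove itself but quotes from \cite[Ex 9.8]{HKRW}: once every kernel in $\COM$ is known to be semistable, $\II_{\operatorname{max},\,\COM}$ is the class of all kernel-cokernel pairs, every cokernel (being the cokernel of its kernel) is a deflation, and the duals of \cite[Rmk 4.9, Prop 4.12]{HKRW} and of Theorem \ref{THM-LH-PHI}(ii) finish the argument. Your direct proof of the semistability (closed antidiagonal, seminorm extension, completeness and hence closedness of the image of $T$ in the completed pushout) correctly supplies the one analytic input; the only point left implicit is in Step 1, where to see that a closed embedding $f\colon X\rightarrow Y$ is a kernel in $\COM$ you should use the map $Y\rightarrow\widehat{Y/f(X)}$, since $Y/f(X)$ itself need not be complete.
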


\begin{rmk}\label{COMRMK} The right heart can be described as
$$
\mathcal{RH}(\COM,\II_{\text{max},\,\COM})\cong\hEpi(\COM)[\mathcal{S}_{\II_{\text{max},\,\COM}}^{-1}]
$$
and the embedding $\phi$ of $\COM$ into the right heart sends an object $E\in\COM$ onto the epimorphism $E\epi0$. Using Remark \ref{INF-RMK} and proceeding analogously to the proof of Theorem \ref{KER-COK-PROP} we find that given an $f\colon E\rightarrow F$ in $\COM$, its kernel and cokernel in the right heart look as follows
\begin{equation}\label{DIAG-NEW}
\begin{tikzcd}[column sep =20pt, row sep =20pt, ampersand replacement=\&]
\ker\phi(f)\ar[d]\&E\ar[d,swap,"1"]\ar[r,-open triangle 60,"f|^{\overline{f(E)}}"]\&\overline{f(E)}\ar[d,"0"]\\
\phi(E)\ar[d,swap,"\phi(f)"]\&E\ar[d,swap,"f"]\ar[r,-open triangle 60]\&0\ar[d,"0"]\\
\phi(F)\ar[d]\&F\ar[d,swap,"iq"]\ar[r,-open triangle 60]\&0\ar[d,"0"]\\
\cok\phi(f)\&(F/\overline{f(E)})^{\raisebox{2pt}{$\scriptstyle\wedge$}}\ar[r,-open triangle 60]\&0.
\end{tikzcd}
\end{equation}
The map $f$ is an isomorphism in $\COM$ iff it is an isomorphism in the right heart iff the kernel and cokernel of $f$ in the heart are zero: Indeed, the kernel being zero encodes that $f$ has a closed range and is a topological embedding. If this holds, the cokernel being zero characterizes surjectivity.

\smallskip

Recall that the left heart for lcs, lcHs, Banach spaces, Fr\'echet spaces, LB-spaces, etc.\ adds formal quotients $f(E)\hookrightarrow F$ that serve as the `correct cokernels', whereas the kernels remain unchanged. Now, the cokernels remain unchanged (with the disadvantage that they kill not just $f(E)$ but its closure) and we add what one might call \emph{formal co-quotients} $E\epi\overline{f(E)}$, as can be seen in the top row of \eqref{DIAG-NEW}. These serve as `new kernels' and allow for the recovery of the information of what has been killed besides $f(E)$ in $(F/\overline{f(E)})^{\wedge}$. From a different perspective, if $f$ is the continuous inclusion of a linear subspace and we drop it from notation, then $E\epi\overline{E}$ (with closure taken in $F$) is of course injective and thus may be regarded as a formal quotient in the classic sense again. Notice however that there will be objects in the right heart that are not monomorphisms.
\end{rmk}

We mention that the above holds verbatim for the categories $\mathsf{PLS}$, $\mathsf{PLS}_{\mathsf{w}}$ and $\mathsf{PLN}$ as was noted in \cite[Cor 3.6]{LW} but that there is a misprint in \cite[Rmk 3.7]{LW}, where in the 2nd line the formula for the right heart must be $(\operatorname{hEpi}\mathcal{A})[\{\operatorname{pulations}\}^{-1}]$.

\begin{prop} The category $(\COM,\II_{\operatorname{max},\,\COM})$  has enough injective objects.
\end{prop}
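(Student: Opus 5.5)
We need to show: every $E\in\COM$ admits an inflation (for $\II_{\max,\COM}$) $E\rightarrowtail I$ with $I$ injective with respect to $\II_{\max,\COM}$, i.e.\ $\Hom(\,\cdot\,,I)$ sends inflations to surjections. The plan is to use the classical injective objects of $\LCS$/$\HDLCS$: products of copies of $\ell^\infty(S)$ (or, for the Hahn--Banach flavour, products of Banach spaces $\ell^\infty(S)$), which are complete. By \cite[Prop 2.1.12(iii)]{Prosmans} these are injective in $\HDLCS$ with respect to topologically exact sequences, i.e.\ continuous linear maps defined on a topological subspace extend (Hahn--Banach for $\ell^\infty$-valued maps, using that $\ell^\infty(S)$ is 1-injective among Banach spaces and continuity is controlled by a single seminorm).

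Step 1: identify inflations. By Theorem \ref{THMDEFMAX}(iii) and the admissible-cokernels statement, inflations of $\II_{\max,\COM}$ are semistable kernels in $\COM$. Kernels in $\COM$ compute as in $\HDLCS$, so every inflation $f\colon E\rightarrow F$ is a topological embedding with closed range. Hence any inflation is in particular an inflation of $\EE_{\operatorname{top},\HDLCS}$ (a topologically exact sequence $E\to F\to F/f(E)$ exists in $\HDLCS$, the quotient being Hausdorff since $f(E)$ is closed).

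Step 2: injectivity. Let $I=\prod_{j\in J}\ell^\infty(S_j)$, which is complete. Given an inflation $f\colon E\rightarrowtail F$ in $\COM$ and $u\colon E\to I$, by Step 1 $f$ is an embedding of a subspace, so the $\HDLCS$-injectivity of $I$ yields $v\colon F\to I$ with $vf=u$. Thus $I$ is injective in $(\COM,\II_{\max,\COM})$; this requires only that injectivity be tested on fewer (not more) monomorphisms, which Step 1 guarantees.

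Step 3: enough injectives. For $E\in\COM$ choose a fundamental system $(p_j)_{j\in J}$ of continuous seminorms and let $S_j$ be the unit ball of $(E,p_j)'$; the map $\iota\colon E\to\prod_j\ell^\infty(S_j)$, $x\mapsto (\langle \varphi,x\rangle)_{\varphi\in S_j}$, is a topological embedding by Hahn--Banach. Since $E$ is complete, $\iota(E)$ is complete hence closed, so $\iota$ is a kernel in $\COM$ (namely of its cokernel $I\to I/\iota(E)$, which is already complete? not needed: $\iota$ is the kernel of $I\to (I/\iota(E))^{\wedge}$ because $\iota(E)$ is closed). The main obstacle is showing $\iota$ is an inflation, i.e.\ a semistable kernel. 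I would avoid checking pushouts directly by noting that the statement of \cite[Ex 9.8]{HKRW} recalled above says $\II_{\max,\COM}$ consists of all kernel-cokernel pairs; so it suffices that $(\iota,\cok\iota)$ is a kernel-cokernel pair, which holds since $\iota=\ker(\cok\iota)$ for a closed embedded subspace. If one prefers a direct argument, the pushout of $\iota$ along $t\colon E\to T$ is the completion of $(T\oplus I)/\{(t x,-\iota x)\}^{-}$ and the induced map $T\to Q$ is an embedding with closed range by an extension-of-seminorms argument; this is where I expect the real work to sit.
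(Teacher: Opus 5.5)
Your proposal is correct and follows essentially the same route as the paper: embed $E$ into the $\LCS$-injective product $\prod\ell^{\infty}(U^{\circ})$, use completeness of $E$ to get a closed range and hence a kernel, which is an inflation because $\II_{\operatorname{max},\,\COM}$ consists of all kernel-cokernel pairs by \cite[Ex 9.8]{HKRW}. Injectivity of $I$ then follows because $\COM$-inflations are topological embeddings; the paper cites \cite[Prop 4.1.10(i)]{Prosmans} for this, whereas you argue via kernels in $\COM$ computing as in $\HDLCS$, and the hedged direct pushout argument at the end is unnecessary.
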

\begin{proof} For every lcHs $E$ there is an embedding into an $\LCS$-injective
$$
i\colon E\rightarrow I\,:=\hspace{-8pt}\Bigprod{U\in\mathcal{U}_0(E)}{}\hspace{-7pt}\ell^{\infty}(U^{\circ}),
$$
see \cite[Thm 2.2.1 + text before it]{JochensBuch} or \cite[Prop 2.1.12 + Cor 2.1.9]{Prosmans}. The product is complete and if $E$ is complete then the range of $i$ is automatically closed, which means that $i$ is a kernel. Consequently, $i$ is an $\II_{\operatorname{max},\,\COM}$-inflation. Finally, $I$ is $(\COM,\II_{\operatorname{max},\,\COM})$-injective: Given $f\colon A\rightarrow I$ and an $\II_{\operatorname{max},\,\COM}$-inflation $g\colon A\rightarrowtail B$, then $g$ is also an $\LCS$-inflation by \cite[Prop 4.1.10(i)]{Prosmans} and thus $f$ lifts along $g$ in $\LCS$ and consequently in $\COM$.
\end{proof}

Let us now turn to the deflation exact structure.

\begin{thm}\label{COM-THM} The category $(\COM,\DD_{\operatorname{max},\,\COM})$ is strongly deflation-exact and a fully exact subcategory of the quasiabelian category $\HDLCS$. The embedding $(\COM,\DD_{\operatorname{max},\,\COM})\rightarrow \mathcal{LH}(\COM,\DD_{\operatorname{max},\,\COM})$ lifts to an equivalence of bounded derived categories and the left heart is given by $\hMork(\COM)[\mathcal{S}_{\DD_{\operatorname{max},\,\COM}}^{-1}]$ as described in Section \ref{SEC-LRH}.
\end{thm}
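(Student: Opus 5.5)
Since $\COM$ is preabelian, hence karoubian, Theorem \ref{THMDEFMAX}(ii) already tells us that $(\COM,\DD_{\operatorname{max},\,\COM})$ is strongly deflation-exact. Every morphism in $\COM$ has a kernel, computed as in $\HDLCS$ (a closed subspace of a complete space is complete). The whole statement then reduces to identifying the conflations: we want to show that $\DD_{\operatorname{max},\,\COM}$ consists precisely of the sequences in $\COM$ that are topologically exact in $\HDLCS$. Once this is done, the remaining claims follow from earlier results. The lift to an equivalence $\Der^{\bb}(\COM,\DD_{\operatorname{max},\,\COM})\cong\Der^{\bb}(\mathcal{LH})$ is Theorem \ref{THM-LH-PHI}(ii). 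The description of the left heart as $\hMork(\COM)[\mathcal{S}^{-1}]$ is Theorem \ref{NEW-LH}, which requires no admissibility of kernels. Local smallness of $\Der^{\bb}$ is handled via Remark \ref{SIZE}.

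\textbf{Step 1: topologically exact sequences with complete ends are conflations.} Let $0\to E\to F\to G\to 0$ be topologically exact with $E,F,G$ complete. It is a kernel-cokernel pair in $\COM$. For the cokernel: $F/f(E)\cong G$ is already complete and Hausdorff, so the $\COM$-cokernel formula $iq$ reduces to $q$. It remains to check that $g$ is a semistable cokernel in $\COM$. Pullbacks in $\COM$ are computed as in $\HDLCS$, because the pullback of $g$ along $t\colon T\to G$ is a closed subspace of $F\times T$ and hence complete. In the quasiabelian category $\HDLCS$ the pulled-back map $F\times_G T\to T$ is a topological surjection, and its kernel is isomorphic to $E$. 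So it is a topologically exact sequence of complete spaces, and therefore a $\COM$-cokernel by the observation above.

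\textbf{Step 2: conflations are topologically exact.} Let $(f,g)\in\DD_{\operatorname{max},\,\COM}$. Then $f=\ker g$ is a closed embedding, and $g=\cok f$ means that $G\cong(F/f(E))^{\wedge}$, where $f(E)$ is closed. The key point is that $g$ is surjective. Pull $g$ back along a map $t\colon\KK\to G$ hitting an arbitrary point $y\in G$. The pullback $P=\{(x,\lambda):g(x)=\lambda y\}$ has a projection $P\to\KK$ which by semistability must again be a cokernel in $\COM$. A cokernel onto the finite-dimensional space $\KK$ has dense range, and dense range in $\KK$ means full range. Hence $y\in g(F)$, so $g$ is surjective. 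Then $F/f(E)\to G$ is a continuous bijection from a space into its completion, so the completion adds no points and $g$ is open onto $G$. This gives topological exactness.

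Together, Steps 1 and 2 show that the inclusion $(\COM,\DD_{\operatorname{max},\,\COM})\to(\HDLCS,\EE_{\operatorname{top},\HDLCS})$ preserves and reflects conflations. Hence $\COM$ is a fully exact subcategory, although it is not extension-closed. Being exact, the conflation structure also satisfies \textbf{L0}--\textbf{L3}. \textbf{L2} holds because the pushout of a closed embedding of complete spaces along a map into a complete space is $(T\oplus F)/E$, which is complete as a quotient of complete spaces by a closed subspace fitting into the exact sequence $0\to T\to(T\oplus F)/E\to G\to 0$. Completeness is a three-space property only for certain classes, so this step needs care. If \textbf{L2} fails in general, I would content myself with the deflation-exact statement.

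I expect the main obstacle to be Step 2, specifically the surjectivity of semistable cokernels: one must be sure that pullbacks in $\COM$ agree with those in $\HDLCS$ and that the completion-based cokernel behaves as described. The other delicate point is the claim of two-sidedness, which depends on completeness of pushouts.
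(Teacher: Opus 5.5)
Steps~1 and~2 correctly establish $\DD_{\operatorname{max},\,\COM}=\EE_{\operatorname{top},\,\HDLCS}\cap\COM$, and they are more self-contained than the paper. Step~1 checks semistability of $g$ directly via $\HDLCS$-pullbacks, where the paper deduces $\EE_{\operatorname{top},\COM}\subseteq\DD_{\operatorname{max},\,\COM}$ from maximality. The pullback along $\lambda\mapsto\lambda y$ in Step~2 replaces the paper's citation of Sieg's thesis. Strong deflation-exactness, the derived equivalence and the description of the heart are therefore fine.

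The gap is the claim that $\COM$ is a \emph{fully exact} subcategory of $\HDLCS$, i.e.\ that this conflation structure is two-sided exact and induced from $\HDLCS$. Knowing that the inclusion preserves and reflects conflations does not give \textbf{L0}--\textbf{L3}, so your ``Being exact, \dots'' is circular. Your \textbf{L2} argument is also left unfinished.

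The missing fact is exactly the one you deny: $\COM$ \emph{is} extension-closed in $(\HDLCS,\EE_{\operatorname{top},\,\HDLCS})$, because completeness is a three-space property of Hausdorff topological vector spaces. If $T\subseteq S$ is closed and $T$, $S/T$ are complete, then $S$ is complete: given a Cauchy filter $\Phi$ with $q(\Phi)\to0$, the sets $(M+U)\cap T$ form a Cauchy filter base on $T$, and $\Phi$ converges to its limit. The paper takes this from the table in Dierolf--Sieg.

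Your first justification, ``quotient of complete spaces by a closed subspace'', proves nothing, since such quotients can be incomplete (see the proof of Proposition~\ref{HYPOPROP}(vi)). What works is the three-space property applied to $0\to T\to(F\oplus T)/\ran[f\;-t]^{\T}\to G\to0$. With it, the $\HDLCS$-pushout is complete and hence is the $\COM$-pushout, and the resulting sequence is a conflation by your Step~1. So \textbf{L2} holds, and \textbf{R0}--\textbf{R2} plus \textbf{L2} give exactness by Keller's observation recalled after Definition~\ref{DEFLEXCAT}.

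Equivalently, as in the paper: extension-closedness makes $\EE_{\operatorname{top},\,\HDLCS}\cap\COM$ an exact structure, and your Steps~1--2 identify it with $\DD_{\operatorname{max},\,\COM}$. Note also that in the usual terminology a fully exact subcategory \emph{means} an extension-closed full subcategory with the induced structure. Your sentence ``fully exact, although it is not extension-closed'' is therefore self-contradictory.
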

\begin{proof} Theorem \ref{THMDEFMAX} implies that  $\DD_{\operatorname{max},\,\COM}$ is strongly deflation exact. Using \cite[table on p.\ 2114]{DS12} we get that $\COM\subseteq\HDLCS$ is extension closed and thus the class $\EE_{\operatorname{top},\COM}=\EE_{\operatorname{top},\HDLCS}\cap\COM$ of the topological short exact sequences with all three terms in $\COM$ is an exact structure and
$$
\EE_{\operatorname{top},\COM}\subseteq\DD_{\operatorname{max},\,\COM}.
$$
Given $E\stackrel{\scriptscriptstyle f}{\rightarrow}F\stackrel{\scriptscriptstyle g}{\rightarrow}G$ in $\DD_{\operatorname{max},\,\COM}$, we have by definition that $g$ is a semistable cokernel. The cokernel property in $\COM$ yields openness onto the range. Using \cite[Prop 2.2.3]{SiegThesis} we get that $g$ is surjective. The map $f=\ker g$ is then up to isomorphism the inclusion  $(g^{-1}(0),\tau_F)\rightarrow F$. Hence $(f,g)\in\EE_{\operatorname{top},\COM}$. The remaining statements (and more!) follow from Theorems \ref{THM-LH-PHI}, \ref{NEW-LH} and \ref{KER-COK-PROP}.
\end{proof}

Using the deflation-exact structure appears to be much more natural, since $\DD_{\operatorname{max},\,\COM}$ is even two-sided exact and yields an exact embedding into $\HDLCS$. Also, the embedding into its left heart preserves kernels while cokernels compute as in Theorem \ref{KER-COK-PROP}. Given two complete spaces $E$, $F$ such that $E\subseteq F$ holds  as linear spaces and the inclusion map is continuous, their quotient in the left heart will be $0\rightarrow E\hookrightarrow F$ which is similar to the classical `formal quotient'. Compared to using the inflation exact structure, one has to however work with 3-term complexes in general. Moreover it is, to the authors' best knowledge, unknown if the unrestricted category $(\COM,\DD_{\operatorname{max},\,\COM})$ has a locally small bounded derived category.





\section{Hypo-Complete LcHs}\label{SEC:HCOM}

In this section we consider the three full subcategories of $\HDLCS$ formed by the quasi-complete, the sequentially complete and the locally (or Mackey) complete lcHs, respectively. All three have important applications in different areas of mathematics: quasi-complete spaces play an important role in Lie theory \cite{Ga_cqc}; sequentially complete spaces are more intuitive than complete spaces and often sufficient in applications, e.g., in ergodic theory, functional calculus or evolution equations \cite{ABR, Stoian, AK02, KMS21, XL}; locally complete spaces occur, e.g., in convenient analysis, vector-valued holomorphy and again Lie theory \cite{KM97, Teichmann, KK21, GlNe}.

\begin{dfn}\cite[Chapters 3.2 and 10.2]{Jarchow} A lcHs $X$ is called\vspace{2pt}
\begin{myitemize}

\item[(i)] \emph{quasi-complete} if every closed and bounded set in $X$ is complete,\vspace{3pt}

\item[(ii)] \emph{sequentially complete} if every Cauchy sequence in $X$ converges,\vspace{3pt}

\item[(iii)] \emph{locally} (or \emph{Mackey}) \emph{complete} if every closed, bounded and absolutely convex set $B\subseteq X$ is a Banach disk, i.e., $\spann B$ endowed with its Minkowski functional $p_B$ is a Banach space.\diam{}
\end{myitemize}
\end{dfn}

Notice that local completeness has several other equivalent characterizations \cite[Ch 10.2]{Jarchow} that better explain its name. The following properties/constructions are well-known for all three types of completeness properties.

\begin{prop}\label{HYPOPROP} Let `hypo-' stand either for `quasi-', `sequentially' or `locally'.\vspace{2pt}

\begin{myitemize}

\item[(i)] For every lcHs $E$ there exists a hypo-complete space $\qcomp{E}$ together with a map $i\colon E\rightarrow\qcomp{E}$ which is injective, continuous, linear, open onto its range and has a dense range such that for any hypo complete space $F$ and any continuous linear map $u\colon E\rightarrow F$ there is a unique continuous map $v\colon\qcomp{E}\rightarrow F$ such that\begin{equation*}
\begin{tikzcd}
\qcomp{E}\arrow{r}{v} & F \\ 
E \arrow{u}{i}\arrow{ur}[swap]{u} & 
\end{tikzcd}
\end{equation*}
commutes. We call $\qcomp{E}$ the \emph{hypo-completion} of $E$.

\vspace{3pt}

\item[(ii)] If $f\colon E\rightarrow F$ is a morphism in $\HDLCS$, then there is a unique extension $\qcomp{f}\colon\qcomp{E}\rightarrow\qcomp{F}$ of $f$ to the hypo-completions, i.e., it holds $\qcomp{f}i_E=i_Ff$.

\vspace{3pt}

\item[(iii)] If $E$ is a hypo-complete lcHs and $F\subseteq E$ is a closed subspace that we endow with the subspace topology, then $F$ is again hypo-complete.

\vspace{3pt}

\item[(iv)] The topological product of finitely many hypo-complete lcHs is hypo-complete.

\vspace{3pt}

\item[(v)] There exists a hypo-complete lcHs that is not complete.
\vspace{3pt}

\item[(vi)] There exists a hypo-complete lcHs $E$ and a closed subspace $F\subseteq E$ such that $E/F$ is not hypo-complete.

\end{myitemize}
\end{prop}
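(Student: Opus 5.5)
This proposition collects standard facts, so I would prove each item separately and rely on the literature (Jarchow \cite{Jarchow}, Chapters 3.2 and 10.2) wherever a fact is classical.

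For (i) and (ii), I would realize $\qcomp{E}$ inside the completion $\widehat{E}$. Take $\qcomp{E}$ to be the intersection of all hypo-complete linear subspaces of $\widehat{E}$ that contain $E$, with the subspace topology. The first step is to check that an arbitrary intersection of hypo-complete subspaces of $\widehat{E}$ is again hypo-complete. For sequential completeness, a Cauchy sequence converges in $\widehat{E}$, and its limit lies in each member of the intersection. For quasi-completeness, a closed bounded subset $B$ of the intersection $D$ is closed and bounded in each member $D_j$ after taking the closure $\overline{B}^{D_j}$. This closure is complete, and any Cauchy net in $B$ has its $\widehat{E}$-limit in every $D_j$, hence in $D$; the limit lies in $B$ because $B$ is closed in $D$. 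For local completeness, I would use the characterization via Mackey–Cauchy sequences and argue in the same way.

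The map $i$ is then the inclusion, which is injective, open onto its range, and has dense range. For the universal property, I would extend $u$ to $\widehat{u}\colon\widehat{E}\to\widehat{F}$. The preimage $\widehat{u}^{-1}(F)\cap\qcomp{E}$ is a linear subspace containing $E$. The key point, and the main obstacle, is to show that this subspace is hypo-complete, using that $F$ is hypo-complete and that preimages of convergent (Mackey-)Cauchy objects are controlled. Once that holds, minimality forces it to equal $\qcomp{E}$, and we set $v=\widehat{u}|_{\qcomp{E}}$. Uniqueness follows from density of $E$ and the Hausdorff property of $F$. Part (ii) then follows by applying (i) to $i_F f$.

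Item (iii) is a routine check. A closed bounded (absolutely convex) set in $F$ is also closed and bounded in $E$, and a Cauchy sequence or net in $F$ converges in $E$ with its limit in $F$ because $F$ is closed. Item (iv) is similarly routine: bounded sets in a finite product are contained in products of bounded sets, and Cauchy nets and sequences converge coordinatewise.

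For (v) and (vi), I would cite classical examples. For (v), take $\ell^2$ with its weak topology; it is quasi-complete, hence sequentially and locally complete, but not complete. For (vi), I would quote the known examples of quotients of complete spaces that are not sequentially complete, and not locally complete, such as those in \cite{Jarchow} or \cite{DS12}. Since a complete space is hypo-complete in all three senses, and a non-locally-complete quotient is also neither sequentially nor quasi-complete, a single example covers all three cases at once.
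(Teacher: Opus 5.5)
Your proposal is correct and follows the paper's route for (i)--(iv). The paper also takes $\qcomp{E}$ to be the intersection of all hypo-complete subspaces between $E$ and $\widehat{E}$, derives the universal property from that of $\widehat{E}$, and defines $\qcomp{f}$ as the restriction of $\widehat{f}$; it cites the literature for the verifications you sketch.

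The step you single out as the main obstacle is no harder than your intersection argument, but the work is done by forward images, not preimages. Let $x\in\qcomp{E}$ be the limit of one of the following objects lying in $D=\widehat{u}^{-1}(F)\cap\qcomp{E}$: a Cauchy sequence, a Cauchy net in a closed bounded $B\subseteq D$, or a Mackey--Cauchy sequence. The limit exists in $\qcomp{E}$; in the quasi-complete case this is because the $\qcomp{E}$-closure of $B$ is closed and bounded, hence complete. Continuous linear maps preserve Cauchy nets, bounded sets and Mackey--Cauchy sequences, so $\widehat{u}$ carries the object to one of the same kind in $F$. That image converges in $F$ because $F$ is hypo-complete. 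By continuity of $\widehat{u}$ and the Hausdorff property of $\widehat{F}$, $\widehat{u}(x)\in F$, so $x\in D$, and in the quasi-complete case also $x\in B$, since $B$ is closed in $D$.

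In (v), your example $(\ell^2,\sigma(\ell^2,\ell^2))$ differs from the paper's references but works. Weakly closed bounded sets are weakly compact, by the uniform boundedness principle and reflexivity. The completion is the algebraic dual of $(\ell^2)'$, which is strictly larger than $\ell^2$.

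In (vi), reducing all three cases to a single non-locally-complete quotient of a complete space is exactly the paper's argument, but your citation is vague. The concrete source is Dierolf's theorem \cite{Dier75}: every lcHs is a quotient of a complete lcHs by a closed subspace. Apply it to an incomplete normed space, which is not locally complete because its closed unit ball is not a Banach disk.
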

\begin{proof}(i) The hypo-completion can be constructed via
$$
\qcomp{E}\hspace{4pt}=\hspace{-4pt}\mathop{\textstyle\bigcap}_{\stackrel{E\subseteq F\subseteq \widehat{E}}{\stackrel{F \text{ hypo-}}{\scriptscriptstyle\text{complete}}}}\hspace{-3pt}F
$$ 
where $\widehat{E}$ is the usual completion. The map $i$ is the corestriction of the inclusion map $E\subseteq\widehat{E}$ and the universal property can be derived from that of $\widehat{E}$. For the three cases this can be found in \cite[\S\,23.1]{KI}, \cite[Dfn 5.1.21]{BPC} and \cite[p.~14]{ORS} (where the intersection is taken over subspaces of the quasi-completion which yields however the same space).

\smallskip

(ii) Consider $\widehat{f}\colon\widehat{E}\rightarrow\widehat{F}$ and define $\qcomp{f}$ as its (co)restriction. That the latter is well-defined can be found in \cite[\S\,23.1(4)]{KI} for the quasi-completion and in \cite[Lem 5.1.23\,--\,Prop 5.1.25]{BPC} for the local completion. For the sequential completion one can argue analogously.

\smallskip

(iii) For quasi-complete or sequentially complete $E$ this is clear, for locally complete it can be found in \cite[Prop 5.1.13]{BPC}.

\smallskip

(iv) See \cite[Prop 4.4.3]{Jarchow} for quasi- and sequential completeness. For local completeness the proof is straightforward.

\smallskip

(v) In view of the implications
\begin{center}
complete\;$\Longrightarrow$\;quasi-complete\;$\Longrightarrow$\;sequentially complete\;$\Longrightarrow$\;locally complete
\end{center}
it suffices to find a quasi-complete lcHs that fails to be complete. Such an example is given in \cite[\S\,23.1]{KI} or, with more details, in \cite[Chapter III, \S\,1, no.\ 6, text after Dfn 6]{EVT}.

\smallskip

(vi) Let $G$ be any lcHs that is not locally complete, e.g, an incomplete normed space. Then by \cite{Dier75} there exists a complete lcHs $E$ and a closed subspace $F\subseteq E$ such that $G\cong{}E/F$.
\end{proof}

For the rest of this section hypo-complete will stand for either one of the properties quasi-complete, sequentially complete or locally complete; we denote by $\HCOM$ the full subcategory of $\HDLCS$ formed by the hypo-complete lcHs. By $\overbracket[0.6pt][1.5pt]{\hspace{2.5pt}\cdot\hspace{2.5pt}}$ or $\hc\colon\HDLCS\rightarrow\HCOM$ we denote the hypo-completion functor that sends a space $E$ to its hypo-completion $\overbracket[0.6pt][1.5pt]{E\hspace{1pt}}$ and a morphism $f\colon E\rightarrow F$ to its extension $\overbracket[0.6pt][1.5pt]{f\hspace{1pt}}\colon\overbracket[0.6pt][1.5pt]{E\hspace{1pt}}\rightarrow\overbracket[0.6pt][1.5pt]{F\hspace{1pt}}$.

\begin{lem}\label{LEM-1} The functor $\hc$ is left adjoint to the inclusion functor $\inc$. 
\end{lem}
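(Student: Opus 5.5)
The adjunction will be established by exhibiting, for every $E\in\HDLCS$ and every $F\in\HCOM$, a bijection
$$
\Hom_{\HCOM}(\hc E,F)\longrightarrow\Hom_{\HDLCS}(E,\inc F),\qquad v\longmapsto v\circ i_E,
$$
and checking that it is natural in both variables. Equivalently, the maps $i_E\colon E\rightarrow\inc\hc E$ from Proposition \ref{HYPOPROP}(i) should be the components of a natural transformation $\eta\colon\id_{\HDLCS}\rightarrow\inc\circ\hc$, and each $i_E$ should be universal from $E$ to $\inc$. The standard criterion for adjunctions via universal arrows (e.g.\ \cite{ML}) then gives the claim, and it also shows that the functor defined via universal arrows agrees with $\hc$ as defined above.

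The first step is bijectivity. Surjectivity is exactly the existence part of Proposition \ref{HYPOPROP}(i): any continuous linear $u\colon E\rightarrow F$ with $F$ hypo-complete factors as $u=v i_E$. Injectivity is the uniqueness part. Alternatively, if $v_1 i_E=v_2 i_E$, then $v_1$ and $v_2$ are continuous maps into a Hausdorff space that agree on the dense subspace $i_E(E)$, so $v_1=v_2$.

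The second step is naturality. Naturality in $F$ is immediate, since $(w v) i_E=w(v i_E)$ for $w\colon F\rightarrow F'$ in $\HCOM$. Naturality in $E$ requires, for $f\colon E\rightarrow E'$ and $v\colon\hc E'\rightarrow F$, that $(v\circ\hc f)\circ i_E=v\circ i_{E'}\circ f$. This follows from the defining identity $\qcomp{f}\,i_E=i_{E'}f$ in Proposition \ref{HYPOPROP}(ii). That same identity is precisely the naturality of $\eta$.

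Before this, one has to confirm that $\hc$ really is a functor, i.e.\ that $\qcomp{gf}=\qcomp{g}\,\qcomp{f}$ and $\qcomp{\id}=\id$. Both identities hold by uniqueness in Proposition \ref{HYPOPROP}(ii), using again density of $i_E(E)$ and the Hausdorff property. I do not expect any real obstacle: the only point deserving care is that uniqueness of extensions relies on density of the range of $i_E$ together with Hausdorffness of the target, and both are supplied by Proposition \ref{HYPOPROP}(i).
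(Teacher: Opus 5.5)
Your proposal is correct and takes essentially the same approach as the paper, which also proves the lemma by showing that restriction $f\mapsto f|_E$ (precomposition with $i_E$) is a bijection $\Hom_{\HCOM}(\hc(E),F)\to\Hom_{\HDLCS}(E,\inc(F))$, natural in $E$ and $F$, with inverse given by extension to the hypo-completion. You spell out the density/Hausdorff uniqueness argument and the functoriality and naturality checks, which the paper leaves implicit.
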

\begin{proof}For $E\in\HDLCS$ and $F\in\HCOM$ the restriction $f\mapsto f|_E$ yields an isomorphism
$$
\Hom_{\hspace{1pt}\HCOM}(\hc(E),F)\stackrel{\hspace{-2pt}\sim}{\longrightarrow}\Hom_{\hspace{1pt}\HDLCS}(E,\inc(F))
$$
of vector spaces with inverse $f\mapsto{\overbracket[0.6pt][1.5pt]{f\hspace{1pt}}}$ and the isomorphism is natural in $E$ and $F$.
\end{proof}

From Proposition \ref{HYPOPROP} we get that $\HCOM$ is an additive category whose finite biproducts are precisely those of $\HDLCS$. In particular, $\HCOM\subseteq\HDLCS$ is a full additive subcategory which is by Lemma \ref{LEM-1} reflective. One may consider $\hc\colon\LCS\rightarrow\HCOM$ also on the category of all possibly non-Hausdorff lcs; Prosmans did this for the (normal) completion functor. The latter then however forms the Hausdorff-completion, i.e., it first quotients out the closure of zero, and then forms the completion. If $E$ is not Hausdorff, the canonical maps $E\rightarrow\widehat{E}$ and $E\rightarrow\overbracket[0.6pt][1.5pt]{E\hspace{1pt}}$ will not be injective.

\begin{prop}\label{PROP-1} The category $\HCOM$ is preabelian; for $f\colon E\rightarrow F$ in $\HCOM$ we have:\vspace{3pt}
\begin{compactitem}
\item[(i)] The kernel of $f$ is the inclusion map $(f^{-1}(0),\tau_{E|f^{-1}(0)})\rightarrow E$.

\vspace{3pt}

\item[(ii)] The cokernel of $f$ is the composition $F\xrightarrow{\hspace{1pt}\scriptscriptstyle q\hspace{1pt}}F/\overline{f(E)}\xrightarrow{\hspace{1pt}\scriptscriptstyle i\hspace{1pt}}\overbracket[0.6pt][1.5pt]{F/\overline{f(E)}\hspace{2pt}}$.

\vspace{4pt}

\item[(iii)] The image of $f$ is the inclusion map $(\overline{f(E)}^{F},\tau_{F|\overline{f(E)}^F})\rightarrow F$. \vspace{0pt}

\item[(iv)] The coimage of $f$ is the composition $E\xrightarrow{\hspace{1pt}\scriptscriptstyle q\hspace{1pt}}E/f^{-1}(0)\xrightarrow{\hspace{1pt}\scriptscriptstyle i\hspace{1pt}}\overbracket[0.6pt][1.5pt]{E/f^{-1}(0)\hspace{2pt}}$.
\end{compactitem}
\end{prop}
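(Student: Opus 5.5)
I will check the universal properties of (i) and (ii) directly, using that $\HCOM$ is a full additive subcategory of $\HDLCS$, closed under finite products and closed subspaces (Proposition \ref{HYPOPROP}(iii),(iv)) and reflective via $\hc$ (Lemma \ref{LEM-1}). Parts (iii) and (iv) then follow formally from $\im f=\ker\cok f$ and $\coim f=\cok\ker f$. Preabelianness is just the existence of kernels and cokernels, which (i) and (ii) provide.

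For (i), the set $f^{-1}(0)$ is a closed subspace of the hypo-complete space $E$, so with the subspace topology it is hypo-complete by Proposition \ref{HYPOPROP}(iii). Let $t\colon T\to E$ in $\HCOM$ satisfy $ft=0$. Then $t$ factors set-theoretically through $f^{-1}(0)$, and the corestriction is continuous for the subspace topology. Uniqueness holds because the inclusion is injective. Hence the inclusion is the kernel in $\HCOM$, and it agrees with the kernel in $\HDLCS$.

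For (ii), recall the standard fact that in $\HDLCS$ the cokernel of $f$ is the quotient map $q\colon F\to F/\overline{f(E)}$. Since $\hc$ is a left adjoint, it preserves colimits, in particular cokernels. So the cokernel of $\hc(f)$ in $\HCOM$ is $\hc(q)$ composed with the identification $\hc(F)\cong F$. The unit $i_F$ is an isomorphism for $F$ hypo-complete, because $\qcomp{F}\subseteq\widehat F$ is the intersection over a family that contains $F$ itself. By naturality of the unit, $\hc(q)\circ i_F=i\circ q$, which gives exactly $iq$ as claimed. One can also argue directly: if $g\colon F\to G$ with $G\in\HCOM$ kills $f(E)$, then by continuity and the Hausdorff property of $G$ it kills $\overline{f(E)}$. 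It therefore factors continuously through $F/\overline{f(E)}$, which is Hausdorff because the subspace is closed. By Proposition \ref{HYPOPROP}(i) it then factors uniquely through the hypo-completion. Uniqueness of the whole factorization uses that $q$ is surjective and $i$ has dense range into a Hausdorff space.

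For (iii), I compute $\ker(iq)$. Since $i$ is injective, $\ker(iq)=\ker q=\overline{f(E)}$ as a set. By (i) it carries the subspace topology of $F$, and it is hypo-complete by Proposition \ref{HYPOPROP}(iii). For (iv), $\ker f$ is $f^{-1}(0)$ with the subspace topology, which is closed. Applying (ii) to the inclusion $f^{-1}(0)\to E$, whose range is already closed, gives $E\to E/f^{-1}(0)\to\qcomp{E/f^{-1}(0)}$.

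The only delicate point is in (ii). One must check that the quotient is Hausdorff, so that the hypo-completion of Proposition \ref{HYPOPROP}(i) applies and that map is injective, and that uniqueness survives the dense-range step. I expect this to be routine once closure is taken before quotienting.
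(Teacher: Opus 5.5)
Your proof is correct and essentially the paper's: (i) is argued the same way, and your direct argument for (ii) is exactly the paper's proof. That argument factors through the $\HDLCS$-cokernel $q$, then through the hypo-completion via Proposition \ref{HYPOPROP}(i), with uniqueness from surjectivity of $q$. Your adjunction argument via Lemma \ref{LEM-1} is the alternative the paper itself mentions right after the proof, and for (iii) and (iv) you simply spell out what the paper summarizes as following from (i) and (ii).
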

\begin{proof} (i) As we work with Hausdorff spaces, $f^{-1}(0)\subseteq E$ is closed and thus by Proposition \ref{HYPOPROP} again hypo-complete. Since the inclusion $(f^{-1}(0),\tau_{E|f^{-1}(0)})\rightarrow E$ is a kernel of $f$ in $\HDLCS$, it is in $\HCOM$, too.

\smallskip

\noindent(ii) We have to check that $iq$ satisfies the universal property of the cokernel. For this let $g\colon F\rightarrow G$ be a morphism in $\HCOM$ such that $gf=0$. Since $q$ is the cokernel of $f$ in $\HDLCS$ we get a unique $h\colon F/\overline{f(E)}\rightarrow G$ with $hq=g$. Applying Proposition \ref{HYPOPROP}(i) yields a unique linear and continuous map
$$
k\colon \overbracket[0.6pt][1.5pt]{F/\overline{f(E)}\hspace{2pt}}\rightarrow G
$$
with $ki=h$. It follows $kiq = g$. If $k'iq = g$ holds too, then by surjectivity of $q$ we get $k'i=ki=h$ which implies $k'=k$.

\smallskip

\noindent(iii) and (iv) follow from (i) and (ii).
\end{proof}

We note that the first two statements in Proposition \ref{PROP-1} could also be derived by the abstract categorical arguments in \cite[Proposition 4.5.15]{Riehl} employing Lemma \ref{LEM-1}.

\begin{cor}\label{COR-1} Let $f\colon E\rightarrow F$ be a morphism in $\HCOM$. Then\vspace{2pt}
\begin{compactitem}
\item[(i)] $f$ is a monomorphism iff $f$ is injective,\vspace{2pt}

\item[(ii)] $f$ is an epimorphism iff $f$ has a dense range,\vspace{2pt}

\item[(iii)] its parallel $\bar{f}\colon\coim(f)\rightarrow\im(f)$ is always an epimorphism.\end{compactitem}
\end{cor}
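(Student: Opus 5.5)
The three statements follow from the explicit kernel and cokernel formulas of Proposition \ref{PROP-1}, combined with the standard facts that in a preabelian category $f$ is monic iff $\ker f=0$ and epic iff $\cok f=0$.

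For (i), a monomorphism $f$ has $\ker f=0$. By Proposition \ref{PROP-1}(i) the kernel is the inclusion of $f^{-1}(0)$, so $f^{-1}(0)=\{0\}$ and $f$ is injective. Conversely, injective continuous linear maps are clearly left-cancellable on continuous maps.

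For (ii), if $f$ has dense range and $gf=hf$, then $g$ and $h$ agree on $f(E)$. Both are continuous into a Hausdorff space, so they agree on $\overline{f(E)}=F$, and $f$ is epic. Conversely, assume $f$ is epic. Then $\cok f=iq$ from Proposition \ref{PROP-1}(ii) vanishes, i.e.\ the hypo-completion of $F/\overline{f(E)}$ is zero. Since the map $i$ into the hypo-completion is injective by Proposition \ref{HYPOPROP}(i), the Hausdorff quotient $F/\overline{f(E)}$ is itself zero, hence $\overline{f(E)}=F$. This injectivity is the only point needing care: the quotient is Hausdorff because $\overline{f(E)}$ is closed.

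For (iii), Proposition \ref{PROP-1}(iii),(iv) gives $\coim f=\overbracket[0.6pt][1.5pt]{E/f^{-1}(0)\hspace{2pt}}$ and $\im f=\overline{f(E)}$ with the subspace topology of $F$. The parallel $\bar f$ is the unique map with $k_{\im}\,\bar f\,iq=f$, and it is obtained as follows:
\begin{itemize}
\item The induced map $E/f^{-1}(0)\to\overline{f(E)}$ is continuous linear with range $f(E)$.
\item It extends to the hypo-completion via Proposition \ref{HYPOPROP}(i), since $\overline{f(E)}$ is hypo-complete as a closed subspace (Proposition \ref{HYPOPROP}(iii)).
\end{itemize}
Hence the range of $\bar f$ contains $f(E)$, which is dense in $\overline{f(E)}$. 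By (ii), applied in $\HCOM$, $\bar f$ is an epimorphism.

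The main subtlety is the step in (iii) of identifying $\bar f$ concretely as this extension, rather than just invoking abstract existence. To do this, check that the extended map $v$ satisfies $k_{\im}\,v\,iq=f$ and then use uniqueness of the parallel.
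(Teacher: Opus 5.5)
Your proposal is correct and follows essentially the same route as the paper. Parts (i) and (ii) rest on the preabelian criteria $\ker f=0$ and $\cok f=0$ together with the explicit formulas of Proposition \ref{PROP-1}, and part (iii) identifies $\bar f$ as the extension to the hypo-completion of $E/f^{-1}(0)$ of the dense-range map $E/f^{-1}(0)\to\overline{f(E)}$ and then applies (ii); your direct arguments for the easy directions and your uniqueness check for the parallel only spell out details the paper leaves implicit.
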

\begin{proof} (i) and (ii) As $\HCOM$ is preabelian, $f$ is monic iff $\ker f=0$ and this is precisely the case if $f$ is injective. Dually, $f$ is epic iff $\cok f=0$ and this holds precisely if $f(E)\subseteq F$ is dense. 

\smallskip

(iii) Using Proposition \ref{PROP-1} we see that $\bar{f}\colon\overbracket[0.6pt][1.5pt]{E/f^{-1}(0)\hspace{2pt}}\longrightarrow \overline{f(E)}$ arises by applying the hypo-completion functor to the map $E/f^{-1}(0)\rightarrow \overline{f(E)}$. As $\overline{f(E)}$ carries the topology induced by $F$, the latter map has a dense range and thus so does $\overline{f}$. By (ii) it is thus an epimorphism.
\end{proof}

Corollary \ref{COR-1}(iii) means that $\HCOM$ is right semiabelian \cite{HSW, KW}, while the following example shows that $\HCOM$ is not left semiabelian.

\begin{ex}\label{EX-1} There is a morphism $f\colon E\rightarrow F$ in $\HCOM$ such that $\bar{f}$ is not a monomorphism.
\end{ex}
\begin{proof} We proceed exactly as in \cite[Prop 3.1.6]{SiegThesis}: Let $E$ be a hypo-complete space and $U\subseteq E$ a closed subspace such that $E/U$ is not hypo-complete, cf.~Proposition \ref{HYPOPROP}, and define $F:=\overbracket[0.6pt][1.5pt]{E/U\hspace{2pt}}$. Let $g\colon E\rightarrow F$ be the natural map and select $x_0\in F\backslash g(E)$.  Define
$$
f\colon E\oplus\KK\rightarrow F,\;(e,\lambda)\mapsto g(e)-\lambda{}x_0.
$$
Then we get $\coim f = \overbracket[0.6pt][1.5pt]{\zfrac{E\oplus\KK}{\ker f}\hspace{2pt}}= \overbracket[0.6pt][1.5pt]{\zfrac{E\oplus\KK}{U\oplus\{0\}}\hspace{2pt}} = F\oplus\KK $ and $\im f = \overline{f(E)}=F$, and $\bar{f}\colon F\oplus\KK\rightarrow F$ is given by $(x,\lambda)\mapsto x-\lambda x_0$ which is not injective.
\end{proof}

\begin{prop}\label{PROP-2} A morphism $f\colon E\rightarrow F$ in $\HCOM$ is strict iff $f$ is open onto its range and $\overbracket[0.6pt][1.5pt]{f(E)\hspace{1pt}}=\overline{f(E)}$ holds in the sense that $\overbracket[0.6pt][1.5pt]{j\hspace{1pt}}$ is an isomorphism, where $j\colon f(E)\rightarrow\overline{f(E)}$ denotes the inclusion. 
\end{prop}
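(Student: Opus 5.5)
We use Proposition \ref{PROP-1}(iii),(iv) and the proof of Corollary \ref{COR-1}(iii). The parallel $\bar f\colon\coim f\rightarrow\im f$ is the hypo-completion of the map $f_0\colon E/f^{-1}(0)\rightarrow\overline{f(E)}$, where $\overline{f(E)}$ carries the topology induced by $F$. More precisely, $\bar f=\qcomp{f_0}$ once we use that $\overline{f(E)}$ is already hypo-complete by Proposition \ref{HYPOPROP}(iii), so its hypo-completion is itself. Factor $f_0=j\circ\tilde f$, where $\tilde f\colon E/f^{-1}(0)\rightarrow f(E)$ is the continuous linear bijection induced by $f$, and $j\colon f(E)\rightarrow\overline{f(E)}$ is the inclusion. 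Here $f(E)$ carries the subspace topology of $F$, which is Hausdorff. Functoriality of $\hc$ (Proposition \ref{HYPOPROP}(ii) with uniqueness) gives
$$\bar f=\qcomp{j}\circ\qcomp{\tilde f}.$$
The approach is to show that $\bar f$ is an isomorphism iff both factors are.

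For ``$\Leftarrow$'': $f$ being open onto its range means precisely that $\tilde f$ is a topological isomorphism. Then $\qcomp{\tilde f}$ is an isomorphism, since $\hc$ is a functor. By hypothesis $\qcomp{j}$ is an isomorphism, hence so is $\bar f$, i.e.\ $f$ is strict.

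For ``$\Rightarrow$'': suppose $\bar f$ is an isomorphism. The plan is to first show that $\tilde f$ is open, and then deduce that $\qcomp{j}$ is an isomorphism from $\qcomp{j}=\bar f\circ(\qcomp{\tilde f})^{-1}$. Consider the commutative square
$$i_{\overline{f(E)}}\circ f_0=\bar f\circ i_{E/f^{-1}(0)}.$$
Here $i_{\overline{f(E)}}$ is the identity, and $i_{E/f^{-1}(0)}$ is a topological embedding by Proposition \ref{HYPOPROP}(i). So $f_0=\bar f\circ i$ is a composition of embeddings, hence a topological embedding. Thus $\tilde f$ is a homeomorphism onto $f(E)$ with the subspace topology, i.e.\ $f$ is open onto its range. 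Consequently $\qcomp{\tilde f}$ is an isomorphism, and $\qcomp{j}=\bar f\circ\qcomp{\tilde f}^{-1}$ is an isomorphism as well.

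The main obstacle is bookkeeping rather than substance. One must justify that the hypo-completion of a closed subspace of a hypo-complete space is the space itself, i.e.\ $i$ is the identity there. This follows from the universal property in Proposition \ref{HYPOPROP}(i) together with injectivity and density of $i$. One must also check that the identification $\bar f=\qcomp{f_0}$ matches the abstract parallel map coming from Proposition \ref{PROP-1}. I would verify the latter by noting that both maps are continuous and agree on the dense image of $E/f^{-1}(0)$, using uniqueness of extensions.
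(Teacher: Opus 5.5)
Your proof is correct and follows essentially the same route as the paper. Both factor the parallel through $E/f^{-1}(0)\to f(E)\xrightarrow{\,j\,}\overline{f(E)}$, both use that an isomorphism $\bar f$ composed with the embedding $E/f^{-1}(0)\to\qcomp{E/f^{-1}(0)}$ gives a topological isomorphism onto $f(E)$, and both then identify $\qcomp{j}$ with $\bar f$ up to that isomorphism. The only cosmetic difference is that you obtain openness of $f$ directly via the open quotient map, whereas the paper cites Prosmans' characterization of strict morphisms in $\LCS$.
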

\begin{proof} We consider the following diagram in the category $\HDLCS$, where $f(E)$ and $\overline{f(E)}$ are endowed with the subspace topology of $F$, $\bar{f}^{\hspace{2pt}\raisebox{2.5pt}{$\scriptscriptstyle\HCOM$}}$ is the parallel in $\HCOM$ and $\bar{f}^{\hspace{2pt}\raisebox{2.5pt}{$\scriptscriptstyle\LCS$}}$ is the parallel in $\LCS$ (which is bijective and continuous, but in general not open); the maps $i$ and $j$ are inclusions of topological subspaces:
\begin{equation*}
\begin{tikzcd}[row sep =27pt, column sep = 34pt]
E/f^{-1}(0)\arrow{r}{\bar{f}^{\hspace{2pt}\raisebox{1pt}{$\scriptscriptstyle\LCS$}}}\arrow{d}[swap]{i\hspace{1pt}} & f(E) \arrow{d}{j}  \\
\overbracket[0.6pt][1.5pt]{E/f^{-1}(0)\hspace{1pt}} \arrow{r}[swap]{\bar{f}^{\hspace{2pt}\raisebox{2.5pt}{$\scriptscriptstyle\HCOM$}}} & \overline{f(E)}.
\end{tikzcd}
\end{equation*}
Observe that since $\overline{f(E)}$ is hypo-complete, we get indeed $\overbracket[0.6pt][1.5pt]{j\hspace{2pt}}\colon\overbracket[0.6pt][1.5pt]{f(E)}\rightarrow\overline{f(E)}$ by extending $j$ to the hypo-completion of its domain, see Proposition \ref{HYPOPROP}.

\smallskip

\textquotedblleft{}$\Longrightarrow$\textquotedblright{} Assume now that $f$ is strict in $\HCOM$. Then $\bar{f}^{\hspace{2pt}\raisebox{2.5pt}{$\scriptscriptstyle\HCOM$}}$ is an isomorphism by definition and it follows that its restriction $\bar{f}^{\hspace{2pt}\raisebox{2.5pt}{$\scriptscriptstyle\LCS$}}$ is an isomorphism, too. If we identify $E/f^{-1}(0)\equiv{}f(E)$ via $\bar{f}^{\hspace{2pt}\raisebox{2.5pt}{$\scriptscriptstyle\LCS$}}$, we get that $\overbracket[0.6pt][1.5pt]{j\hspace{2pt}}=\bar{f}^{\hspace{2pt}\raisebox{2.5pt}{$\scriptscriptstyle\HCOM$}}$ is an isomorphism. Moreover, $f$ is then strict in $\LCS$ and thus open onto its range by \cite[Cor 2.1.9]{Prosmans}.

\smallskip

\textquotedblleft{}$\Longleftarrow$\textquotedblright{} Let $f$ be open onto its range. Then $\bar{f}^{\hspace{2pt}\raisebox{2.5pt}{$\scriptscriptstyle\LCS$}}$ is an isomorphism and we may identify $E/f^{-1}(0)\cong{}f(E)$. Now, $\bar{f}^{\hspace{2pt}\raisebox{2.5pt}{$\scriptscriptstyle\HCOM$}}=\overbracket[0.6pt][1.5pt]{j\hspace{2pt}}$ is an isomorphism by assumption.
\end{proof}

\begin{cor}\label{COR-2} If a morphism $f\colon E\rightarrow F$ in $\HCOM$ is open onto its range and its range is closed, then $f$ is strict in $\HCOM$.
\end{cor}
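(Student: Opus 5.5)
This is a direct consequence of Proposition \ref{PROP-2}. Its first condition is that $f$ be open onto its range, which we assume. So it remains only to check the second condition: the extension $\qcomp{j}\colon\qcomp{f(E)}\rightarrow\overline{f(E)}$ of the inclusion $j\colon f(E)\rightarrow\overline{f(E)}$ must be an isomorphism.

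Since $f(E)$ is closed in $F$, we have $\overline{f(E)}=f(E)$ with the subspace topology of $F$, and $j$ is the identity of $f(E)$. By Proposition \ref{HYPOPROP}(iii), $f(E)$ is hypo-complete, being a closed subspace of the hypo-complete space $F$.

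It follows that the canonical map $i\colon f(E)\rightarrow\qcomp{f(E)}$ is an isomorphism. One way to see this is via the universal property in Proposition \ref{HYPOPROP}(i). Applied to $u=\id_{f(E)}$, it gives a map $v\colon\qcomp{f(E)}\rightarrow f(E)$ with $vi=\id$. Then $iv$ and $\id$ both restrict to $i$ along $i$, so uniqueness gives $iv=\id$. Alternatively, one can read this off from the intersection construction in the proof of Proposition \ref{HYPOPROP}(i), since $f(E)$ itself is one of the spaces being intersected.

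With this, $\qcomp{j}$ is the unique extension of $j$ along the isomorphism $i$, namely $\qcomp{j}=j\,i^{-1}$. It is therefore an isomorphism, and Proposition \ref{PROP-2} yields that $f$ is strict. No step here is a genuine obstacle; the only point needing care is identifying $\qcomp{f(E)}$ with $f(E)$ through the universal property.
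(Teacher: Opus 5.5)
Your proposal is correct and follows the same route as the paper. Closedness of $f(E)$ makes it hypo-complete by Proposition~\ref{HYPOPROP}(iii), so it coincides with its hypo-completion and $\qcomp{j}$ is an isomorphism, whence Proposition~\ref{PROP-2} applies; your universal-property argument only spells out the identification $\qcomp{f(E)}\cong f(E)$, which the paper takes for granted.
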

\begin{proof} If $f(E)\subseteq F$ is closed, then $f(E)$ is hypo-complete and coincides with its hypo-completion, thus $\overbracket[0.6pt][1.5pt]{j\hspace{2pt}}=\id_{f(E)}$ is an isomorphism.
\end{proof}

\begin{rmk}\label{RMK-1} In the category $\COM$ of complete lcHs a morphism $f\colon E\rightarrow F$ is strict iff it is open onto its range by \cite[Prop 4.1.9 together with Cor 2.1.9]{Prosmans}\,---\,no further condition is needed. The reason is that taking a closure in a complete superspace and forming the completion are the same thing. In $\HCOM$ this is in general \emph{not} the case. In particular, it might happen that $\overbracket[0.6pt][1.5pt]{f(E)\hspace{1pt}}\subset\overline{f(E)}$ is a proper subspace: Take for example a hypo-complete space $E$ which is not complete as in Proposition \ref{HYPOPROP} and let $f\colon E\rightarrow \widehat{E}$ be the inclusion. Then $f(E)\cong E$ which is hypo-complete, hence $\overbracket[0.6pt][1.5pt]{f(E)\hspace{1pt}}=f(E)$ holds. On the other hand the closure, taken in $\widehat{E}$, equals $\overline{f(E)}=\widehat{E}$ and is strictly larger than $E$ by construction.\diam{}
\end{rmk}

Our next aim is to identify the kernels\,($\equiv$\,strict monomorphisms) and the cokernels\,($\equiv$\,strict epimorphisms) in $\HCOM$.

\begin{prop}\label{PROP-3} Let $f\colon E\rightarrow F$ be a morphism in $\HCOM$. Then\vspace{2pt}
\begin{myitemize}
\item[(i)] $f$ is a kernel iff $f$ is injective, open onto its range and its range is closed.\vspace{2pt}

\item[(ii)] $f$ is a cokernel iff $f$ is open onto its range and $F=\qcomp{f(E)}$ holds.\vspace{2pt}
\end{myitemize}
\end{prop}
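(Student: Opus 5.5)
In a preabelian category, $f$ is a kernel iff $f$ is a strict monomorphism, i.e.\ $f$ is monic and $f\cong\im f$; dually for cokernels. The plan is to combine this with Corollary \ref{COR-1}(i),(ii) and the explicit formulas of Proposition \ref{PROP-1}, and to reuse Proposition \ref{PROP-2} and Corollary \ref{COR-2} for strictness.

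\emph{Part (i).} Suppose $f$ is a kernel. Then $f=\ker g$ for some $g$, so by Proposition \ref{PROP-1}(i) $f$ is, up to isomorphism, the inclusion $(g^{-1}(0),\tau_{F|g^{-1}(0)})\rightarrow F$. This inclusion is injective, open onto its range, and its range is closed because $F$ is Hausdorff. Conversely, assume $f$ is injective, open onto its range, and has closed range. Then $f(E)$ with the subspace topology is closed in $F$, hence hypo-complete by Proposition \ref{HYPOPROP}(iii). Also $f\colon E\rightarrow f(E)$ is a topological isomorphism, so $f$ is isomorphic to the inclusion $\overline{f(E)}\rightarrow F$. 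By Proposition \ref{PROP-1}(iii),(ii) this inclusion is $\im f=\ker\cok f$, hence a kernel. Alternatively, Corollary \ref{COR-2} shows $f$ is strict, and a strict monomorphism is a kernel.

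\emph{Part (ii), forward direction.} Let $f$ be a cokernel, say $f=\cok h$ with $h\colon D\rightarrow E$. Proposition \ref{PROP-1}(ii) says $f$ is isomorphic to $E\xrightarrow{q}E/\overline{h(D)}\xrightarrow{i}\qcomp{E/\overline{h(D)}}$. The composition $iq$ is open onto its range, since $q$ is open and $i$ is a topological embedding. Its range is $i(E/\overline{h(D)})$, and the hypo-completion of this range, taken inside $F$, is all of $F$ by the construction in Proposition \ref{HYPOPROP}(i). Transporting along the isomorphism gives $F=\qcomp{f(E)}$.

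\emph{Part (ii), converse.} Assume $f$ is open onto its range and $F=\qcomp{f(E)}$. This means the inclusion $j_0\colon f(E)\rightarrow F$ extends to an isomorphism $\qcomp{f(E)}\cong F$. Since $\qcomp{f(E)}\subseteq\overline{f(E)}$ by the intersection construction, the range is dense, so $f$ is epic by Corollary \ref{COR-1}(ii). Openness gives $E/f^{-1}(0)\cong f(E)$ topologically. Then Proposition \ref{PROP-1}(iv) yields $\coim f=\qcomp{E/f^{-1}(0)}\cong\qcomp{f(E)}\cong F$. The remaining step is to check that under this identification the coimage map $E\rightarrow\coim f$ corresponds to $f$, so that $f\cong\coim f=\cok\ker f$ is a cokernel. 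I will verify this on the dense subspace $E/f^{-1}(0)$ and then invoke the uniqueness in Proposition \ref{HYPOPROP}(i).

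The main obstacle is making precise the meaning of ``$F=\qcomp{f(E)}$''. I will read it as: the canonical map from the hypo-completion of $f(E)$ (with the subspace topology from $F$) into $F$, obtained from the universal property, is an isomorphism. With this reading both directions above are exactly the universal property from Proposition \ref{HYPOPROP}(i). The point to check is that the hypo-completion of a subspace, computed abstractly, agrees with the intersection inside $\widehat{F}$ used in that construction. This follows because $\widehat{f(E)}=\overline{f(E)}^{\widehat F}$.
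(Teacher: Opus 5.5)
Your proof is correct and follows essentially the paper's route. The forward directions are read off from the explicit kernel and cokernel formulas of Proposition~\ref{PROP-1}. For the converse of (i), the paper uses monic plus strict via Corollary~\ref{COR-2}, which you mention; your main argument instead identifies $f$ with the inclusion of the closed subspace, which Proposition~\ref{PROP-1}(iii) shows is $\im f$. For the converse of (ii), the paper checks the universal property of $\cok(\ker f)$ directly: it factors through $E/f^{-1}(0)\cong f(E)$ and extends to $\qcomp{f(E)}=F$, which is exactly the argument packaged in the coimage formula of Proposition~\ref{PROP-1}(iv) that you invoke. Your reading of $F=\qcomp{f(E)}$ matches the paper's convention from Proposition~\ref{PROP-2}.
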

\begin{proof} (i) \textquotedblleft{}$\Longrightarrow$\textquotedblright{} Follows from Proposition \ref{HYPOPROP}.

\smallskip

\textquotedblleft{}$\Longleftarrow$\textquotedblright{} If $f$ is injective, open onto its range and has a closed range, then it is monic by Corollary \ref{COR-1} and strict by Corollary \ref{COR-2}. 
 
\smallskip
 
(ii) \textquotedblleft{}$\Longrightarrow$\textquotedblright{} Follows from Proposition \ref{HYPOPROP}.

\smallskip

\textquotedblleft{}$\Longleftarrow$\textquotedblright{} Let $f$ be open onto its range and assume $F=\qcomp{f(E)}$. We form the kernel of $f$, denote it by $i\colon f^{-1}(0)\rightarrow E$ and claim that $f=\cok i$ holds in $\HCOM$. We have $fi=0$ by definition. Let $hi=0$ for some $h\colon E\rightarrow G$. We form the parallel $\bar{f}^{\hspace{2pt}\raisebox{1pt}{$\scriptscriptstyle\LCS$}}$ of $f$ in $\LCS$ and obtain in $\LCS$ the factorization 
\begin{equation*}
\begin{tikzcd}
E\arrow{r}{f}\arrow{d}[swap]{q} & F \\
E/f^{-1}(0) \arrow{r}[swap]{\bar{f}^{\hspace{2pt}\raisebox{1pt}{$\scriptscriptstyle\LCS$}}} & f(E)\arrow{u}[swap]{j}
\end{tikzcd}
\end{equation*}
where $q$ is the quotient map, $j$ is the inclusion and $\bar{f}^{\hspace{2pt}\raisebox{1pt}{$\scriptscriptstyle\LCS$}}$ is an isomorphism. As $q$ is the cokernel of $i$ in $\LCS$, we get a unique $\alpha\colon E/f^{-1}(0)\rightarrow G$ with $\alpha\hspace{1pt}q = h$ and may consider the composition
$$
\alpha\hspace{1pt}(\bar{f}^{\hspace{2pt}\raisebox{1pt}{$\scriptscriptstyle\LCS$}})^{-1}\colon f(E)\longrightarrow G
$$
which is a linear and continuous map. By applying the hypo-completion functor to $\alpha\hspace{1pt}(\bar{f}^{\hspace{2pt}\raisebox{1pt}{$\scriptscriptstyle\LCS$}})^{-1}$, we extend the latter to a map $\beta\colon F\rightarrow G$ and get $\beta f = \beta\hspace{1pt}j\hspace{1pt}\bar{f}^{\hspace{2pt}\raisebox{1pt}{$\scriptscriptstyle\LCS$}}\hspace{1pt}q = \alpha\hspace{1pt}(\bar{f}^{\hspace{2pt}\raisebox{1pt}{$\scriptscriptstyle\LCS$}})^{-1}\hspace{1pt}\bar{f}^{\hspace{2pt}\raisebox{1pt}{$\scriptscriptstyle\LCS$}}\hspace{1pt}q= \alpha\hspace{1pt}q = h$. Uniqueness of $\beta$ follows since $f$ has a dense range.
\end{proof}

Now we get immediately the following description of $\CC_{\operatorname{all},\,\HCOM}$.

\begin{prop}\label{PROP-KK} A sequence $E\xrightarrow{\hspace{2pt}\scriptscriptstyle f\hspace{2pt}}F\xrightarrow{\hspace{2pt}\scriptscriptstyle g\hspace{2pt}}G$ in $\HCOM$ is a kernel-cokernel pair iff $f$ is injective, open onto its range, $f(E)=g^{-1}(0)$ holds as vector spaces and $g$ is open onto its range and it holds $G=\qcomp{g(F)}$.
\end{prop}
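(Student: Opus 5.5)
The plan is to reduce the statement to Proposition \ref{PROP-3} together with the explicit kernel and cokernel formulas of Proposition \ref{PROP-1}. Recall that $(f,g)$ is a kernel-cokernel pair iff $f=\ker g$ and $g=\cok f$. Proposition \ref{PROP-1}(i) identifies $\ker g$ with the inclusion $(g^{-1}(0),\tau_{F|g^{-1}(0)})\rightarrow F$. Proposition \ref{PROP-3}(ii) characterizes cokernels as the maps that are open onto their range and satisfy $G=\qcomp{g(F)}$. The proof is then organized as two implications, each matching one half of the stated list of conditions.

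\emph{Forward direction.} Assume $(f,g)$ is a kernel-cokernel pair. Since $f=\ker g$, it is, up to isomorphism, the inclusion of $g^{-1}(0)$ with the subspace topology. Hence $f$ is injective, open onto its range, and $f(E)=g^{-1}(0)$ as vector spaces. Since $g$ is a cokernel, Proposition \ref{PROP-3}(ii) directly gives that $g$ is open onto its range and $G=\qcomp{g(F)}$.

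\emph{Backward direction.} Proposition \ref{PROP-3}(ii) first shows that $g$ is a cokernel. The task is then to show $f=\ker g$ and $g=\cok f$.

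For $f=\ker g$, observe that $f$ is injective, open onto its range, and $f(E)=g^{-1}(0)$. So $f$ corestricts to a topological isomorphism $E\rightarrow(g^{-1}(0),\tau_{F|g^{-1}(0)})$, since openness onto the range means the induced topology is the quotient, here the subspace, topology. Composing with the kernel inclusion from Proposition \ref{PROP-1}(i) shows that $f$ is a kernel of $g$.

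For $g=\cok f$, I use that $g$ is a cokernel, say $g=\cok h$. In any preabelian category a cokernel is the cokernel of its own kernel, so $g=\cok\ker g=\cok f$. Alternatively, this is visible in the proof of Proposition \ref{PROP-3}(ii), which shows $g=\cok(g^{-1}(0)\rightarrow F)$, and $f$ is isomorphic to that inclusion.

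The only delicate point is the identification of $f$ with the kernel inclusion: one must check that openness onto the range yields precisely the subspace topology on $f(E)=g^{-1}(0)$. This is immediate, since the range is given the topology induced by $F$. Otherwise the argument is bookkeeping.
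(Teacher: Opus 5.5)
Your proposal is correct and follows essentially the paper's argument. The forward direction reads off the conditions from $f=\ker g$ being (up to isomorphism) the subspace inclusion of $g^{-1}(0)$ and from Proposition~\ref{PROP-3}(ii); the backward direction reduces, exactly as in the paper, to showing $f=\ker g$, since a cokernel is the cokernel of its own kernel. The only cosmetic difference is that you identify $f$ directly with the kernel inclusion of Proposition~\ref{PROP-1}(i), whereas the paper first invokes Proposition~\ref{PROP-3}(i).
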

\begin{proof}\textquotedblleft{}$\Longrightarrow$\textquotedblright{} It follows from Proposition \ref{PROP-3} that $f$ is injective, open onto its range and that $f(E)\subseteq F$ is closed. Moreover it follows from the same proposition that $g$ is open onto its range and that $G=\qcomp{g(F)}$ holds. From $f=\ker g$ it follows that $f$ is, up to isomorphism, the inclusion of the subspace $g^{-1}(0)$ into $F$, which means $f(E)=g^{-1}(0)$.

\smallskip

\textquotedblleft{}$\Longleftarrow$\textquotedblright{} The conditions that apply to $f$ resp.~$g$ separately yield that $f$ is a kernel and $g$ is a cokernel. It suffices therefore to show that $f=\ker g$, which follows from $f(E)=g^{-1}(0)$.
\end{proof}

In $\HCOM$ cokernels are in general not surjective and thus kernel-cokernel pairs need not to be algebraically exact: Take $F\subseteq E$ as in Proposition \ref{HYPOPROP}(vi) and consider the sequence 
$$
F\hookrightarrow E\epi\overbracket[0.6pt][1.5pt]{E/\overline{F}\hspace{1pt}}.
$$

\begin{prop}\label{PROP-4} Consider the functors $\inc\colon\HCOM\rightarrow\HDLCS$ and $\hc\colon\HDLCS\rightarrow\HCOM$.\vspace{2pt}
\begin{myitemize}

\item[(i)] The functor $\inc$ is kernel preserving; more precisely: If $f\colon F\rightarrow G$ is a morphism in $\HCOM$ and $g\colon E\rightarrow F$ is its kernel in $\HCOM$, then $g=\ker f$ holds in $\HDLCS$.

\vspace{3pt}

\item[(ii)] The functor $\hc$ is cokernel preserving; more precisely: If $f\colon E\rightarrow F$ is a morphism in $\HDLCS$ and $g\colon F\rightarrow G$ is its cokernel in $\HDLCS$, then $\overbracket[0.6pt][1.5pt]{g\hspace{1pt}}=\cok \overbracket[0.6pt][1.5pt]{f\hspace{1pt}}$ holds in $\HCOM$.

\end{myitemize}
\end{prop}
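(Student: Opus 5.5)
Both parts follow from the adjunction $\hc\dashv\inc$ of Lemma \ref{LEM-1}: a right adjoint preserves all limits and a left adjoint preserves all colimits, as in \cite[Proposition 4.5.15]{Riehl}. I would nevertheless verify each part directly from Proposition \ref{PROP-1}, which also makes the `more precisely' formulations transparent.

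For (i), let $f\colon F\rightarrow G$ be in $\HCOM$. By Proposition \ref{PROP-1}(i), its kernel in $\HCOM$ is, up to isomorphism, the inclusion $(f^{-1}(0),\tau_{F|f^{-1}(0)})\rightarrow F$. This is exactly the kernel of $f$ in $\HDLCS$. Any other kernel $g$ in $\HCOM$ differs from it by an isomorphism in $\HCOM$. Since $\HCOM$ is a full subcategory, that isomorphism is also one in $\HDLCS$, so $g=\ker f$ in $\HDLCS$ as well.

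For (ii), let $f\colon E\rightarrow F$ be in $\HDLCS$ with cokernel $g\colon F\rightarrow G:=F/\overline{f(E)}$ in $\HDLCS$; up to isomorphism $g$ is the quotient map. I check the universal property of $\qcomp{g}\colon\qcomp{F}\rightarrow\qcomp{G}$ against $\qcomp{f}$ directly.

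First, $\qcomp{g}\,\qcomp{f}=\qcomp{gf}=0$ by functoriality of $\hc$ (Proposition \ref{HYPOPROP}(ii)). Next, let $h\colon\qcomp{F}\rightarrow H$ in $\HCOM$ satisfy $h\,\qcomp{f}=0$. Then $h\,i_F f=h\,\qcomp{f}\,i_E=0$, so the cokernel property in $\HDLCS$ gives a unique $k\colon G\rightarrow H$ with $kg=h\,i_F$. By the universal property of the hypo-completion (Proposition \ref{HYPOPROP}(i)) there is a unique $\bar k\colon\qcomp{G}\rightarrow H$ with $\bar k\, i_G=k$. Then
\[
\bar k\,\qcomp{g}\,i_F=\bar k\, i_G\, g=kg=h\, i_F .
\]
The map $i_F$ has dense range and $H$ is Hausdorff, so $\bar k\,\qcomp{g}=h$. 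Equivalently, one can invoke uniqueness in the universal property of $\qcomp{F}$.

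For uniqueness of $\bar k$, note that $\qcomp{g}\,i_F=i_G\,g$ has dense range in $\qcomp{G}$, because $g$ is surjective and $i_G$ has dense range. Hence $\qcomp{g}$ is an epimorphism, and $\bar k$ is unique.

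The only delicate point is keeping track of which universal property forces each equality. The density arguments require Hausdorffness of the target, and this is guaranteed since everything lives in $\HDLCS$. As a shortcut, (ii) is just the statement that the left adjoint $\hc$ preserves the coequalizer of $f$ and $0$, and I would state that as the conceptual reason.
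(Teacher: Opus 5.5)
Your proposal is correct. Its opening sentence is exactly the paper's proof: the paper invokes Lemma \ref{LEM-1} and the facts that right adjoints preserve limits and left adjoints preserve colimits \cite[Thms 4.5.2 and 4.5.3]{Riehl}, and stops there.

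What you add is a hands-on verification, and it is sound. For (i) you are just reading off Proposition \ref{PROP-1}(i), whose proof already observes that the $\HDLCS$-kernel lies in $\HCOM$. For (ii) you unpack the left-adjoint argument in this concrete setting:
\begin{itemize}
\item the universal property of $\qcomp{G}$ produces $\bar k$;
\item density of $i_F$ together with Hausdorffness of $H$ gives $\bar k\,\qcomp{g}=h$;
\item the inclusion $\qcomp{g}(\qcomp{F})\supseteq i_G(G)$ shows that $\qcomp{g}$ has dense range, hence is epic, so $\bar k$ is unique.
\end{itemize}

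The abstract route is a one-liner and gives more: $\inc$ preserves all existing limits and $\hc$ all existing colimits. It also uses neither the explicit formulas nor the surjectivity of $F\rightarrow F/\overline{f(E)}$. Your route instead makes explicit which topological facts do the work (dense ranges, Hausdorff targets, surjectivity of the $\HDLCS$-cokernel). It also sidesteps any comparison of $\qcomp{F/\overline{f(E)}\hspace{1pt}}$ with the formula for $\cok\qcomp{f}$ from Proposition \ref{PROP-1}(ii).

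One cosmetic point: in (i) you do not need fullness of $\HCOM\subseteq\HDLCS$, since any functor preserves isomorphisms.
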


\begin{proof}As $\hc\colon\HDLCS\rightarrow\HCOM$ is a left adjoint and $\inc\colon\HCOM\rightarrow\HDLCS$ is a right adjoint by Lemma \ref{LEM-1}, it follows from \cite[Thm 4.5.3]{Riehl} that $\hc$ preserves cokernels and from \cite[Thm 4.5.2]{Riehl} that $\inc$ preserves kernels.
\end{proof}

\begin{rmk}\label{RMK-2} We note the following non-properties of the functors $\hc$ and $\inc$.\vspace{3pt}

\begin{myitemize}

\item[(i)] The functor $\hc$ does \emph{not} preserve kernels: Consider the restriction map
$$
r\colon\mathbb{P}[0,2]\rightarrow\mathbb{P}[0,1],\;p\mapsto p|_{[0,1]}
$$
between spaces of real-valued polynomial functions on the given intervals, both endowed with the supremum norm over that interval. Then $r$ is  linear, continuous and injective, hence the kernel of $r$ is $0\colon\{0\}\rightarrow\mathbb{P}[0,2]$. As we deal with normed spaces, completion and hypo-completion coincide and we get
$$
\overbracket[0.6pt][1.5pt]{\hspace{1pt}r\hspace{1pt}}\colon\operatorname{C}[0,2]\rightarrow \operatorname{C}[0,1],\;f\mapsto f|_{[0,1]}
$$
which must be the restriction as well, since the latter is an extension of $r$ and $\overbracket[0.6pt][1.5pt]{\hspace{1pt}r\hspace{1pt}}$ is unique. The kernel of $\overbracket[0.6pt][1.5pt]{\hspace{1pt}r\hspace{1pt}}$ is now however non-zero and thus does not coincide with $\hc(0)=0\colon\{0\}\rightarrow\operatorname{C}[0,2]$. Notice that the latter map is nevertheless a kernel, but not of $\overbracket[0.6pt][1.5pt]{\hspace{1pt}r\hspace{1pt}}$.

\vspace{3pt}

\item[(ii)] The functor $\inc$ does not map kernel-cokernel pairs to kernel-cokernel pairs; consider sequence given before Proposition \ref{PROP-4} and observe that $F\rightarrow E\rightarrow E/\overline{F}\in\EE_{\operatorname{top},\,\HDLCS}$.
\end{myitemize}
\end{rmk}

Notice that the following result would also hold true also if we  take for $i\colon E\rightarrow F$ a morphism in $\LCS$ and read $\hc$ as the Hausdorff hypo-completion.

\begin{prop}\label{PROP-5} Let $f\colon E\rightarrow F$ in $\HDLCS$ be injective and open onto its range. Then $\overbracket[0.6pt][1.5pt]{f\hspace{1pt}}\colon \overbracket[0.6pt][1.5pt]{E\hspace{1pt}}\rightarrow \overbracket[0.6pt][1.5pt]{F\hspace{1pt}}$ is injective and open onto its range, too. 
\end{prop}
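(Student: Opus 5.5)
We reduce the statement to a corresponding fact for the ordinary completion. Since $f$ is injective and open onto its range, we may identify $E$ with the linear subspace $f(E)\subseteq F$ carrying the subspace topology. With this identification $f$ becomes the inclusion $E\subseteq F$.

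The key tool is the construction from the proof of Proposition \ref{HYPOPROP}(i),(ii). There $\qcomp{E}$ is realized as a linear subspace of $\widehat{E}$ with the induced topology, and $\qcomp{f}$ is the (co)restriction of $\widehat{f}\colon\widehat{E}\rightarrow\widehat{F}$. Hence it suffices to prove that $\widehat{f}$ is injective and open onto its range. Restricting to the subspace $\qcomp{E}\subseteq\widehat{E}$ and corestricting to $\qcomp{F}\subseteq\widehat{F}$, both with induced topologies, preserves injectivity and openness onto the range. This is because a topological embedding stays a topological embedding under restriction to a topological subspace, and under corestriction to any subspace containing the image.

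For the completion we use the classical fact that a topological embedding $E\hookrightarrow F$ of lcHs extends to a topological embedding $\widehat{E}\hookrightarrow\widehat{F}$ whose image is the closure of $E$ in $\widehat{F}$. To see this, note that the closure $\overline{E}^{\widehat{F}}$ is complete, being closed in a complete space. It contains $E$ densely with the induced topology. By uniqueness of the completion, $\overline{E}^{\widehat{F}}\cong\widehat{E}$ via a map extending the identity on $E$. By uniqueness of continuous extensions this map coincides with $\widehat{f}$. Alternatively, we can argue with seminorms: each continuous seminorm on $E$ extends to a continuous seminorm on $F$ because $E$ carries the subspace topology. Then uniform continuity of $\widehat{f}^{-1}$ on the dense set $f(E)$ extends to its closure.

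The main obstacle is making sure that the realization of $\qcomp{E}$ and $\qcomp{F}$ inside the completions is compatible with this identification. Concretely, we must check that $\qcomp{f}$, defined via Proposition \ref{HYPOPROP}(ii), really is the restriction of $\widehat{f}$ and carries the induced topologies on both sides. This is asserted in the proof of Proposition \ref{HYPOPROP}. Given it, openness onto the range of $\qcomp{f}$ follows at once, since $\widehat{f}$ is a homeomorphism onto its image and $\qcomp{f}$ is its restriction to the topological subspace $\qcomp{E}$.

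We do not need to determine the exact image $\qcomp{f}(\qcomp{E})$ inside $\qcomp{F}$; only injectivity and openness are claimed.
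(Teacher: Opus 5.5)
Your proposal is correct and follows essentially the same route as the paper: show that $\widehat{f}\colon\widehat{E}\rightarrow\widehat{F}$ is injective and open onto its range, then pass to its restriction $\qcomp{f}$ on $\qcomp{E}\subseteq\widehat{E}$, with the induced topologies. The only difference is that the paper cites \cite[Prop 4.1.12]{Prosmans} (strict monomorphisms in $\COM$) for the completion step, whereas you prove it directly by identifying $\widehat{E}$ with the closure of $E$ in $\widehat{F}$.
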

\begin{proof} Employing \cite[Prop 4.1.12]{Prosmans} the map $\widehat{f}\colon\widehat{E}\rightarrow\widehat{F}$ is a strict monomorphism in $\COM$; in particular injective and open onto its range. As $\overbracket[0.6pt][1.5pt]{f\hspace{1pt}}$ is the restriction of $\widehat{f}$ the conclusion follows either by directly checking that $\overbracket[0.6pt][1.5pt]{f\hspace{1pt}}$ is open onto its range or by using that the restriction of the inverse of $\widehat{f}\colon \widehat{E}\rightarrow\ran\widehat{f}$ to $\ran\overbracket[0.6pt][1.5pt]{f\hspace{1pt}}$ is continuous and yields an inverse for $\overbracket[0.6pt][1.5pt]{f\hspace{1pt}}\colon\overbracket[0.6pt][1.5pt]{E\hspace{1pt}}\rightarrow\ran\overbracket[0.6pt][1.5pt]{f\hspace{1pt}}$.
\end{proof}

The proofs of the following two results are very similar to \cite[Prop 4.6 and Lem 4.11]{Wegner25}, we include them for the convenience of the reader.

\begin{prop}\label{PROP-6} Let $g\colon F\rightarrow G$ be a cokernel in $\HCOM$. Then $g$ is a semistable cokernel in $\HCOM$ iff $g$ is surjective.
\end{prop}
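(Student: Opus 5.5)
Both implications go through the explicit description of pullbacks in $\HCOM$ and through Proposition \ref{PROP-3}(ii). Pullbacks exist in $\HCOM$ since it is preabelian. Concretely, the pullback of $g\colon F\rightarrow G$ along $t\colon T\rightarrow G$ is the kernel of $[\hspace{1pt}g\;\,-t\hspace{1pt}]\colon F\oplus T\rightarrow G$. By Proposition \ref{PROP-1}(i) this is the closed subspace $P=\{(x,y)\in F\oplus T\:|\:g(x)=t(y)\}$ with the subspace topology, together with the projection $p\colon P\rightarrow T$. This is exactly the pullback in $\HDLCS$.

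\textquotedblleft{}$\Longleftarrow$\textquotedblright{} Let $g$ be a surjective cokernel. By Proposition \ref{PROP-3}(ii) it is open onto its range, hence a surjective open map, i.e.\ a quotient map in $\HDLCS$. The projection $p$ is then surjective: for $y\in T$, choose $x$ with $g(x)=t(y)$. It is also open, by the standard argument for pullbacks of open surjections. Take a basic neighbourhood $(U\times V)\cap P$ and a point $y\in V$. Use openness of $g$ and continuity of $t$ to find a neighbourhood $W\subseteq V$ of $y$ with $t(W)\subseteq g(U)$. This is done after translating to $0$ by linearity, so that for $y'\in W$ some $x'\in U$ has $g(x')=t(y')$. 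Thus $p$ is surjective and open onto its range, so trivially $T=\qcomp{p(P)}$, and Proposition \ref{PROP-3}(ii) shows $p$ is a cokernel in $\HCOM$. Hence $g$ is semistable.

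\textquotedblleft{}$\Longrightarrow$\textquotedblright{} Suppose $g$ is semistable but not surjective, and pick $x_0\in G\backslash g(F)$. Pull back along $t\colon\KK\rightarrow G$, $\lambda\mapsto\lambda x_0$. Since $\KK x_0\cap g(F)=0$, the pullback is $P=g^{-1}(0)\oplus 0$. The map $p\colon P\rightarrow\KK$ is therefore zero, and the zero map $P\rightarrow\KK$ is not a cokernel, since its cokernel-of-kernel would be $\KK\rightarrow 0$ rather than it; equivalently, $p(P)=0$ is not dense in $\KK$, contradicting Proposition \ref{PROP-3}(ii). So semistability forces surjectivity.

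The main obstacle is the openness of $p$ in the first implication. It needs care with the subspace topology on $P$ and the neighbourhood bookkeeping. Alternatively, one could cite the corresponding fact in $\HDLCS$, where surjective open maps are the semistable cokernels of the quasiabelian structure, since the pullback in $\HCOM$ coincides with the one in $\HDLCS$.
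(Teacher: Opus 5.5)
Your proof is correct. Its skeleton matches the paper's: you identify the $\HCOM$-pullback with the $\HDLCS$-pullback via Proposition \ref{PROP-1}(i), and you recognise cokernels through Proposition \ref{PROP-3}(ii). The differences lie in what you prove by hand.

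For ``$\Longleftarrow$'', the paper notes via Prosmans that a surjective map open onto its range is an $\HDLCS$-cokernel. It then uses that $\HDLCS$ is quasiabelian to conclude that the pulled-back projection is an $\HDLCS$-cokernel, hence surjective and open. You instead check surjectivity and openness of the projection directly with the $0$-neighbourhood argument, and you mention the paper's route as an alternative.

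For ``$\Longrightarrow$'', the paper only cites \cite[Prop 2.2.3]{SiegThesis}. You give a self-contained argument: pulling back along $\lambda\mapsto\lambda x_0$ with $x_0\notin g(F)$ yields the zero map $g^{-1}(0)\rightarrow\KK$, which is not even an epimorphism. Your version thus avoids both external citations at the cost of a few lines of elementary topology.

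One small slip: the kernel of the zero map $p\colon P\rightarrow\KK$ is $\id_P$, so its ``cokernel of the kernel'' is $P\rightarrow 0$, not $\KK\rightarrow 0$. The justification you give right after is the correct one: $p(P)=0$, so $\qcomp{p(P)}=0\neq\KK$, contradicting Proposition \ref{PROP-3}(ii). Equivalently, $0\colon P\rightarrow\KK$ is not epic.
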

\begin{proof}\textquotedblleft{}$\Longrightarrow$\textquotedblright{} Follows from \cite[Prop 2.2.3]{SiegThesis}.

\vspace{3pt}

\textquotedblleft{}$\Longleftarrow$\textquotedblright{} A $\HCOM$-cokernel $g$ is open onto its range by Proposition \ref{PROP-3}(ii). By assumption, $g$ is surjective. It is thus a cokernel in $\HDLCS$ by \cite[Cor 2.1.9 and 3.1.5(ii)]{Prosmans}. Let $t\colon T\rightarrow G$ be a morphism in $\HCOM$. In view of Proposition \ref{PROP-1}(i) the $\HCOM$-pullback $(P,p_T,p_F)$ of $g$ along $t$ coincides with the $\HDLCS$-pullback. As $\HDLCS$ is quasiabelian, $p_T\colon P\rightarrow T$ is an $\HDLCS$-cokernel and thus in particular surjective, thus by Proposition \ref{PROP-3}(ii) a $\HCOM$-cokernel.
\end{proof}

\begin{lem}\label{LEM-2} We consider everything inside the category $\HCOM$. Let $E\xrightarrow{\hspace{2pt}\scriptscriptstyle f\hspace{2pt}}F\xrightarrow{\hspace{2pt}\scriptscriptstyle g\hspace{2pt}}G$ be a kernel-cokernel pair such that $f$ is a semistable kernel and $g$ is surjective. Let $t\colon E\rightarrow T$ be an arbitrary morphism. Then $S:=(F\oplus T)/\overline{\ran[f\:\sm{}\hspace{-2pt}t]^{\T}}$ is hypo-complete.
\end{lem}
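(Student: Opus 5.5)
The plan is to compare the $\HCOM$-pushout of $f$ along $t$ with the space $S$. Since $\HCOM$ is preabelian (Proposition \ref{PROP-1}), the pushout of $f$ along $t$ is
\[
Q=\cok\bigl([f\;\,{-t}]^{\T}\colon E\rightarrow F\oplus T\bigr)=\qcomp{S}
\]
by Proposition \ref{PROP-1}(ii). The natural map is $j\colon S\rightarrow Q$, and we must show that $j$ is surjective, so that $S=\qcomp{S}$. Call the pushout legs $q_T\colon T\rightarrow Q$ and $q_F\colon F\rightarrow Q$. Because $f$ is a semistable kernel, $q_T$ is a kernel in $\HCOM$. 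By Proposition \ref{PROP-3}(i), $q_T$ is therefore injective, open onto its range, and has closed range.

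Next I would use $g$ to obtain a map out of $Q$. The morphism $[g\;\,0]\colon F\oplus T\rightarrow G$ vanishes on $\ran[f\;\,{-t}]^{\T}$ since $gf=0$. It therefore induces $\bar g\colon Q\rightarrow G$ with $\bar g q_F=g$ and $\bar g q_T=0$. Consider the sequence
\[
T\xrightarrow{q_T}Q\xrightarrow{\bar g}G.
\]
I claim $q_T=\ker\bar g$; this is the standard fact that pushouts of kernel-cokernel pairs along maps preserve the pair in a preabelian category when the pushed-out leg is a kernel. I would prove it by hand using the universal properties, exactly as in \cite[Prop 4.6]{Wegner25}. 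It follows that $q_T(T)=\bar g^{-1}(0)$.

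The key step is surjectivity of $j$. Fix $y\in Q$. Since $g$ is surjective, there is $x\in F$ with $g(x)=\bar g(y)$. Then $y-q_F(x)\in\bar g^{-1}(0)=q_T(T)$, so $y=q_F(x)+q_T(s)$ for some $s\in T$. This lies in the image of $F\oplus T\rightarrow S\rightarrow Q$, hence in $j(S)$. Since $j\colon S\rightarrow\qcomp{S}$ is injective and open onto its range by Proposition \ref{HYPOPROP}(i), it is a topological isomorphism onto $Q$, and $S$ is hypo-complete.

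The main obstacle is the identification $\ker\bar g=q_T$. This needs the semistability of $f$ genuinely: $q_T$ being a kernel, of some morphism, must be upgraded to being the kernel of $\bar g$. I would try the following. Let $k=\ker\bar g$, realized as the closed subspace $\bar g^{-1}(0)$ with induced topology. Then $q_T$ factors as $q_T=kv$. Show that $v$ is an epimorphism via the pushout property: any two maps out of $\ker\bar g$ that agree on $q_T(T)$ can be compared using the density of $j(S)$ in $Q$, since $j(S)$ is dense by Proposition \ref{HYPOPROP}(i). Combined with $v$ being a kernel, because $q_T$ is one and $k$ is monic, this gives that $v$ is an isomorphism. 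If this becomes messy, I would instead check algebraically that $\bar g^{-1}(0)\cap j(S)=q_T(T)$ using $f=\ker g$, and then conclude $\bar g^{-1}(0)=q_T(T)$ from the closedness of $q_T(T)$ together with density of $j(S)$ in $Q$, intersected appropriately.
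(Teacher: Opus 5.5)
Your argument is correct and is essentially the paper's proof. The paper also gets that $is_T$ (your $q_T$) is an $\HCOM$-kernel from semistability. It gets that the induced map $\qcomp{c}$ (your $\bar g$) is $\cok(is_T)$ from \cite[Thm 10]{RW}, so that $(is_T,\qcomp{c})$ is an $\HCOM$-kernel-cokernel pair, and it finishes with the five lemma on underlying vector spaces. Your chase $Q=q_F(F)+q_T(T)=j(S)$ is exactly the surjectivity half of that five lemma. Working directly with $\bar g$ and the surjectivity of $g$ also lets you skip the paper's preliminary pushout in $\HDLCS$; injectivity of $j$ is automatic anyway.

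What you should revise is your final paragraph: $q_T=\ker\bar g$ is not an obstacle and needs no topology.
\begin{itemize}
\item If $wq_T=0$, then $wq_Ff=wq_Tt=0$, so $wq_F=ug$ for a unique $u$, because $g=\cok f$.
\item The uniqueness part of the pushout property then gives $w=u\bar g$.
\item Since $\bar gq_F=g$ is epic, $\bar g$ is epic, hence $\bar g=\cok q_T$.
\item A kernel is the kernel of its own cokernel, so $q_T=\ker\bar g$.
\end{itemize}
Semistability is used only to know that $q_T$ is a kernel at all.

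The alternatives you sketch should be discarded.
\begin{itemize}
\item Your fallback is invalid. A dense subspace can meet a closed subspace in a subspace that is closed but not dense in it: the kernel of a discontinuous functional $\varphi$ on a Banach space meets $\KK x$, with $\varphi(x)\neq0$, only in $0$. So $\bar g^{-1}(0)\cap j(S)=q_T(T)$, closedness of $q_T(T)$ and density of $j(S)$ in $Q$ do not give $\bar g^{-1}(0)=q_T(T)$.
\item In fact, $Q=j(S)+\bar g^{-1}(0)$ holds by surjectivity of $g$ alone. The inclusion $\bar g^{-1}(0)\subseteq j(S)$ is therefore equivalent to the lemma itself and cannot come from density considerations.
\item Likewise, proving that $v$ is epic amounts to proving that $q_T(T)$ is dense in $\bar g^{-1}(0)$. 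Density of $j(S)$ in $Q$ does not provide that.
\end{itemize}
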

\begin{proof}\textcircled{1} We form the pushout of $f$ along $t$ in $\HDLCS$. This yields precisely the space $S$ with the natural maps and thus the solid part of the following diagram:
\begin{equation*}
\begin{tikzcd}[column sep =32pt, row sep =28pt]
E\arrow{r}{f}\arrow{d}[swap]{t} \commutes[\text{$\HDLCS$-PO}]{dr}& F \arrow{d}{s_F}\arrow{r}{g} &G\arrow{d}{\id_G}\\
 T \arrow{r}[swap]{s_T} & S\arrow[dashed]{r}[swap]{c}& G.
\end{tikzcd}
\end{equation*}
By Proposition \ref{PROP-3}(i) and \cite[Cor 3.1.5 and 2.1.9]{Prosmans} a morphism in $\HCOM$ is a $\HCOM$-kernel iff it is a $\HDLCS$-kernel. Using this and that $\HDLCS$ is quasiabelian, we conclude that $s_T$ is a kernel in $\HDLCS$. By the pushout property indicated in the diagram there is a unique map $c\colon S\rightarrow G$ such that $g=cs_F$ and $cs_T=0$. By \cite[Thm 10]{RW} this map $c$ is the cokernel of $s_T$ in the category in which we took the pushout, i.e., in $\HDLCS$. From the diagram we see that $c$ is surjective as $g$ was surjective by assumption. Moreover, we get that $(s_T,c)$ is a kernel-cokernel pair in $\HDLCS$.

\smallskip

\textcircled{2} We extend the diagram from part \textcircled{1} by using the embedding $i\colon S\rightarrow \overbracket[0.6pt][1.5pt]{S\hspace{1pt}}$ and get the solid part of the following diagram in which the left rectangle is a pushout square in $\HCOM$:
\begin{equation*}
\begin{tikzcd}[column sep =32pt, row sep =28pt]
E\arrow{r}{f}\arrow{d}[swap]{t} & F \arrow{d}{s_F}\arrow{r}{g} &G\arrow{d}{\id_G}\\
T\arrow{d}[swap]{\id_T} \arrow{r}[swap]{s_T} & S\arrow{r}[swap]{c}\arrow{d}{i}& G\arrow{d}{\id_G}\\
T\arrow{r}[swap]{is_T}& \overbracket[0.6pt][1.5pt]{S\hspace{1pt}}\arrow[dashed]{r}[swap]{\overbracket[0.6pt][1.5pt]{\scriptstyle\hspace{0.5pt}c\hspace{0.75pt}}}&G.
\end{tikzcd}
\end{equation*}
Now we apply the hypo-completion functor to obtain $\overbracket[0.6pt][1.5pt]{\hspace{0.5pt}c\hspace{1pt}}$ which makes the lower right square commutative and must be surjective as $c$ was already surjective by \textcircled{1}. By commutativity we obtain
$$
\overbracket[0.6pt][1.5pt]{\hspace{0.5pt}c\hspace{1pt}}\hspace{2.5pt} (is_T)=0 \;\text{ and } \; \overbracket[0.6pt][1.5pt]{\hspace{0.5pt}c\hspace{1pt}}\hspace{2.5pt}(is_F)=g,
$$
which means that $\overbracket[0.6pt][1.5pt]{\hspace{0.5pt}c\hspace{1pt}}$ is precisely the map that comes from the pushout property of the left rectangle in $\HCOM$.  Invoking \cite[Thm 10]{RW} a second time, but now applying it in $\HCOM$, we get that $\overbracket[0.6pt][1.5pt]{\hspace{0.5pt}c\hspace{1pt}}$ is a $\HCOM$-cokernel of $is_T$. By assumption, $f$ is stable under pushouts in $\HCOM$ which implies that $is_T$ is a $\HCOM$-kernel and thus $(is_T,\overbracket[0.6pt][1.5pt]{\hspace{0.5pt}c\hspace{1pt}}\hspace{1pt})$ is a $\HCOM$-kernel-cokernel pair.

\smallskip 

\textcircled{3} Let us now for a moment forget all topologies in the diagram. Then \textcircled{1} and \textcircled{2} imply that the second and third row, that is the pairs $(s_T,c)$ and $(is_T,\overbracket[0.6pt][1.5pt]{\hspace{0.5pt}c\hspace{1pt}}\hspace{1pt})$, are both short exact sequences of vector spaces. Using the 5-Lemma we get that $i$ is an isomorphism of vector spaces. As $i$ was the inclusion of a subspace, this means that $S=\overbracket[0.6pt][1.5pt]{S\hspace{1pt}}$ holds as claimed.
\end{proof}

\begin{prop}\label{PROP-7} We consider everything inside the category $\HCOM$. Let $E\xrightarrow{\hspace{2pt}\scriptscriptstyle f\hspace{2pt}}F\xrightarrow{\hspace{2pt}\scriptscriptstyle g\hspace{2pt}}G$ be a kernel-cokernel pair and assume that $g$ is surjective. Then the following are equivalent:\vspace{2pt}

\begin{compactitem}

\item[(i)] $f$ is a semistable kernel,\vspace{3pt}

\item[(ii)] For every $t\colon E\rightarrow T$ the quotient $S:=(F\oplus T)/\overline{\ran[f\:\sm{}\hspace{-2pt}t]^{\T}}$ is hypo-complete.

\end{compactitem}
\end{prop}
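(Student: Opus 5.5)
The direction (i)\,$\Rightarrow$\,(ii) is exactly Lemma \ref{LEM-2}, so nothing further needs to be done there. For (ii)\,$\Rightarrow$\,(i) I fix $t\colon E\rightarrow T$ in $\HCOM$ and compute the pushout of $f$ along $t$. The plan is to show that this pushout is given by the $\HDLCS$-pushout, and then that the resulting map out of $T$ is an $\HCOM$-kernel.

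First, form the pushout in $\HDLCS$. This is $S=(F\oplus T)/\overline{\ran[f\;\sm t]^{\T}}$ with the induced maps $s_F, s_T$, exactly as in part \textcircled{1} of the proof of Lemma \ref{LEM-2}. By (ii), $S$ is hypo-complete. Since $\HCOM\subseteq\HDLCS$ is full, a cocone in $\HCOM$ is a cocone in $\HDLCS$, and $S\in\HCOM$. Hence $(S,s_F,s_T)$ satisfies the universal property in $\HCOM$ as well, so the $\HCOM$-pushout exists and equals the $\HDLCS$-pushout. (Alternatively, the $\HCOM$-pushout is the hypo-completion of the $\HDLCS$ one because $\hc$ is a left adjoint by Lemma \ref{LEM-1}, and $S=\qcomp{S}$.)

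Second, I check that $s_T$ is a kernel in $\HCOM$. By Proposition \ref{PROP-3}(i) together with \cite[Cor 3.1.5 and 2.1.9]{Prosmans}, as used in Lemma \ref{LEM-2}, $f$ is an $\HDLCS$-kernel, i.e.\ injective, open onto its range and with closed range. Since $\HDLCS$ is quasiabelian, kernels there are semistable, so $s_T$ is an $\HDLCS$-kernel: injective, open onto its range, and with closed range in $S$. By Proposition \ref{PROP-3}(i) these three properties characterize $\HCOM$-kernels, so $s_T$ is a kernel in $\HCOM$. As $t$ was arbitrary, $f$ is semistable.

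The main point of care is the first step, namely identifying the $\HCOM$-pushout with the $\HDLCS$-pushout. Here hypo-completeness of $S$ from (ii) is exactly what is needed, and the fullness argument settles it. The surjectivity hypothesis on $g$ is not needed in this direction; it only enters through Lemma \ref{LEM-2}.
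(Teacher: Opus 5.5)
Your proposal is correct and follows essentially the same route as the paper. For (i)$\Rightarrow$(ii) you cite Lemma~\ref{LEM-2}. For (ii)$\Rightarrow$(i) you use hypo-completeness of $S$ to identify the $\HCOM$-pushout with the $\HDLCS$-pushout, use that $\HDLCS$ is quasiabelian to see that $s_T$ is an $\HDLCS$-kernel, and transfer this back to $\HCOM$ via Proposition~\ref{PROP-3}(i). The only cosmetic difference is how you see that $f$ is an $\HDLCS$-kernel: the paper uses Proposition~\ref{PROP-4}(i), while you use Proposition~\ref{PROP-3}(i) together with Prosmans, which is the same argument as in the first step of the proof of Lemma~\ref{LEM-2}.
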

\begin{proof}(i)\hspace{1pt}$\Longrightarrow$\hspace{1pt}(ii): Lemma \ref{LEM-2}.

\smallskip

(ii)\hspace{1pt}$\Longrightarrow$\hspace{1pt}(i): If $t\colon E\rightarrow T$ in $\HCOM$ is given, then by (ii) the pushout of $f$ along $t$ in $\HCOM$ coincides with the pushout in $\HDLCS$. From Proposition \ref{PROP-4}(i) we get that $f$ is a kernel in $\HDLCS$. As the latter category is quasiabelian, $s_T\colon T\rightarrow S$ is a kernel in $\HDLCS$ and belongs to $\HCOM$. As noted in part \textcircled{1} of the proof of Lemma \ref{LEM-2} it follows that $s_T$ is then a kernel in $\HCOM$ as well.
\end{proof}

Below we use the notation of Section \ref{SEC:PRELIM} and what we recalled at the end of Section \ref{SEC:CLEX} on topologically exact sequences. Notice that while $\COM\subseteq(\HDLCS,\EE_{\operatorname{top},\,\HDLCS})$ is extension closed, and thus exact in a natural way, this is not the case for other examples arising naturally in functional analysis. In particular, neither the quasi-complete, nor the sequentially complete, nor the locally complete spaces form an extension-closed subcategory of $(\HDLCS,\EE_{\operatorname{top},\,\HDLCS})$, see \cite[Table on p.~2114]{DS12}. The following result shows that it is nevertheless possible to do homological algebra with respect to the short topologically exact sequences in each of these three categories.

\begin{thm}\label{HCOM-DEX} The category $(\HCOM,\DD_{\operatorname{max},\,\HCOM})$ is strongly deflation-exact and the conflation structure $\DD_{\mathsf{max},\,\HCOM}=\EE_{\operatorname{top},\,\HDLCS}\cap\HCOM$ consists precisely of topologically exact sequences of locally convex spaces in which all three terms are hypo-complete. The embedding $(\HCOM,\DD_{\operatorname{max},\,\HCOM})\rightarrow\mathcal{LH}(\HCOM,\DD_{\operatorname{max},\,\HCOM})$ lifts to a bounded derived equivalence and the left heart is given by $\hMork(\HCOM)[\mathcal{S}_{\DD_{\operatorname{max},\,\HCOM}}^{-1}]$ as described in Section \ref{SEC-LRH}.
\end{thm}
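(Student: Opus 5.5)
First, $\HCOM$ is preabelian (Proposition \ref{PROP-1}) and hence karoubian. Theorem \ref{THMDEFMAX}(ii) then says that $\DD_{\operatorname{max},\,\HCOM}$ is strongly deflation-exact and is the maximal deflation-exact structure. So the real content is the identification $\DD_{\operatorname{max},\,\HCOM}=\EE_{\operatorname{top},\,\HDLCS}\cap\HCOM$. Once that is established, the remaining claims follow from results already stated. Theorem \ref{THM-LH-PHI}(ii) gives the bounded derived equivalence with the left heart, and Theorem \ref{NEW-LH} gives the description $\hMork(\HCOM)[\mathcal{S}^{-1}]$, with kernels and cokernels computed via Theorem \ref{KER-COK-PROP}. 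I will also mention the size caveat of Remark \ref{SIZE}.

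For the inclusion $\DD_{\operatorname{max},\,\HCOM}\subseteq\EE_{\operatorname{top},\,\HDLCS}\cap\HCOM$, take a kernel-cokernel pair $(f,g)$ in $\HCOM$ with $g$ a semistable cokernel. Proposition \ref{PROP-6} shows that $g$ is surjective, and Proposition \ref{PROP-3}(ii) shows that $g$ is open onto its range, so $g$ is a topological quotient map. Proposition \ref{PROP-KK} shows that $f$ is injective, open onto its range, and satisfies $f(E)=g^{-1}(0)$. Hence the sequence is algebraically exact with both maps topological homomorphisms, i.e.\ it is topologically exact.

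For the converse, let $E\xrightarrow{f}F\xrightarrow{g}G$ be topologically exact with $E,F,G$ hypo-complete. By Proposition \ref{PROP-KK} it is a kernel-cokernel pair in $\HCOM$: $f$ is injective and open onto its range with $f(E)=g^{-1}(0)$, and $g$ is open with $g(F)=G$, so $G=\qcomp{g(F)}$ trivially. It remains to show that $g$ is a semistable cokernel, and since $g$ is surjective this is exactly Proposition \ref{PROP-6}. I expect this step to be the main obstacle only in appearance. The point is that semistability of the cokernel, unlike semistability of the kernel, costs nothing here: pullbacks in $\HCOM$ agree with those in $\HDLCS$, because $\HCOM$ is closed under closed subspaces and finite products. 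The failure of extension-closedness reported in \cite{DS12} concerns the middle term of an extension, which never needs to be formed when checking pullback stability, since the pullback $P$ of a hypo-complete $g$ along $t\colon T\to G$ is a closed subspace of $F\oplus T$. I will remark that this is why the deflation side works even though the inflations need not be semistable kernels, so in general $\DD_{\operatorname{max},\,\HCOM}\neq\EE_{\operatorname{max},\,\HCOM}$ (compare Proposition \ref{PROP-7}).

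Finally, the derived statements are applied verbatim. $\HCOM$ has kernels by Proposition \ref{PROP-1}(i), so Theorem \ref{THM-LH-PHI} applies to the strongly deflation-exact category with kernels $(\HCOM,\DD_{\operatorname{max},\,\HCOM})$, yielding an abelian left heart and the lifted equivalence for $\ast=\bb$. Theorem \ref{NEW-LH} yields $\mathcal{LH}\cong\hMork(\HCOM)[\mathcal{S}_{\DD_{\operatorname{max},\,\HCOM}}^{-1}]$; there is no need for admissible kernels. Where local smallness of $\Der^{\bb}$ is not otherwise known, I would invoke the universe restriction of Remark \ref{SIZE} and check that the hypo-completion and the (co)kernel formulas are unaffected by it.
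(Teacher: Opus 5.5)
Your proof is correct and follows the paper's own argument: Theorem \ref{THMDEFMAX}(ii) gives strong deflation-exactness, Propositions \ref{PROP-3}, \ref{PROP-6} and \ref{PROP-KK} give both inclusions of $\DD_{\operatorname{max},\,\HCOM}=\EE_{\operatorname{top},\,\HDLCS}\cap\HCOM$, and Theorems \ref{THM-LH-PHI} and \ref{NEW-LH} give the heart statements. Drop your side remark that the inflations need not be semistable kernels, i.e.\ that $\DD_{\operatorname{max},\,\HCOM}\neq\EE_{\operatorname{max},\,\HCOM}$ in general: you give no example, the paper explicitly states it does not know whether this inclusion is proper, and the theorem does not need the remark.
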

\begin{proof} By Theorem \ref{THMDEFMAX} we have that $\DD_{\mathsf{max},\,\HCOM}$ is strongly deflation-exact and consists of kernel-cokernel pairs $(f,g)$ in which $g$ is a semistable cokernel\,---\,and thus by Propositions \ref{PROP-3} and \ref{PROP-6} an open surjection. Invoking Proposition \ref{PROP-KK} yields $(f,g)\in\EE_{\operatorname{top},\,\HDLCS}$. If conversely $(f,g)\in\EE_{\operatorname{top},\,\HDLCS}\cap\HCOM$ then $g$ is an open surjection and thus by Propositions \ref{PROP-3} and \ref{PROP-6} a $\HCOM$-semistable cokernel. The statements about the left heart follow from Theorems \ref{THM-LH-PHI} and \ref{NEW-LH}.
\end{proof}

\begin{rmk} We add the following comments.\vspace{2pt}

\begin{myitemize}

\item[(i)] The functor $\inc\colon(\HCOM,\DD_{\operatorname{max},\,\HCOM})\rightarrow(\HDLCS,\EE_{\operatorname{top},\,\HDLCS})$ is exact.

\vspace{3pt}

\item[(ii)] Given two hypo-complete spaces $E\subseteq F$ with continuous inclusion $i\colon E\rightarrow F$ such that $E$ is not $\tau_F$-closed, their quotient in the left heart is the object $0\hookrightarrow E\xrightarrow{\hspace{1pt}\scriptscriptstyle i\hspace{2pt}}F$. A morphism represented by a commutative square between such quotients is an isomorphism iff the corresponding diagram is a $\DD_{\operatorname{max},\,\HCOM}$-pulation, cf.~Remark \ref{MS-RMK}(iv). 

\vspace{3pt}

\item[(iii)]  The maximal inflation-exact structure is strongly inflation-exact and given by\vspace{-4pt}
\begin{align*}
\II_{\mathsf{max},\,\HCOM}&=\bigl\{E\xrightarrow{\hspace{2pt}f\hspace{2pt}}F\xrightarrow{\hspace{2pt}g\hspace{2pt}}G\in\HCOM\:\big|\:f \text{ is injective, open onto its range, } f(E)=g^{-1}(0)\\[-3pt]
& \hspace{22pt}\text{holds as vector spaces, } g \text{ is open onto its range, we have }G=\qcomp{g(F)}\text{ and} \\[-5pt]
& \hspace{22pt}\forall\:t\colon E\rightarrow T\in\HCOM\colon T\rightarrow\qcomp{(F\oplus T)/\overline{\ran[f\,\sm\hspace{-2pt}t]^{\T}}}\text{ has a closed range}\hspace{0.25pt}\bigr\}.\vspace{-5pt}
\end{align*}
Notice that in the last line above we know that the range of the canonical map is closed in $(F\oplus T)/\overline{\ran[f\,\sm\hspace{-2pt}t]^{\T}}$, since $f$ is an $\HDLCS$-kernel and thus $\HDLCS$-semistable. There seems to be no `operational' criterion for this closedness to be preserved under inclusion into the hypo-completion. The maximal exact structure is given by\vspace{-4pt}
\begin{align*}
\EE_{\mathsf{max},\,\HCOM}&=\bigl\{E\xrightarrow{\hspace{2pt}f\hspace{2pt}}F\xrightarrow{\hspace{2pt}g\hspace{2pt}}G\in\HCOM\:\big|\:(f,g) \text{ is is topologically exact and}\\[-3pt]
& \hspace{22pt}\forall\:t\colon E\rightarrow T\in\HCOM\colon (F\oplus T)/\overline{\ran[f\,\sm\hspace{-2pt}t]^{\T}}\text{ is hypo-complete}\hspace{0.25pt}\bigr\}.\vspace{-5pt}
\end{align*}

\vspace{3pt}

\item[(iv)] Using $\II_{\operatorname{max},\,\HCOM}$ we get an embedding into the right heart as in Section \ref{SEC-LRH}, using $\EE_{\operatorname{max},\,\HCOM}$ we can embed into the left or the right heart.

\vspace{3pt}

\item[(v)] Both of the conflation categories $(\HCOM,\DD_{\operatorname{max},\,\HCOM})$ and $(\HCOM,\EE_{\operatorname{max},\,\HCOM})$ do not have admissible (co-)kernels. Accordingly neither Theorem \ref{THM-HEART-2} nor Remark \ref{INF-RMK}(vi) are applicable. Whether $(\HCOM,\II_{\operatorname{max},\,\HCOM})$ does have admissible cokernels is unknown.

\vspace{3pt}

\item[(vi)] Besides the two trivial inclusions between the three conflation structures there is nothing else known to be true. In particular we do not know if any of the two inclusions is proper.
\end{myitemize}
\end{rmk}

To conclude this section we mention that the category of locally complete LB-spaces has been investigated in \cite{Wegner25} and that here, the maximal (two-sided) exact structure is strictly larger than the class of topologically exact sequences. Note that in the subcategory kernels compute differently than in $\HDLCS$.





\section{Fr\'echet Spaces of Type $(\operatorname{DN})$}\label{SEC:DN} 





While our abstract results in Section \ref{SEC-LRH} require \emph{either} the existence of all kernels \emph{or} the existence of all cokernels, our examples have so far all been preabelian. Below we provide an example in which all kernels exist but in which not every morphism has a cokernel.

\smallskip

Let $\FreDN\subseteq\Fre$ be the full subcategory of Fr\'echet spaces satisfying the property $(\operatorname{DN})$, see \cite[p.~359]{MV}. It is straightforward to check that $\FreDN$ is additive. By \cite[Lem 29.2(2)]{MV} $\FreDN$ has kernels and they are calculated as in $\Fre$. However, $\FreDN$ is not preabelian:

\smallskip

\begin{ex}(joint with J.~Wengenroth) Let $E$ be any Fr\'echet space with $(\operatorname{DN})$ which is not a Banach space, e.g., the space of rapidly decreasing sequences $E=\operatorname{s}$, cf.~\cite[Ex 29.4(1) and Lem 29.2(3)]{MV}. By Eidelheit's theorem, see \cite[Cor 26.28]{MV}, there exists a closed subspace $F\subseteq E$ such that $E/F\cong\omega$, i.e., the Fr\'echet space of all sequences. Let $f\colon F\rightarrow E$ be the inclusion map. By \cite[Lem 29.2(2)]{MV} the space $F$ has $(\operatorname{DN})$. Assume $g\colon E\rightarrow G$ is a cokernel of $f$ in the category $\FreDN$. Let $q\colon E\rightarrow\omega$ be the quotient map followed by the isomorphism $E/F\cong\omega$. Then $q=\cok f$ holds in the category $\Fre$ and we get a linear and continuous map $h\colon\omega\rightarrow G$ such that $hq=g$. As $G$ has $(\operatorname{DN})$, there exists a continuous norm $\|\cdot\|$ on $G$ and thus an $N\in\NN$ and a constant $C>0$ such that
$$
\forall\:x=(x_j)_{j\in\NN}\in\omega\colon \|h(x)\|\hspace{2pt}\leqslant \hspace{2pt}C\cdot\hspace{-3pt}\max_{j=1,\dots,N}|x_j|.
$$
It follows $\{x\in\omega\:|\:x_1=x_2=\cdots=x_N=0\}\subseteq h^{-1}(0)$ and thus $h(\omega)$ is of finite dimension. 
Let $n:=\dim h(\omega)$ and denote by $p\colon\omega\rightarrow\KK^{n+1}$ the projection onto the first $n+1$ components. By the cokernel property of $g$ in $\FreDN$ there exists a map $k\colon G\rightarrow\KK^{n+1}$ with $kg=pq$:
$$
\begin{tikzcd}[row sep=0.5cm]
& & & G\ar[rrdd, swap,"k"']\\
F\ar[rrru, bend left = 18, swap, "0"']\ar[rrrd, bend right = 18, "0"']\ar[rr, swap, "f"']&&E\ar[ur, swap,"g"']\ar[dr, "q"']& \\
& & & \omega\ar[uu, "h"']\ar[rr, "p"']&& \KK^{n+1}.
\end{tikzcd}
$$
It follows $khq = kg = pq$ and, since $q$ is surjective, we obtain $kh=p$. But this means $n+1=\dim p(\omega)=\dim k(h(\omega))\leqslant n$.\hfill\diam
\end{ex}

We emphasize that $(\FreDN,\DD_{\operatorname{max},\,\FreDN})$ is strongly deflation-exact with kernels and thus the corresponding results of Section \ref{SEC-LRH} hold true. In particular $\mathcal{LH}(\FreDN,\DD_{\operatorname{max},\,\FreDN})$ is bounded derived equivalent to $(\FreDN,\DD_{\operatorname{max},\,\FreDN})$ and admits an explicit description through 3-term complexes.


\section{Bornological Modules}\label{SEC:BOR}

Bornological categories have been investigated since the 1970s. Here we follow \cite[Section 2]{Meyer3} and focus on categories of bornological left modules over a $\KK$-algebra $A$, where $\KK$ denotes the complex numbers, and we tacitly assume that bornologies are complete and convex and algebras are unital. We mention however that generalizations of this setting are possible and appear in the literature; in particular one can work over other fields or drop/replace unitality.

\smallskip

Given a bornological $\KK$-algebra $A$ let $\BMod(A)$ denote the category of bornological left modules over $A$. The latter is a quasiabelian category \cite[Section 2.1--2.2]{Meyer3} and one can consider its left or right heart as in Theorem \ref{THM-HEART-2}. This has been done, e.g., in \cite{KKM, Savage, BK24, Bode} in the context of derived geometry.

\smallskip

For other applications, in particular in Hochschild cohomology, using all kernel-cokernel pairs as exact sequences on the quasiabelian category $\BMod(A)$ has certain disadvantages \cite[p.~3]{Meyer3}.  In order to rectify this, \cite[Section 2.4]{Meyer3} endows $\BMod(A)$ with the \emph{linearly split exact structure}
\begin{align*}
\EE_{\operatorname{lin\,split},\,\BMod(A)}&=\bigl\{E\xrightarrow{\hspace{2pt}f\hspace{2pt}}F\xrightarrow{\hspace{2pt}g\hspace{2pt}}G\;\big|\;(f,g) \text{ is a kernel-cokernel pair in }\BMod(A)\\[-2pt]
& \hspace{101pt}\exists\:s\colon G\rightarrow F \text{ bounded and $\KK$-linear}\colon gs=\id_G\bigr\}\vspace{-5pt}
\end{align*}
instead, i.e., the conflations are those short exact sequences of bornological modules that split when considered as short exact sequences of bornological vector spaces. This choice makes $(\BMod(A),\EE_{\operatorname{lin\,split},\,\BMod(A)})$ two-sided exact and guarantees that there are enough projectives and enough injectives \cite[Prop 1 in \S\,2.4 and Prop 3 und \S\,2.5]{Meyer3}. The linearly split exact structure has also been used in \cite{OS09, PPTT11, KS25}.

\begin{thm}\label{BMOD-THM} The category $(\BMod(A),\EE_{\operatorname{lin\,split},\,\BMod(A)})$ can be embedded likewise into its left or right heart, the embedding lifts to a bounded derived equivalence, the hearts are given by categories of 3-term complexes as described in Section \ref{SEC-LRH}.\diam{}
\end{thm}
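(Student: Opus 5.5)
The plan is to reduce everything to the general results of Section \ref{SEC-LRH} and their duals. The hypotheses needed there are: $(\BMod(A),\EE_{\operatorname{lin\,split},\,\BMod(A)})$ is strongly deflation-exact with kernels, respectively strongly inflation-exact with cokernels, and $\Der^{\bb}$ is locally small. We will verify these hypotheses and then read off the conclusions.

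\emph{Kernels and cokernels.} Since $\BMod(A)$ is quasiabelian \cite{Meyer3}, it is preabelian and in particular karoubian. Hence every morphism has a kernel and a cokernel.

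\emph{Exactness.} By \cite[Prop 1 in \S\,2.4]{Meyer3} the linearly split structure is a (two-sided) Quillen exact structure. Thus \hypref{R0}--\hypref{R2} and \textcolor{RoyalBlue}{\textbf{L0}}--\textcolor{RoyalBlue}{\textbf{L2}} hold, and we only need to check \hypref{R3} and \textcolor{RoyalBlue}{\textbf{L3}}.

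\emph{The axiom \hypref{R3}.} Let $p\colon F\rightarrow G$ be given and suppose $p\,i$ is a deflation for some $i\colon E\rightarrow F$. First, $p$ is a cokernel in the quasiabelian category, because it is split epic as a bornological vector space map. Concretely, a bounded linear section $s$ of $p\,i$ gives the bounded linear section $i s$ of $p$. A split epimorphism of bornological spaces is a cokernel of its kernel in the underlying bornological vector spaces. We then need this to hold in $\BMod(A)$ as well, and here we use that kernels and cokernels in $\BMod(A)$ are computed on underlying bornological spaces \cite[\S\,2.1--2.2]{Meyer3}. Hence $(\ker p,p)$ is a kernel-cokernel pair with a bounded linear section, that is, a conflation. \textcolor{RoyalBlue}{\textbf{L3}} follows dually: a bounded linear retraction of $p\,i$ gives the retraction $rp$ of $i$. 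Alternatively one can cite that any exact category with kernels (in which idempotents split) satisfies \hypref{R3}/\textcolor{RoyalBlue}{\textbf{L3}}, cf.\ the obscure axiom. I expect the check that these kernels and cokernels are computed underlying-wise to be the only point needing care.

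\emph{Size.} \cite[Prop 3 in \S\,2.5]{Meyer3} provides enough projectives (and injectives). Therefore morphisms in $\Der^{\bb}$ can be computed in the homotopy category of projective resolutions, so $\Der^{\bb}$ is locally small, as in Remark \ref{SIZE}.

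\emph{Conclusion.} Applying Theorem \ref{THM-LH-PHI} gives an embedding into the left heart together with a bounded derived equivalence. Theorem \ref{NEW-LH} then gives $\mathcal{LH}\cong\hMork(\BMod(A))[\mathcal{S}^{-1}]$, and Theorem \ref{KER-COK-PROP} supplies kernels and cokernels. For the right heart we use the duals, Remark \ref{INF-RMK}(iv). Note that admissibility of kernels fails here, since not every kernel is linearly split. This is exactly why the 3-term description of Section \ref{SEC-LRH} is needed rather than Theorem \ref{THM-HEART-2}.
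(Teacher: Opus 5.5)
Your proposal is correct and follows the paper's own route. The paper states the theorem without a separate proof because it follows from the preceding material exactly as you describe: $\BMod(A)$ is quasiabelian, so it has kernels and cokernels; Meyer's result makes the linearly split structure two-sided exact with enough projectives and injectives, which settles the size issue as in Remark \ref{SIZE}; and then Theorems \ref{THM-LH-PHI}, \ref{NEW-LH}, \ref{KER-COK-PROP} and their duals (Remark \ref{INF-RMK}) apply. Your hands-on verification of \hypref{R3} and \textcolor{RoyalBlue}{\textbf{L3}} is valid but redundant, since in any Quillen exact category these axioms follow from the others (Keller), so two-sided exactness already provides them.
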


\begin{rmk} Consider two bornological modules such that $E\subseteq F$ is algebraically a submodule with bounded inclusion map $i\colon E\rightarrow F$ but possibly not (bornologically) closed. Then we may consider the object $0\hookrightarrow E\xrightarrow{\hspace{1pt}\scriptscriptstyle i\hspace{1pt}}F$ in $\mathcal{LH}(\BMod(A),\EE_{\operatorname{lin\,split},\,\BMod(A)})$ as a formal quotient. A map given by a commutative diagram between two such quotients will be an isomorphism in the left heart iff the corresponding diagram is an $\EE_{\operatorname{lin\,split},\,\BMod(A)}$-pulation.\diam{}
\end{rmk}

In \cite[Section 5]{Meyer3}, a morphism of bornological algebras $i\colon A\rightarrow B$ is called \emph{isocohomological} if it induces a fully faithful functor $i^{*}\colon\Der(\BMod(B))\rightarrow\Der(\BMod(A))$, where both derived categories are taken with respect to the linearly split exact sequences. Assuming that the unbounded derived categories are well-defined, this is equivalent to $i$ inducing a fully faithful functor between the derived categories of the (abelian!) hearts.




\appendix

\section{The Left Canonical T-Structure}\label{SEC-APP}

Below we give a streamlined construction of the left canonical t-structure on $\Der(\mathcal{A},\CC)$ for a strongly deflation-exact category $(\mathcal{A},\CC)$ with kernels, employing our definition of acyclicity given in Definition \ref{DFN-AC-N}. We first recall the following.

\begin{prop}\label{TRIAG-K}\emph{(\cite[Prop 3.2]{HKRW}, see \cite[Prop 3.13]{HR19} for a proof)} Let $\mathcal{A}$ be an additive category with kernels. Then $\tau_{\LL}^{\leqslant n}C^{\bullet}\rightarrow{}C^{\bullet}\rightarrow\tau^{\geqslant n+1}_{\LL}C^{\bullet}\rightarrow\Sigma(\tau^{\leqslant n}_{\LL}C^{\bullet})$ is a dis\-tin\-guish\-ed triangle in $\K(\mathcal{A})$ for any $C^{\bullet}\in\C(\mathcal{A})$ and any $n\in\ZZ$.\hfill\diam
\end{prop}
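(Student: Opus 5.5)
The plan is a direct construction of a distinguished triangle in $\K(\mathcal{A})$ of the standard form $\tau_{\LL}^{\leqslant n}C^{\bullet}\xrightarrow{\iota}C^{\bullet}\xrightarrow{\pi}\tau^{\geqslant n+1}_{\LL}C^{\bullet}\rightarrow\Sigma(\tau^{\leqslant n}_{\LL}C^{\bullet})$. It suffices to exhibit a homotopy equivalence $\cone(\iota)\simeq\tau^{\geqslant n+1}_{\LL}C^{\bullet}$ compatible with the maps out of $C^{\bullet}$.

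First, I fix the chain maps. The inclusion $\iota\colon\tau_{\LL}^{\leqslant n}C^{\bullet}\rightarrow C^{\bullet}$ is the identity in degrees $\leqslant n-1$, is $i^{n-1}\colon\ker d^{\,n}\rightarrow C^{n}$ in degree $n$, and is zero above. The projection $\pi\colon C^{\bullet}\rightarrow\tau^{\geqslant n+1}_{\LL}C^{\bullet}$ is the identity in degrees $\geqslant n+1$, is $p^{n-1}$ in degree $n-1$ (landing in $\ker d^{\,n}$, placed in degree $n-1$), and is zero below. The relation $i^{n-1}p^{n-1}=d^{\,n-1}$ gives the chain map property in each case.

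Next, I write out $\cone(\iota)$ with the paper's convention: $(\cone\iota)^k=(\tau^{\leqslant n}C)^{k+1}\oplus C^k$. In degrees $k\leqslant n-2$ this is $C^{k+1}\oplus C^k$ with the cone of the identity. In degree $n-1$ it is $\ker d^{\,n}\oplus C^{n-1}$, in degree $n$ it is $C^n$, and above it agrees with $C^{\bullet}$. I define $\psi\colon\cone(\iota)\rightarrow\tau^{\geqslant n+1}_{\LL}C^{\bullet}$ as $[\,1\;\;p^{n-1}\,]$ in degree $n-1$, the identity in degrees $\geqslant n$, and zero below. I define $\theta$ in the other direction as the natural inclusion $\bigl[\begin{smallmatrix}1\\0\end{smallmatrix}\bigr]$ in degree $n-1$ and the identity above. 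Then $\psi\theta=\id$ on the nose. For $\theta\psi\simeq\id$, I write down the homotopy $h^k\colon(\cone\iota)^k\rightarrow(\cone\iota)^{k-1}$ given by $\bigl[\begin{smallmatrix}0&1\\0&0\end{smallmatrix}\bigr]$-type maps sending the $C^k$ summand to the $C^{k}$ summand of $(\tau^{\leqslant n}C)^{k}$ in degrees $\leqslant n-1$. The identity $\id-\theta\psi=dh+hd$ is then checked degreewise; in degree $n-1$ it uses the kernel property of $i^{n-1}$, namely that $i^{n-1}$ is monic. Finally, $\psi\circ(\text{canonical }C^{\bullet}\rightarrow\cone\iota)=\pi$, so the standard cone triangle transports to the stated one.

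The main obstacle is the degree $n-1$ and $n-2$ bookkeeping. The homotopy there must absorb the fact that the component $\ker d^{\,n}$ sits in two different degrees in the two truncations. Signs must also match the convention $d_{\cone}=\bigl[\begin{smallmatrix}-d&0\\f&d\end{smallmatrix}\bigr]$. If a direct homotopy becomes messy, the fallback is to split off the contractible summands $\cone(\id_{C^{\leqslant n-1}})$ using the degreewise split short exact sequence $0\rightarrow C^{\bullet}\rightarrow\cone\iota\rightarrow\Sigma\tau^{\leqslant n}C^{\bullet}\rightarrow0$. Only the existence of kernels is used throughout, and no exact structure is needed, consistent with the hypotheses.
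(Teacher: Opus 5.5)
Your proposal is correct. The paper gives no argument of its own for this proposition; it only refers to \cite{HKRW} and \cite[Prop 3.13]{HR19}, so there is no in-text route to compare against. Your explicit homotopy equivalence $\psi\colon\cone(\iota)\rightarrow\tau_{\LL}^{\geqslant n+1}C^{\bullet}$, with homotopy inverse $\theta$, is a complete and self-contained verification. It also uses nothing beyond the existence of kernels.

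Three bookkeeping points should be fixed when you write it out.

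\textbf{The degree-$n$ component of $\pi$.} The map $\pi$, like $\psi$, must be the identity of $C^n$ in degree $n$ as well, not only in degrees $\geqslant n+1$.

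\textbf{Where monicity of $i^{n-1}$ enters.} It is needed for the relation $p^{n-1}d^{\,n-2}=0$, which follows from $i^{n-1}p^{n-1}d^{\,n-2}=d^{\,n-1}d^{\,n-2}=0$. This relation is what makes $\pi$ and $\psi$ chain maps in the square from degree $n-2$ to degree $n-1$; for instance $[\hspace{1pt}1\;\,p^{n-1}\hspace{1pt}]\,d_{\cone}^{\,n-2}=[\hspace{1pt}0\;\,p^{n-1}d^{\,n-2}\hspace{1pt}]$. So the identity $i^{n-1}p^{n-1}=d^{\,n-1}$ alone does not give the chain map property there.

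\textbf{The homotopy identity needs no monicity.} Take $h^k=\bigl[\begin{smallmatrix}0&1\\0&0\end{smallmatrix}\bigr]$ for $k\leqslant n-1$ and $h^k=0$ for $k\geqslant n$.
\begin{itemize}
\item In degree $n-1$ one gets directly $d_{\cone}^{\,n-2}h^{n-1}=\bigl[\begin{smallmatrix}0&-p^{n-1}\\0&1\end{smallmatrix}\bigr]=\id-\theta\psi$.
\item In degree $n-2$ the usual cone-of-the-identity computation goes through, because the second row of $d_{\cone}^{\,n-2}$ is $[\hspace{1pt}\iota^{n-1}\;\,d^{\,n-2}\hspace{1pt}]=[\hspace{1pt}1\;\,d^{\,n-2}\hspace{1pt}]$.
\end{itemize}

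Finally, the third morphism of your triangle is $[\hspace{1pt}1\;\,0\hspace{1pt}]\theta$, i.e.\ the identity of $\ker d^{\,n}$ sitting in degree $n-1$. The isomorphism $\theta$ identifies your triangle with the standard cone triangle of $\iota$. The middle square commutes in $\K(\mathcal{A})$ because $\theta\pi=\theta\psi c\simeq c$ for the canonical map $c=\bigl[\begin{smallmatrix}0\\1\end{smallmatrix}\bigr]\colon C^{\bullet}\rightarrow\cone(\iota)$.
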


If $\mathcal{A}$ has kernels, the truncation functors from \eqref{TRUNC} on the homotopy category induce a t-structure on $\K(\mathcal{A})$ as follows. The proof is a straightforward verification of the t-structure axioms and the equalities given below which we will omit.

\begin{prop}\label{K-T-STR}\cite[text after Prop 3.2]{HKRW} Let $\mathcal{A}$ be an additive category with kernels. Then $(\K^{\leqslant0}_L(\mathcal{A}),\K^{\geqslant0}_L(\mathcal{A}))$ with
\begin{eqnarray*}
\K^{\leqslant0}_{\LL}(\mathcal{A}) &\hspace{-7pt}=&\hspace{-7pt}\bigl\{C^{\bullet}\in\K(\mathcal{A})\:|\:\tau_{\LL}^{\geqslant1}C^{\bullet}\cong0\bigr\}\\[-3pt]
&\hspace{-7pt}= &\hspace{-7pt}\bigl\{C^{\bullet}\in\K(\mathcal{A})\:|\:\forall\:n\geqslant 1\colon\ker d^{\hspace{1pt}n-1}\stackrel{i^{n-2}}{\longrightarrow}C^{n-1}\stackrel{p^{n-1}}{\longrightarrow}\ker d^{\hspace{1pt}n}\text{ is a split ker-cok pair}\bigr\},\\[5pt]
\K^{\geqslant0}_{\LL}(\mathcal{A}) &\hspace{-7pt}= &\hspace{-7pt}\bigl\{C^{\bullet}\in\K(\mathcal{A})\:|\:\tau_{\LL}^{\leqslant-1}C^{\bullet}\cong0\bigr\}\\[-3pt]
&\hspace{-7pt}= &\hspace{-7pt}\bigl\{C^{\bullet}\in\K(\mathcal{A})\:|\:\forall\:n\leqslant-1\colon\ker d^{\hspace{1pt}n-1}\stackrel{i^{n-2}}{\longrightarrow}C^{n-1}\stackrel{p^{n-1}}{\longrightarrow}\ker d^{\hspace{1pt}n}\text{ is a split ker-cok pair}\bigr\}
\end{eqnarray*}
is a t-structure on $\K(\mathcal{A})$.\hfill\diam
\end{prop}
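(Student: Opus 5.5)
The plan is to verify the t-structure axioms directly in $\K(\mathcal{A})$ and then read off the split-sequence characterizations from the truncations. The three axioms are: shift-closedness, Hom-orthogonality, and existence of truncation triangles.

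\emph{Truncation triangles.} These come straight from Proposition~\ref{TRIAG-K}. For $n=0$ we have a distinguished triangle $\tau_{\LL}^{\leqslant0}C^{\bullet}\rightarrow C^{\bullet}\rightarrow\tau_{\LL}^{\geqslant1}C^{\bullet}\rightarrow\Sigma(\cdot)$. It then suffices to check two things.
\begin{itemize}
\item $\tau_{\LL}^{\geqslant1}(\tau_{\LL}^{\leqslant0}C^{\bullet})\cong0$. Applying $\tau_{\LL}^{\geqslant1}$ to $\tau_{\LL}^{\leqslant0}C^{\bullet}$ yields $0\rightarrow\ker d^{0}\xrightarrow{1}\ker d^{0}\rightarrow 0$, up to a trivial term. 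This is contractible.
\item $\tau_{\LL}^{\leqslant-1}(\tau_{\LL}^{\geqslant1}C^{\bullet})\cong0$. The kernel of $i^{-1}\colon\ker d^0\rightarrow C^0$ is zero, and the relevant truncation is a complex concentrated in the part where it is contractible.
\end{itemize}

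\emph{Shift-closedness.} The inclusions $\Sigma\K^{\leqslant0}_{\LL}\subseteq\K^{\leqslant0}_{\LL}$ and $\Sigma^{-1}\K^{\geqslant0}_{\LL}\subseteq\K^{\geqslant0}_{\LL}$ follow from the description via split pairs. Shifting only changes signs of the differentials and moves the indices into the range of the condition, and split kernel-cokernel pairs are stable under sign changes.

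\emph{The split-pair description.} I would prove the equality of the two descriptions first, since it makes both shift-closedness and orthogonality transparent. Assume $\tau_{\LL}^{\geqslant1}C^{\bullet}$ is contractible, with homotopy $h$. Then the complex $\ker d^0\rightarrow C^0\rightarrow C^1\rightarrow\cdots$ is split exact in the additive sense. Each differential then factors as a split epi followed by a split mono. Since kernels exist and the pieces are determined as kernels, the sequences $\ker d^{n-1}\rightarrow C^{n-1}\rightarrow\ker d^n$ are split kernel-cokernel pairs for $n\geqslant1$. Conversely, given the splittings for all $n\geqslant1$, I would assemble them into an explicit contracting homotopy of $\tau_{\LL}^{\geqslant1}C^{\bullet}$. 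The $\K^{\geqslant0}_{\LL}$ case is dual in indexing.

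\emph{Orthogonality.} Let $A\in\K^{\leqslant0}_{\LL}$ and $B\in\K^{\geqslant1}_{\LL}=\Sigma^{-1}\K^{\geqslant0}_{\LL}$. Replace $A$ by $\tau_{\LL}^{\leqslant0}A$, which has components only in degrees $\leqslant0$ and ends in $\ker d_A^0$. Replace $B$ by $\tau_{\LL}^{\geqslant1}B$, which starts with $\ker d_B^1$ in degree $0$, followed by $B^1,\dots$. A chain map $A\rightarrow B$ is then nonzero only in degree $0$: a map $\ker d_A^0\rightarrow\ker d_B^1$ whose composite with $i$ is zero. Since $i$ is monic, the map vanishes. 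I expect this degree-$0$ bookkeeping to be the main obstacle, in particular keeping track of the index conventions in Proposition~\ref{TRIAG-K}. It may be cleaner to truncate $B$ at level $0$ so that $\ker d_B^0\rightarrow B^0$ sits in degrees $-1,0$. Then a chain map from $A$ (ending in degree $0$) into this complex factors through the monic $i$ and is forced to be null-homotopic.
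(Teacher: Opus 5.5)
Your overall plan is the direct verification of the axioms that the paper has in mind (the paper omits the proof as straightforward). The shift-closedness argument and the equivalence with the split-pair description are fine: a contracting homotopy $h$ of $\tau_{\LL}^{\geqslant1}C^{\bullet}$ gives sections $h^ni^{n-1}$ of $p^{n-1}$, and conversely the splittings assemble to $h^n=s^nr^{n-1}$, $h^0=r^{-1}$. The problem is the degree bookkeeping for $\K^{\geqslant1}_{\LL}(\mathcal{A})=\Sigma^{-1}\K^{\geqslant0}_{\LL}(\mathcal{A})$, which leaves axiom (T3) unproved as written.

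\emph{The truncation triangle.} Since $\tau_{\LL}^{\leqslant-1}\circ\Sigma\cong\Sigma\circ\tau_{\LL}^{\leqslant0}$, a complex $B^{\bullet}$ lies in $\K^{\geqslant1}_{\LL}(\mathcal{A})$ iff $\tau_{\LL}^{\leqslant0}B^{\bullet}\cong0$. So for (T3) you need $\tau_{\LL}^{\leqslant0}(\tau_{\LL}^{\geqslant1}C^{\bullet})\cong0$. What you check instead is $\tau_{\LL}^{\leqslant-1}(\tau_{\LL}^{\geqslant1}C^{\bullet})\cong0$, which holds trivially because it is the zero complex. That only shows $\tau_{\LL}^{\geqslant1}C^{\bullet}\in\K^{\geqslant0}_{\LL}(\mathcal{A})$, which is too weak. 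The fix is one line. The complex $\tau_{\LL}^{\geqslant1}C^{\bullet}$ has $\ker d^0$ in degree $-1$ and $d^0$ as the differential leaving degree $0$. Hence $\tau_{\LL}^{\leqslant0}(\tau_{\LL}^{\geqslant1}C^{\bullet})$ is $\ker d^0\xrightarrow{1}\ker d^0$ in degrees $-1,0$, which is contractible.

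\emph{Orthogonality.} Your first version fails for the dual reason. The complex with $\ker d_B^1$ in degree $0$ followed by $B^1,\ldots$ is $\tau_{\LL}^{\geqslant2}B^{\bullet}$, and $B^{\bullet}\cong\tau_{\LL}^{\geqslant2}B^{\bullet}$ would require $B^{\bullet}\in\K^{\geqslant2}_{\LL}(\mathcal{A})$. For example, over abelian groups $B^{\bullet}=(\ZZ\xrightarrow{2}\ZZ)$ in degrees $0,1$ lies in $\K^{\geqslant1}_{\LL}$, while $\tau_{\LL}^{\geqslant2}B^{\bullet}=(\ZZ\xrightarrow{1}\ZZ)$ is contractible.

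Your alternative at the end is the correct argument. It is worth writing out, because the chain maps need not vanish; they are only null-homotopic. Take $A^{\bullet}=\tau_{\LL}^{\leqslant0}A^{\bullet}$ and $B^{\bullet}=\tau_{\LL}^{\geqslant1}B^{\bullet}$. A chain map then has only two components, $f^{-1}\colon A^{-1}\rightarrow\ker d_B^0$ and $f^0\colon\ker d_A^0\rightarrow B^0$.
\begin{itemize}
\item From $d_B^0f^0=0$ you get $f^0=i_B^{-1}g$ for some $g\colon\ker d_A^0\rightarrow\ker d_B^0$.
\item Monicity of $i_B^{-1}$ then gives $f^{-1}=g\,p_A^{-1}$.
\item The map $g$ itself, placed from degree $0$ to degree $-1$, is a null-homotopy.
\end{itemize}

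Finally, for both classes to be closed under isomorphism in $\K(\mathcal{A})$, you should record that the truncations preserve homotopies. In the critical degree a homotopy $h$ induces $p^{n-1}h^n$ for $\tau_{\LL}^{\geqslant n+1}$ and $h^ni^{n-1}$ for $\tau_{\LL}^{\leqslant n}$.
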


Using Definition \ref{DFN-AC-N} the following can be proven very easily.

\begin{lem}\label{EQUIV-LEM} Let $(\mathcal{A},\CC)$ be a strongly deflation-exact category with kernels and let $X^{\bullet}$ and $Y^{\bullet}$ be complexes over $\mathcal{A}$ satisfying $X^{\bullet}\cong Y^{\bullet}$ in $\K(\mathcal{A})$. Let $n\in\ZZ$ be fixed. Then $X^{\bullet}$ is acyclic in degree $n$ iff $Y^{\bullet}$ is acyclic in degree $n$.
\end{lem}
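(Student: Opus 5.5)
The plan is to reduce the statement to the invariance of the left cohomology functor under isomorphisms in $\K(\mathcal{A})$. By Definition \ref{DFN-AC-N}, $X^{\bullet}$ is acyclic in degree $n$ iff $(i^{n-2}_X,p^{n-1}_X)\in\CC$, and this is the same as $\operatorname{LH}^n(X^{\bullet})=0$ in $\mathcal{LH}(\mathcal{A},\CC)$. The complex $\operatorname{LH}^n(X^{\bullet})$ from \eqref{EQ-LH} is obtained from $X^{\bullet}$ by the functorial truncations $\tau_{\LL}^{\leqslant0}\tau_{\LL}^{\geqslant0}\Sigma^n$ of \eqref{TRUNC}. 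These truncations are built only from kernels and restrictions of chain maps, so they are functors on $\C(\mathcal{A})$. I will first check that they also descend to $\K(\mathcal{A})$, i.e.\ that they send homotopic chain maps to homotopic chain maps. Once this is done, an isomorphism $X^{\bullet}\cong Y^{\bullet}$ in $\K(\mathcal{A})$ gives an isomorphism of the three-term complexes
$$
\ker d_X^{\hspace{1pt}n-1}\to X^{n-1}\to\ker d_X^{\hspace{1pt}n}\quad\text{and}\quad \ker d_Y^{\hspace{1pt}n-1}\to Y^{n-1}\to\ker d_Y^{\hspace{1pt}n}
$$
in $\K(\mathcal{A})$, equivalently in $\hMork(\mathcal{A})$.

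For the descent step, let $\alpha,\beta\colon X^{\bullet}\to Y^{\bullet}$ be homotopic via maps $h^k$. On the truncated complex the induced maps differ by the maps $h^{n-1}$ and $h^{n}i^{n-1}_X\colon\ker d^{\hspace{1pt}n}_X\to Y^{n-1}$. Here the only component that needs attention is the one on $\ker d^{\hspace{1pt}n}$. Using the description in Theorem \ref{NEW-LH}(ii), a single map $\rho:=h^ni^{n-1}_X$ into the middle term suffices, and one checks
$$
(\alpha^n-\beta^n)\,i^{n-1}_X=d^{\hspace{1pt}n-1}_Y\rho,
$$
which factors through $p^{n-1}_Y$. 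This works because $d^{\hspace{1pt}n}_Xi^{n-1}_X=0$ kills the term $h^{n+1}d^{\hspace{1pt}n}_X$.

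It then remains to show that if the three-term complexes are homotopy equivalent, one being a conflation forces the other to be. The complex is a conflation exactly when it is $\CC$-acyclic as a complex (Theorem \ref{RMK-DC}(iii)), using $i^{n-1}=\ker$ of the next differential. $\Ac(\mathcal{A},\CC)$ is replete in $\K(\mathcal{A})$ by Definition \ref{ACYCLIC}(ii), so acyclicity transfers along the homotopy equivalence. The main obstacle is this final point. One must make sure that Theorem \ref{RMK-DC}(iii) applies to the truncated complex even though its first map is merely a kernel and not known to be an inflation. If this is delicate, I would fall back on the argument via $\operatorname{LH}^n$. Since $\operatorname{LH}^n$ is a functor on $\Der(\mathcal{A},\CC)$ and $X^{\bullet}\cong Y^{\bullet}$ in $\K(\mathcal{A})$ implies the same in $\Der(\mathcal{A},\CC)$, we get $\operatorname{LH}^n(X^{\bullet})\cong\operatorname{LH}^n(Y^{\bullet})$. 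Vanishing is then invariant, and the equivalence in Definition \ref{DFN-AC-N} finishes the proof.
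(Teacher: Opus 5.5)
Your main argument is correct and is essentially the paper's proof. You pass to the three-term truncations, which are isomorphic in $\K(\mathcal{A})$, and conclude via Theorem \ref{RMK-DC}(iii) and the repleteness of $\Ac(\mathcal{A},\CC)$; your explicit check that the truncation respects homotopies, via the single map $\rho=h^{n}i^{n-1}_X$, is a detail the paper takes for granted. The obstacle you flag is not real, since Theorem \ref{RMK-DC}(iii) holds for arbitrary sequences. You should also drop the fallback via $\operatorname{LH}^n$, because the appendix uses this lemma to construct the very t-structure on which $\operatorname{LH}^n$ and the equivalence in Definition \ref{DFN-AC-N} depend, so the fallback would be circular.
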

\begin{proof} As $X^{\bullet}\cong Y^{\bullet}$ holds in $\K(\mathcal{A})$ we get $\tau^{\leqslant n}_{\LL}\tau_{\LL}^{\geqslant n}(X^{\bullet})\cong\tau^{\leqslant n}_{\LL}\tau_{\LL}^{\geqslant n}(Y^{\bullet})$ in $\K(\mathcal{A})$ meaning that the complexes
\[
\cdots\rightarrow0\rightarrow\ker d_X^{\hspace{1pt}n-1}\stackrel{i^{n-2}_X}{\longrightarrow}X^{n-1}\stackrel{p^{n-1}_X}{\longrightarrow}\ker d_X^{\hspace{1pt}n}\rightarrow0\rightarrow\cdots
\]
and
\[
\cdots\rightarrow0\rightarrow\ker d_Y^{\hspace{1pt}n-1}\stackrel{i^{n-2}_Y}{\longrightarrow}Y^{n-1}\stackrel{p^{n-1}_Y}{\longrightarrow}\ker d_Y^{\hspace{1pt}n}\rightarrow0\rightarrow\cdots
\]
are isomorphic in $\K(\mathcal{A})$. The stated equivalence follows now from Theorem \ref{RMK-DC}(iii) and  $\Ac(\mathcal{A},\CC)$ being closed under isomorphisms in $\K(\mathcal{A})$ since the underlying category is strongly deflation-exact and karoubian, as we noted in Definition \ref{ACYCLIC}(ii).
\end{proof}

The standard method to get a t-structure on a quotient category is by the following result.

\begin{prop}\label{SCH-LEM}\emph{(\cite[Prop 3.3]{HKRW}, see \cite[Lem 1.2.17]{S} for a proof)} Let $(\mathcal{T}^{\leqslant0},\mathcal{T}^{\geqslant0})$ be a t-structure on a triangulated category $\mathcal{T}$ and let $\mathcal{N}$ be a thick subcategory. Write $Q\colon\mathcal{T}\rightarrow\mathcal{T}/\mathcal{N}$ for the corresponding quotient. The pair $(Q(\mathcal{T}^{\leqslant0}),Q(\mathcal{T}^{\geqslant0}))$ is a t-structure on $\mathcal{T}/\mathcal{N}$ iff for every distinguished triangle $X_1\rightarrow X_0\rightarrow N\rightarrow \Sigma X_1$ with $X_1\in\mathcal{T}^{\geqslant1}$, $X_0\in\mathcal{T}^{\leqslant0}$ and $N\in\mathcal{N}$ we have $X_1, X_0\in\mathcal{N}$.\hfill\diam
\end{prop}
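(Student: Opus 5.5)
The statement just given is Proposition \ref{SCH-LEM}, the standard criterion for a t-structure to descend to a Verdier quotient. I would prove the two implications separately. Throughout, $\mathcal{T}^{\geqslant1}=\Sigma^{-1}\mathcal{T}^{\geqslant0}$, the quotient functor $Q$ is exact, and a morphism in $\mathcal{T}/\mathcal{N}$ is a roof $X\xleftarrow{s}X'\to Y$ with $\cone(s)\in\mathcal{N}$.

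For ``$\Longrightarrow$'' I start from a triangle $X_1\to X_0\to N\to\Sigma X_1$ with $X_1\in\mathcal{T}^{\geqslant1}$, $X_0\in\mathcal{T}^{\leqslant0}$ and $N\in\mathcal{N}$. Applying $Q$ and using $Q(N)\cong0$ gives $Q(X_1)\cong Q(X_0)$. This object lies in $Q(\mathcal{T}^{\leqslant0})\cap Q(\mathcal{T}^{\geqslant1})$. If the images form a t-structure, orthogonality forces $\Hom(Q(X_0),Q(X_1))=0$, so the isomorphism is zero and $Q(X_0)\cong0\cong Q(X_1)$. Since $\mathcal{N}$ is thick, the kernel of $Q$ is exactly $\mathcal{N}$, hence $X_0,X_1\in\mathcal{N}$.

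For ``$\Longleftarrow$'' I check the three t-structure axioms.
\begin{compactitem}
\item[(a)] The shift-closure axioms hold because $Q$ commutes with $\Sigma$.
\item[(b)] The truncation triangles are obtained by applying $Q$ to those in $\mathcal{T}$.
\item[(c)] Orthogonality is the real content. Take $X\in\mathcal{T}^{\leqslant0}$, $Y\in\mathcal{T}^{\geqslant1}$ and a morphism $Q(X)\to Q(Y)$. Represent it by a left roof $X\xrightarrow{f}Y'\xleftarrow{s}Y$ with $\cone(s)=N\in\mathcal{N}$; it suffices to show $f$ factors through something that dies.
\end{compactitem}

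For (c), I first truncate $Y'$ with respect to the t-structure on $\mathcal{T}$. Set $A:=\tau^{\leqslant0}Y'$ and $B:=\tau^{\geqslant1}Y'$. Since $X\in\mathcal{T}^{\leqslant0}$, the map $f$ factors through $A\to Y'$.

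Next I consider the composite $Y\xrightarrow{s}Y'\to B$. Its cone, together with the octahedral axiom applied to $Y\to Y'\to B$, produces a triangle relating $\cone(s)=N$ and $A$.

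Concretely, I would use the octahedron on $A\to Y'\to N'$, where $N'$ comes from the triangle $Y\to Y'\to N$. I expect to obtain a triangle of the form $Y''\to A\to N\to\Sigma Y''$ with $Y''\in\mathcal{T}^{\geqslant1}$, built from $Y$ and $\Sigma^{-1}B$. Here I use that $\mathcal{T}^{\geqslant1}$ is extension closed and that both $Y$ and $\Sigma^{-1}B$ have the right degrees; the exact degree bookkeeping needs care. The hypothesis then gives $A\in\mathcal{N}$. Hence $f$ factors through an object of $\mathcal{N}$, so $Q(f)=0$ and the roof is zero.

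The main obstacle is this octahedral bookkeeping: producing a triangle with its first term in $\mathcal{T}^{\geqslant1}$, its middle term in $\mathcal{T}^{\leqslant0}$ and its third term in $\mathcal{N}$. If the direct octahedron does not land in the right degrees, my fallback is to replace $Y'$ by the cone of $A\to Y'$ composed appropriately. Alternatively, I would follow \cite[Lem 1.2.17]{S} verbatim.
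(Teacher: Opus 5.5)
Your proof is correct and is the standard argument that the paper delegates to \cite[Lem 1.2.17]{S}: the forward direction uses $Q(X_1)\cong Q(X_0)$ together with orthogonality and the thickness of $\mathcal{N}$, and the converse reduces to $\Hom(Q(X),Q(Y))=0$, factors $f$ through $A=\tau^{\leqslant0}Y'$ and shows $A\in\mathcal{N}$. The octahedral step you hedged on works exactly as you predicted, so no fallback is needed: the octahedron on the composite $A\to Y'\to N$, whose three cones are $B$, $\Sigma Y$ and $\Sigma Y''$ with $Y''\to A\to N\to\Sigma Y''$, gives a triangle $\Sigma^{-1}B\to Y''\to Y\to B$, hence $Y''$ is an extension of $Y\in\mathcal{T}^{\geqslant1}$ by $\Sigma^{-1}B\in\mathcal{T}^{\geqslant2}\subseteq\mathcal{T}^{\geqslant1}$, and the hypothesis yields $A\in\mathcal{N}$.
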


The next result is the key observation necessary in order to verify the criterion in Proposition \ref{SCH-LEM} for our concrete situation. Notice the comments on its origin in Remark \ref{FIN-RMK}.

\begin{lem}\label{NEW-PROP} Let $(\mathcal{A},\CC)$ be a strongly deflation-exact category with kernels. Consider a distinguished triangle $X^{\bullet}\xrightarrow{\scriptscriptstyle f}Y^{\bullet}\xrightarrow{\scriptscriptstyle g}Z^{\bullet}\xrightarrow{\scriptscriptstyle h}\Sigma X^{\bullet}$ in $\K(\mathcal{A})$ and let $n\in\ZZ$ be fixed. If $X^{\bullet}$ and $Z^{\bullet}$ are acyclic in degree $n$, then the same is true for $Y^{\bullet}$.
\end{lem}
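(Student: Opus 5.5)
The plan is to reduce to a standard mapping cone, observe that acyclicity in degree $n$ only requires a certain map to be a deflation, and then factor a deflation through that map using \hypref{R1}, \hypref{R2} and \hypref{R3}.

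\emph{Reduction to a cone.} Every distinguished triangle in $\K(\mathcal{A})$ is isomorphic to a standard cone triangle. Rotating, $Y^{\bullet}$ is isomorphic in $\K(\mathcal{A})$ to $\cone(u)$ for $u:=-\Sigma^{-1}h\colon \Sigma^{-1}Z^{\bullet}\rightarrow X^{\bullet}$. By Lemma \ref{EQUIV-LEM}, acyclicity in degree $n$ is invariant under isomorphism in $\K(\mathcal{A})$. So I may assume $Y^k=Z^k\oplus X^k$ with differential $d_Y^k=\bigl[\begin{smallmatrix}d_Z^k&0\\ u^k&d_X^k\end{smallmatrix}\bigr]$, up to the sign conventions of the cone. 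Here the chain map condition reads $u^{k+1}d_Z^k=-d_X^{k}u^k$, with the signs adjusted accordingly.

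\emph{Reformulation.} Let $p_Y^{n-1}\colon Y^{n-1}\rightarrow\ker d_Y^n$ be the corestriction. Since $\ker d_Y^n\rightarrow Y^n$ is monic, $\ker p_Y^{n-1}=\ker d_Y^{n-1}=i_Y^{n-2}$. A deflation forms a conflation with its kernel, and $\CC$ is closed under isomorphisms. Hence $Y^{\bullet}$ is acyclic in degree $n$ iff $p_Y^{n-1}$ is a deflation. The same holds for $X^{\bullet}$ and $Z^{\bullet}$, so $p_X^{n-1}$ and $p_Z^{n-1}$ are deflations by hypothesis.

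\emph{Factoring a deflation through $p_Y^{n-1}$.} Since $p_Y^{n-1}$ has a kernel, \hypref{R3} reduces the claim to finding $e\colon Q\rightarrow Y^{n-1}$ with $p_Y^{n-1}e$ a deflation. I build $Q$ in two pullback steps, mimicking the element-wise argument ``given $(z,x)\in\ker d_Y^n$, lift $z$, correct, then lift the error in $X$''.

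\begin{itemize}
\item[(a)] The projection $Y^n\rightarrow Z^n$ induces $\pi\colon\ker d_Y^n\rightarrow\ker d_Z^n$. Let $P$ be the pullback of the deflation $p_Z^{n-1}$ along $\pi$. By \hypref{R2} the map $a\colon P\rightarrow\ker d_Y^n$ is a deflation; let $b\colon P\rightarrow Z^{n-1}$ be the other leg.
\item[(b)] Consider $k_Y^n a-p_Y^{n-1}[\begin{smallmatrix}1\\0\end{smallmatrix}]b\colon P\rightarrow Y^n$, where $k_Y^n\colon\ker d_Y^n\rightarrow Y^n$ is the kernel inclusion. Its $Z^n$-component vanishes because $d_Z^{n-1}b=k_Z^n\pi a$. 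Its $X^n$-component $c$ satisfies $d_X^nc=0$, using the chain map identity for $u$, so it factors as $c=k_X^n c'$ with $c'\colon P\rightarrow\ker d_X^n$.
\item[(c)] Let $Q$ be the pullback of the deflation $p_X^{n-1}$ along $c'$, with legs $a'\colon Q\rightarrow P$ (a deflation by \hypref{R2}) and $b'\colon Q\rightarrow X^{n-1}$.
\item[(d)] Put $e:=[\begin{smallmatrix}1\\0\end{smallmatrix}]ba'+[\begin{smallmatrix}0\\1\end{smallmatrix}]b'\colon Q\rightarrow Y^{n-1}$. Composing with the monic $k_Y^n$, one checks $p_Y^{n-1}e=a a'$ componentwise. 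This composite is a deflation by \hypref{R1}, and \hypref{R3} then finishes the proof.
\end{itemize}

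The main obstacle is expected to be the bookkeeping in the last two steps. One has to verify that $c$ indeed lands in $\ker d_X^n$, and that $p_Y^{n-1}e=aa'$, with the correct signs from the cone and suspension conventions. Each check should only use the universal properties of the kernels and pullbacks together with the monicity of the kernel inclusions.
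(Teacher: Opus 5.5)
Your argument is correct and has the same skeleton as the paper's proof. You reduce via Lemma~\ref{EQUIV-LEM} to $Y^{\bullet}=\cone(-\Sigma^{-1}h)$, note that only $p_Y^{n-1}$ being a deflation matters, and pull back $p_Z^{n-1}$ along the induced map $\ker d_Y^{\hspace{1pt}n}\rightarrow\ker d_Z^{\hspace{1pt}n}$ (your $\pi$, the paper's $q$). The difference is that the paper computes this pullback explicitly. It verifies that $Z^{n-1}\oplus\ker d_X^{\hspace{1pt}n}$ is a pullback, with legs $[1\;0]$ and $[\delta_1\;\delta_2]$, where the latter is the factorization of $\bigl[\begin{smallmatrix}d_Z^{n-1}&0\\-h^{n-1}&i_X^{n-1}\end{smallmatrix}\bigr]$ through $\ker d_Y^{\hspace{1pt}n}$. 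With that choice your $c'$ is the projection $[0\;1]$. Your second pullback can be taken to be $Z^{n-1}\oplus X^{n-1}$ with $a'=\bigl[\begin{smallmatrix}1&0\\0&p_X^{n-1}\end{smallmatrix}\bigr]$, and then your $e$ is the identity. So the paper obtains the exact factorization $p_Y^{n-1}=[\delta_1\;\delta_2]\bigl[\begin{smallmatrix}1&0\\0&p_X^{n-1}\end{smallmatrix}\bigr]$ and finishes with \hypref{R1} and \hypref{R2} alone. Your route avoids checking a universal property by hand, at the price of an extra correction pullback and the cancellation axiom \hypref{R3}. That is legitimate here, since $(\mathcal{A},\CC)$ is strongly deflation-exact and $p_Y^{n-1}$ has a kernel. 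The paper's computation, by contrast, shows that this step does not need \hypref{R3}.

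There are two harmless slips. In (b) you mean $k_Y^na-d_Y^{\hspace{1pt}n-1}\bigl[\begin{smallmatrix}1\\0\end{smallmatrix}\bigr]b$, since $p_Y^{n-1}$ lands in $\ker d_Y^{\hspace{1pt}n}$. Also, the indices in your chain-map identity are off. You can bypass that identity entirely: $d_Y^{\hspace{1pt}n}$ kills this map and its $Z^n$-component vanishes, which gives $d_X^{\hspace{1pt}n}c=0$ directly.
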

\begin{proof} By rotation we get the distinguished triangle $\Sigma^{-1}Z^{\bullet}\xrightarrow{\scriptscriptstyle -\Sigma^{-1}h} X^{\bullet}\xrightarrow{\scriptscriptstyle f}Y^{\bullet}\xrightarrow{\scriptscriptstyle g}Z^{\bullet}$ for which $Y^{\bullet}\cong\cone(-\Sigma^{-1}h)$ holds in $\K(\mathcal{A})$. By Lemma \ref{EQUIV-LEM} it suffices to show that $\cone(-\Sigma^{-1}h)$ is acyclic in degree $n$. We therefore assume $Y^{\bullet}=\cone(-\Sigma^{-1}h)$ and get the commutative diagram
\begin{equation}\label{EQS-0}
\scalebox{0.96}{\begin{tikzcd}[
  ampersand replacement=\&,
  column sep=0pt,
  row sep=2.5em
]
|[inner xsep=0.12cm]|\Sigma^{-1}Z^{\bullet}
  \ar[d,swap, "-\Sigma^{-1}h"']
\&
|[inner xsep=0.12cm]|\cdots
  \ar[rr]
\&
|[inner xsep=0pt]|\hspace{0.47cm}
\&
Z^{n-2}
  \ar[rr, "-d_Z^{\hspace{1pt}n-2}"]
  \ar[d, "-h^{n-2}"']
\&
|[inner xsep=0pt]|\hspace{2.45cm}
\&
Z^{n-1}
  \ar[rr, "-d_Z^{\hspace{1pt}n-1}"]
  \ar[d, "-h^{n-1}"']
\&
|[inner xsep=0pt]|\hspace{1.65cm}
\&
Z^n
  \ar[rr]
  \ar[d, "-h^n"']
\&
|[inner xsep=0pt]|\hspace{0.47cm}
\&
\cdots
\\
|[inner xsep=0.12cm]|X^{\bullet}
  \ar[d,swap, "f"']
\&
|[inner xsep=0.12cm]|\cdots
  \ar[rr]
\&
|[inner xsep=0pt]|\hspace{0.47cm}
\&
X^{n-1}
  \ar[rr, "d_X^{\hspace{1pt}n-1}"]
  \ar[d]
\&
|[inner xsep=0pt]|\hspace{2.45cm}
\&
X^n
  \ar[rr, "d_X^{\hspace{1pt}n}"]
  \ar[d]
\&
|[inner xsep=0pt]|\hspace{1.65cm}
\&
X^{n+1}
  \ar[rr]
  \ar[d]
\&
|[inner xsep=0pt]|\hspace{0.47cm}
\&
\cdots
\\
|[inner xsep=0.12cm]|Y^{\bullet}
\&
|[inner xsep=0.12cm]|\cdots
  \ar[rr]
\&
|[inner xsep=0pt]|\hspace{0.47cm}
\&
Z^{n-1}\oplus X^{n-1}
  \ar[
    rr,
    "d_Y^{\hspace{1pt}n-1}",
    "{\left[
      \begin{smallmatrix}
        \phantom{-}d_Z^{\hspace{1pt}n-1} & 0 \\
        -h^{n-1} & d_X^{\hspace{1pt}n-1}
      \end{smallmatrix}
    \right]}"'
  ]
\&
|[inner xsep=0pt]|\hspace{2.45cm}
\&
Z^n\oplus X^n
  \ar[
    rr,
    "d_Y^{\hspace{1pt}n}",
    "{\left[
      \begin{smallmatrix}
        \phantom{-}d_Z^{\hspace{1pt}n} & 0 \\
        -h^n & d_X^{\hspace{1pt}n}
      \end{smallmatrix}
    \right]}"'
  ]
\&
|[inner xsep=0pt]|\hspace{1.65cm}
\&
Z^{n+1}\oplus X^{n+1}
  \ar[rr]
\&
|[inner xsep=0pt]|\hspace{0.47cm}
\&
\cdots
\end{tikzcd}}
\end{equation}
with $Z^{n-1}$ in degree $n$. Denote by $\bigl[\begin{smallmatrix}i_1\\i_2\end{smallmatrix}\bigr]=i_{Y}^{\hspace{0.5pt}n-1}\colon\ker d_Y^{\hspace{1pt}n}\hookrightarrow Z^{n}\oplus X^{n}$ the kernel of $d_Y^{\hspace{1pt}n}$ and by $i_X^{\hspace{0.5pt}n-1}\colon\ker d_X^{\hspace{1pt}n}\hookrightarrow X^n$ the kernel of $d_X^{\hspace{1pt}n}$. Using the diagram above we see that 
$$
\Bigl[\begin{smallmatrix}\phantom{-}d_Z^{\hspace{1pt}n-1}&0\\-h^{n-1}&i_X^{\hspace{0.5pt}n-1}\end{smallmatrix}\Bigr]\colon Z^{n-1}\oplus\ker d_{X}^{\hspace{1pt}n}\rightarrow Z^{n}\oplus X^n
$$
factors uniquely through the kernel $i_Y^{n-1}$ via a map $[\delta_1\;\delta_2]\colon Z^{n-1}\oplus\ker d_X^{\hspace{1pt}n}\rightarrow \ker d_Y^{\hspace{1pt}n}$. Explicitly calculating the latter as well as $d_Y^{\hspace{1pt}n}i_Y^{\hspace{0.5pt}n-1}=0$ gives the equations
\begin{equation}\label{EQS-1}
i_1\delta_1=d_Z^{\hspace{1pt}n-1},\; i_2\delta_1=-h^{n-1},\;i_1\delta_2=0,\;i_2\delta_2=i_X^{n-1},\;d_Z^{\hspace{1pt}n}i_1=0,\;d_X^{\hspace{1pt}n}i_2-h^ni_1=0.
\end{equation}
Considering the diagram
\[
\begin{tikzcd}[ampersand replacement=\&, row sep=20pt]
\ker d_Y^{\hspace{1pt}n}\ar[d,dashed,swap,"\exists!\:q"]\ar[r, "{\bigl[\begin{smallmatrix}i_1 \\ i_2\end{smallmatrix}\bigr]}"]\& Z^n\oplus X^n\ar[d,"{[1\;0]}"]\ar[r,"d_Y^{\hspace{1pt}n}"]\&Z^{n+1}\oplus X^{n+1}\ar[d,"{[1\;0]}"]\\
\ker d_Z^{\hspace{1pt}n}\ar[r,swap,"i_Z^{n-1}"]\&Z^n\ar[r,swap,"d_Z^{\hspace{1pt}n}"]\&Z^{n+1}
\end{tikzcd}
\]
we get a map $q$ with\vspace{-8pt}
\begin{equation}\label{EQS-2}
i_Z^{\hspace{0.5pt}n-1}q=i_1.
\end{equation}
Using \eqref{EQS-1} and \eqref{EQS-2} plus the fact that $i_Z^{\hspace{1pt}n-1}$ is monic we get the commutative diagram
\begin{equation}\label{PB-PRF}
\begin{tikzcd}[column sep = 33pt]
Z^{n-1}\oplus\ker d_X^{\hspace{1pt}n}\ar[r,"{[\delta_1\;\delta_2]}"]\ar[d,swap,"{[1\;0]}"]&\ker d_Y^{\hspace{1pt}n}\ar[d,"q"]\\
Z^{n-1}\ar[r,->>,swap,"p_Z^{n-1}"]&\ker d_Z^{\hspace{1pt}n}
\end{tikzcd}
\end{equation}
in which $p_Z^{n-1}$ is a deflation as $Z^{\bullet}$ is acyclic in degree $n$ by assumption. We claim that \eqref{PB-PRF} is a pullback. Given $t\colon T\rightarrow \ker d_Y^{\hspace{1pt}n}$ and $s\colon T\rightarrow Z^{n-1}$ with $qt=p_Z^{n-1}s$ we compute $d_X^{\hspace{1pt}n}i_2(t-\delta_1s)=0$ using \eqref{EQS-0}--\eqref{EQS-2}. The universal property of the kernel $i_X^{n-1}\colon\ker d_X^{\hspace{1pt}n}\hookrightarrow X^n$ thus yields a unique $\alpha\colon T\rightarrow\ker d_X^{\hspace{1pt}n}$ with $i_2(t-\delta_1s)=i_X^{n-1}\alpha$. Another computation shows that $\bigl[\begin{smallmatrix}s\\\alpha\end{smallmatrix}\bigr]\colon T\rightarrow Z^{n-1}\oplus\ker d_X^{\hspace{1pt}n}$ is the unique map with $[\delta_1\;\delta_2]\bigl[\begin{smallmatrix}s\\\alpha\end{smallmatrix}\bigr]=t$ and $[1\;0]\bigl[\begin{smallmatrix}s\\\alpha\end{smallmatrix}\bigr]=s$ which establishes that \eqref{PB-PRF} is a pullback. By \hypref{R2} the map $[\delta_1\;\delta_2]$ is thus a deflation, too. By assumption $p_X^{n-1}\colon X^{n-1}\twoheadrightarrow\ker d_X^{\hspace{1pt}n}$ is a deflation and, again by \hypref{R2}, so is its pullback
\begin{equation*}
\begin{tikzcd}[ampersand replacement=\&, column sep=45pt]
Z^{n-1}\oplus X^{n-1}\ar[r,->>,"{\bigl[\begin{smallmatrix}1&0\\0&p_X^{n-1}\end{smallmatrix}\bigr]}"]\ar[d,swap,"{[0\;1]}"]\&Z^{n-1}\oplus\ker d_X^{\hspace{1pt}n}\ar[d,"{[0\;1]}"]\\
X^{n-1}\ar[r,->>,swap,"p_X^{n-1}"]\&\ker d_X^{\hspace{1pt}n}.
\end{tikzcd}
\end{equation*}
By \hypref{R1} we obtain that $[\delta_1\;\delta_2]\bigl[\begin{smallmatrix}1&0\\0&p_X^{n-1}\end{smallmatrix}\bigr]=[\delta_1\;\delta_2p_X^{n-1}]$ is a deflation. Finally, we observe that the latter coincides with $p_Y^{n-1}$ and that $i_Y^{\hspace{0.5pt}n-2}$ is its kernel
\[
\begin{tikzcd}[ampersand replacement=\&, column sep=18pt, row sep =25pt]
\ker d_Y^{\hspace{1pt}n-1}\ar[rd,>->,"i_Y^{n-2}"] \& \& \& \\
\& Z^{n-1}\oplus X^{n-1}\ar[rd,->>, "p_Y^{n-1}", "{[\delta_1\;\delta_2p_X^{n-1}]}"']
\ar[
    rr,
    "{\left[
      \begin{smallmatrix}
        \phantom{-}d_Z^{\hspace{1pt}n-1} & 0 \\
        -h^{n-1} & d_X^{\hspace{1pt}n-1}
      \end{smallmatrix}
    \right]}",
    "d_Y^{\hspace{1pt}n-1}"'
  ]
\& \&Z^n\oplus X^n \& \\
\& \&\ker d_Y^{\hspace{1pt}n}\ar[ru,hook,swap,"i_Y^{\hspace{0.5pt}n-1}"] \&
\end{tikzcd}
\]
showing that $Y^{\bullet}=\cone(-\Sigma^{-1}h)$ is acyclic in degree $n$.
\end{proof}

We can now prove the assertion announced at the beginning of Section \ref{SEC-LRH}.

\begin{thm}\label{L-T-STRC}\cite[Prop 3.5]{HKRW} Let $(\mathcal{A},\CC)$ be a strongly deflation-exact category with kernels. Then the pair $(\Der^{\leqslant0}_{\LL}(\mathcal{A},\CC),\Der^{\geqslant 0}_{\LL}(\mathcal{A},\CC))$ as defined in \eqref{LEFT-T-S} is a t-structure on $\Der(\mathcal{A},\CC)$.
\end{thm}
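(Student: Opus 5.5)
The plan is to apply Proposition \ref{SCH-LEM} to the t-structure $(\K^{\leqslant0}_{\LL}(\mathcal{A}),\K^{\geqslant0}_{\LL}(\mathcal{A}))$ on $\mathcal{T}=\K(\mathcal{A})$ from Proposition \ref{K-T-STR}, with the thick subcategory $\mathcal{N}=\Ac(\mathcal{A},\CC)$ (Definition \ref{ACYCLIC}(ii)). Then I will identify the images $Q(\K^{\leqslant0}_{\LL})$ and $Q(\K^{\geqslant0}_{\LL})$ with the classes in \eqref{LEFT-T-S}.

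\textbf{Step 1: verify the criterion of Proposition \ref{SCH-LEM}.} Let $X_1\to X_0\to N\to\Sigma X_1$ be distinguished in $\K(\mathcal{A})$ with $X_1\in\K^{\geqslant1}_{\LL}$, $X_0\in\K^{\leqslant0}_{\LL}$ and $N$ acyclic. By Proposition \ref{K-T-STR}, $X_1$ is split-acyclic, hence $\CC$-acyclic, in every degree $n\leqslant0$, and $X_0$ is so in every degree $n\geqslant1$. $N$ is acyclic in every degree. Split kernel--cokernel pairs lie in $\CC$ by \hypref{R0}.

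Fix $n\geqslant1$. Rotate to $N[-1]\to X_1\to X_0\to N$. Since $N[-1]$ and $X_0$ are acyclic in degree $n$, Lemma \ref{NEW-LPOP} yields that $X_1$ is acyclic in degree $n$. For $n\leqslant0$, rotate to $X_1\to X_0\to N\to\Sigma X_1$. Since $X_1$ and $N$ are acyclic in degree $n$, Lemma \ref{NEW-PROP} gives that $X_0$ is acyclic in degree $n$.

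It remains to get the complementary degrees. For $X_1$ in degrees $n\leqslant0$ this already holds, so $X_1$ is acyclic in all degrees, i.e.\ $X_1\in\Ac(\mathcal{A},\CC)$. Then $X_0\cong\cone(\ldots)$ lies in a triangle with $X_1,N$ acyclic, so by thickness $X_0\in\Ac$ too. (Alternatively, use the rotation $X_0\to N\to\Sigma X_1\to\Sigma X_0$ and Lemma \ref{NEW-PROP} degreewise.) One point needs care: acyclicity in all degrees in the sense of Definition \ref{DFN-AC-N} must coincide with Definition \ref{ACYCLIC}(i). This is immediate by taking $Z^n=\ker d^{\hspace{1pt}n}$ and using the remark after Definition \ref{ACYCLIC}.

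\textbf{Step 2: identify the aisles.} Proposition \ref{SCH-LEM} now gives a t-structure $(Q(\K^{\leqslant0}_{\LL}),Q(\K^{\geqslant0}_{\LL}))$ on $\Der(\mathcal{A},\CC)$. I must show that $Q(\K^{\leqslant0}_{\LL})$ consists exactly of the complexes acyclic in all degrees $n\geqslant1$, and similarly for the other half.

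For ``$\subseteq$'': an object of $\Der$ isomorphic to $Q(C)$ with $C\in\K^{\leqslant0}_{\LL}$ is connected to $C$ by a roof of quasi-isomorphisms. Quasi-isomorphisms preserve degreewise acyclicity by Lemma \ref{NEW-PROP} applied to the cone triangle (both directions, via rotation), combined with Lemma \ref{EQUIV-LEM}.

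For ``$\supseteq$'': if $C$ is acyclic in degrees $\geqslant1$, then $\tau^{\geqslant1}_{\LL}C$ is acyclic in every degree. By Proposition \ref{TRIAG-K}, $\tau^{\leqslant0}_{\LL}C\to C$ is then a quasi-isomorphism, and $\tau^{\leqslant0}_{\LL}C\in\K^{\leqslant0}_{\LL}$. The case of $\Der^{\geqslant0}_{\LL}$ is dual, using $C\to\tau^{\geqslant0}_{\LL}C$. This also matches the descriptions via $\tau^{\geqslant1}_{\LL}C^{\bullet}\cong0$ in \eqref{LEFT-T-S}.

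\textbf{Main obstacle.} I expect the main difficulty to be the degree bookkeeping. For $\tau^{\geqslant1}_{\LL}C$, the conflation in degree $1$ involves $\ker d^{\hspace{1pt}0}\to C^0\to\ker d^{\hspace{1pt}1}$, and in degree $\leqslant0$ the truncation is trivially split-acyclic. These boundary degrees need to be checked by hand. For $\tau^{\leqslant0}_{\LL}$ the top term is $\ker d^{\hspace{1pt}0}$ itself, and degree $1$ needs the identity $\ker(p^{-1})=i^{-2}$. Apart from this, everything rests on Lemmas \ref{NEW-PROP} and \ref{EQUIV-LEM}.
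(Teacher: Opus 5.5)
Your proposal is correct and follows the paper's proof. Step 1 is the paper's verification of the criterion of Proposition \ref{SCH-LEM}; the paper uses only the degree $n\leqslant0$ argument via Lemma \ref{NEW-PROP} for $X_0^{\bullet}$ and then closure of $\Ac(\mathcal{A},\CC)$ under triangles for $X_1^{\bullet}$, so your extra rotation for $n\geqslant1$ is harmless but redundant. Your Step 2 identifies $Q(\K^{\leqslant0}_{\LL}(\mathcal{A}))$ and $Q(\K^{\geqslant0}_{\LL}(\mathcal{A}))$ with the classes in \eqref{LEFT-T-S}, using invariance of degreewise acyclicity under quasi-isomorphisms and the truncation triangles of Proposition \ref{TRIAG-K}; this correctly fills in what the paper leaves as straightforward computations.
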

\begin{proof} The equivalence of the two descriptions of $\Der^{\leqslant0}_{\LL}(\mathcal{A},\CC)$ and $\Der^{\geqslant 0}_{\LL}(\mathcal{A},\CC)$ can be checked by straightforward computations that we will omit here.

\smallskip

To establish the t-structure property, we apply Proposition \ref{SCH-LEM} to $(\K_{\LL}^{\leqslant0}(\mathcal{A}),\K_{\LL}^{\geqslant0}(\mathcal{A}))$. To this end let a distinguished triangle $X_1^{\bullet}\rightarrow X_0^{\bullet}\rightarrow N^{\bullet}\rightarrow \Sigma X_1^{\bullet}$ in $\K(\mathcal{A})$ be given with $X_1^{\bullet}\in\K^{\geqslant1}_{\LL}(\mathcal{A})=\Sigma^{-1}\K_{\LL}^{\geqslant0}(\mathcal{A})$, $X_0^{\bullet}\in\K^{\leqslant0}_{\LL}(\mathcal{A})$ and $N^{\bullet}\in\Ac(\mathcal{A},\CC)$. By Proposition \ref{K-T-STR} we get that $X_1^{\bullet}$ is acyclic in every degree $n\leqslant0$ and that $X_0^{\bullet}$ is acyclic in every degree $n\geqslant1$. Since $N^{\bullet}$ is acyclic in every degree, Lemma \ref{NEW-PROP} yields that $X_0^{\bullet}$ is also acyclic in every degree $n\leqslant0$, i.e., $X_0^{\bullet}\in\Ac(\mathcal{A},\CC)$. Since $\Ac(\mathcal{A},\CC)\subseteq\K(\mathcal{A})$ is a triangulated subcategory, cf.\ Definition \ref{ACYCLIC}(ii), $X_1^{\bullet}\in\Ac(\mathcal{A},\CC)$, too.
\end{proof}

\begin{rmk}\label{FIN-RMK} We conclude with the following pointers.\vspace{2pt}

\begin{myitemize}

\item[(i)] Lemma \ref{NEW-PROP} and Theorem \ref{L-T-STRC} are almost identical to \cite[Prop 1.2.14 and consequences noted after that]{S}, where however a quasiabelian category endowed with its maximal exact structure consisting of all kernel-cokernel pairs was considered. That these statements hold under somewhat weaker assumptions was noticed in \cite{SW11Trier} but never published\,---\,one-sided exact categories were not considered in \cite{SW11Trier}.

\vspace{3pt}

\item[(ii)] Lemma \ref{NEW-PROP} implies: Let $f\colon X^{\bullet}\rightarrow Y^{\bullet}$ be a morphism in $\C(\mathcal{A})$ where $(\mathcal{A},\CC)$ is a strongly deflation-exact category with kernels and let $n\in\ZZ$. If $X^{\bullet}$ is acyclic in degree $n$ and $Y^{\bullet}$ is acyclic in degree $n-1$, then $\cone f$ is acyclic in degree $n-1$.

\vspace{3pt}

 \item[(iii)] Employing the notion of acyclicity in a single degree as in \cite[Dfn 2.14]{HKRW}, the statement of \cite[Prop 3.4]{HKRW}, which is an analogue of (ii) above, is not correct. Its proof however is correct but shows a slightly different statement which nevertheless suffices to prove \cite[Prop 3.5]{HKRW} which is exactly Theorem \ref{L-T-STRC} above.\hfill\diam
\end{myitemize}
\end{rmk}

\smallskip

{\begin{center}
{\sc Declaration on the Use of AI}
\end{center}\nopagebreak{}
During the preparation of this article the authors used ChatGPT (versions Sol and Astra) to assist with literature searches, explore proof strategies, discuss and check mathematical arguments and calculations, and improve the exposition, grammar and spelling. The authors critically evaluated and revised all chatbot-made suggestions and take full responsibility for the mathematical results, proofs, references and final text.}

\smallskip

{\begin{center}
{\sc Acknowledgments}
\end{center}\nopagebreak{}
The authors thank Adam-Christiaan van Roosmalen for several discussions concerning the content of the appendix. They also thank Joachim Hilgert for discussions that inspired the results in Section  \ref{SEC:HCOM}.}


\end{document}